\documentclass{article}
\usepackage{amsmath}
\usepackage{amsfonts}
\usepackage{amsthm}

\def\congruent{\equiv}

\def\ord{{\rm ord}}
\def\lcm{{\rm lcm}}

\def\notmid{\not\vert\,}

\setlength{\textwidth}{6.5in} 
\setlength{\oddsidemargin}{0in} 
\setlength{\evensidemargin}{0in} 

\newtheorem{Theorem}{Theorem}
\newtheorem{Lemma}{Lemma}

\begin{document}

\begin{center}
{\large\bf 
Handling a large bound for a problem on the generalized Pillai equation $\pm r a^x \pm s b^y = c$. 
}

\bigskip

Reese Scott

Robert Styer (correspondence author), Dept. of Mathematical Sciences, Villanova University, 800 Lancaster Avenue, Villanova, PA  19085--1699, phone 610--519--4845, fax 610--519--6928, robert.styer@villanova.edu
\end{center}


revised 19 Dec 2011 

\bigskip

\begin{abstract}  
We consider $N$, the number of solutions $(x,y,u,v)$ to the equation $ (-1)^u r a^x + (-1)^v s b^y = c $
in nonnegative integers $x, y$ and integers $u, v \in \{0,1\}$, for given integers $a>1$, $b>1$, $c>0$, $r>0$ and $s>0$.  Previous work showed that there are nine essentially distinct $(a,b,c,r,s)$ for which $N \ge 4$, except possibly for cases in which the solutions have $r$, $a$, $x$, $s$, $b$, and $y$ each bounded by $8 \cdot 10^{14}$ or $2 \cdot 10^{15}$.  In this paper we show that there are no further cases with $N \ge 4$ within these bounds.  We note that $N = 3$ for an infinite number of $(a,b,c,r,s)$, even if we eliminate from consideration cases which are directly derived from other cases in one of several completely designated ways.  Our work differs from previous work in that we allow $x$ and $y$ to be zero and also allow choices of $(u,v)$ other than $(0,1)$.  
\end{abstract}   

\bigskip

\section{Introduction}
 
The problem of finding $N$, the number of solutions $(x,y,u,v)$ 
to the equation  
\begin{equation}  (-1)^u r a^x + (-1)^v s b^y = c \label{1}  \end{equation}
in nonnegative integers $x, y$ and integers $u, v \in \{0,1\}$,
for given integers $a>1$, $b>1$, $c>0$, $r>0$ and $s>0$, has been considered by many authors with various restrictions on the variables (\cite{Be},  \cite{HT}, \cite{Le}, \cite{Sc-St2}, \cite{Sc-St4}, \cite{Sh}).  See \cite{Be}, \cite{BBM}, \cite{W},  \cite{Sc-St4}, \cite[Section 2]{Sc-St6} for histories of the problem.  

In \cite{Sc-St4} we showed that $N > 3$ implies $\max(a,b,r,s,x,y) < 8 \cdot 10^{14}$ (or, in some cases, $2 \cdot 10^{15}$).  The purpose of this paper is to show that there are exactly nine essentially different cases with $N > 3$ within those bounds.  

To state our main result we need to summarize some definitions from \cite{Sc-St4}.  

We will refer to a {\it set of solutions} to (\ref{1}) which we write as 
$$(a,b,c,r,s;x_1, y_1, x_2, y_2, \dots, x_N, y_N)$$
and by which we mean the set of solutions $(x_1, y_1)$, $(x_2, y_2)$, \dots, $(x_N, y_N)$ to (\ref{1}), with $N > 2$, for given integers $a$, $b$, $c$, $r$, and $s$.  We say that two sets of solutions $(a,b,c,r,s;x_1, y_1, x_2, y_2, \dots, x_N, y_N)$ and $(A,B,C,R,S; X_1, Y_1, X_2, Y_2, \dots, X_N, Y_N)$ belong to the same {\it family} if $a$ and $A$ are both powers of the same integer, $b$ and $B$ are both powers of the same integer, there exists a positive rational number $k$ such that $kc=C$, and for every $i$ there exists a $j$ such that $k ra^{x_i} = R A^{X_j}$ and $ksb^{y_i} = S B^{Y_j}$, $1 \le i,j \le N$.  One can show \cite{Sc-St4} that each family contains a unique member $(a,b,c,r,s;x_1, y_1, x_2, y_2, \dots, x_N, y_N)$ with the following properties: $\gcd(r,s b)=\gcd(s, ra)=1$; $\min(x_1, x_2, \dots, x_N)=\min( y_1, y_2, \dots, y_N)=0$; and neither $a$ nor $b$ is a perfect power.  We say that a set of solutions with these properties is in {\it basic form}.   

The {\it associate} of a set of solutions $(a,b,c,r,s;x_1, y_1, x_2, y_2, \dots, x_N, y_N)$ is the set of solutions $(b,a,c,s,r; \allowbreak y_1, x_1, y_2, x_2, \dots, y_N, x_N)$.  

A {\it subset} of a set of solutions $(a,b,c,r,s;x_1, y_1, x_2, y_2, \dots, x_N, y_N)$ is a set of solutions with the same $(a,b,c,r,s)$ and all it pairs $(x,y)$ among the pairs $(x_i, y_i)$, $1 \le i \le N$.  Note that this subset may be (and, in our usage, usually is) the set of solutions $(a,b,c,r,s;x_1, y_1, x_2, y_2, \dots, x_N, y_N)$ itself.  

We are now ready to state the result of this paper:  

\begin{Theorem} 
\label{Theorem1}
Any set of solutions $(a,b,c,r,s; x_1, y_1, x_2, y_2, \dots, x_N, y_N)$ to (1) with $N > 3$ must be in the same family as a subset (or an associate of a subset) of one of the following: 
\begin{align*}
(3,2,1,1,2; 0,0,1,0,1,1,2,2) \\
(3,2,5,1,2; 0,1,1,0,1,2,2,1,3,4) \\
(3,2,7,1,2; 0,2,2,0,1,1,2,3)\\
(5,2,3,1,2; 0,0,0,1,1,0,1,2,3,6)\\
(5,3,2,1,1; 0,0,0,1,1,1,2,3)\\
(7,2,5,3,2; 0,0,0,2,1,3,3,9)\\
(6,2,8,1,7; 0,0,1,1,2,2,3,5)\\
(2,2,3,1,1; 0,1,0,2,1,0,2,0)\\
(2,2,4,3,1; 0,0,1,1,2,3,2,4)
\end{align*}
\end{Theorem}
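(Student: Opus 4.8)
The plan is to carry out the finite but very large search promised by \cite{Sc-St4} by reducing it to a manageable computation, exploiting the structure of equation (\ref{1}) rather than brute force. Since we already know (from \cite{Sc-St4}) that any hypothetical new set of solutions with $N>3$ has $r,a,x,s,b,y$ each bounded by $8\cdot 10^{14}$ (or $2\cdot 10^{15}$), and since any such set lies in the same family as one in basic form, I would first restrict attention to sets in basic form, so that $\gcd(r,sb)=\gcd(s,ra)=1$, $\min_i x_i=\min_i y_i=0$, and neither $a$ nor $b$ is a perfect power. The key reduction is this: $N>3$ means there are at least four solutions, and among any four pairs $(x_i,y_i)$ one can, after possibly passing to the associate, find two solutions sharing neither coordinate and extract strong congruence and size constraints. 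Concretely, two solutions $(x_1,y_1)$, $(x_2,y_2)$ with $x_1<x_2$ and $y_1<y_2$ (the generic configuration) give, by subtracting the two instances of (\ref{1}) with appropriate signs, a relation of the form $\pm ra^{x_1}(a^{x_2-x_1}\mp 1)=\pm sb^{y_1}(b^{y_2-y_1}\mp 1)$, which forces $a^{x_2-x_1}$ to divide a number close to $sb^{y_2}$ and symmetrically; combined with the coprimality conditions this pins down the $2$-adic, $3$-adic, etc. valuations very tightly.

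The main work is then an organized case analysis on the small parameters. First I would bound $\min(a,b)$: if both $a$ and $b$ exceed a modest bound, the multiplicative independence of $a^x$ and $b^y$ (via a lower bound for linear forms in two logarithms, as already used in \cite{Sc-St4}) together with $N\ge 4$ is contradictory, so WLOG $a$ is small, say $a\le$ some explicit bound, and after taking associates we may assume $a\le b$. For each small value of $a$ (and each admissible small $r$ coprime to the relevant quantities), the set of four solutions yields a system: two of the solutions determine $c$ and a congruence class for $b^{y}$, and the requirement that two further solutions exist in the same $(a,b,c,r,s)$ forces $b$ and the exponents into a sparse set. I would loop over $a$, then over the possible "gap" data $(x_2-x_1, y_2-y_1)$ for a chosen pair of solutions (these gaps are themselves bounded because $a^{x}\le 8\cdot10^{14}$), then solve the resulting exponential Diophantine equations in the remaining unknowns using $p$-adic and modular sieving; each branch either terminates with no solution or lands on one of the nine known families (or a subset/associate thereof). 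Throughout, the reductions "subset" and "associate" in the theorem statement let me discard configurations that merely repackage a known case.

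The hard part will be keeping the case analysis both exhaustive and humanly (or computationally) finite: the a priori bound $8\cdot10^{14}$ is far too large to iterate over $b$ directly, so everything hinges on showing that once $a$, $r$, $s$, and one pair of solutions are fixed, $b$ is confined to a list of length that is polynomial (or polylogarithmic) in the bound rather than exponential. This is exactly where the divisibility relation $a^{x_2-x_1}\mid (\text{something}\approx sb^{y_2})$, the coprimality hypotheses, and the existence of a \emph{third} and \emph{fourth} solution (giving a second, independent relation) must be combined: two independent relations typically over-determine $b$, leaving only finitely many candidates to check. I expect the bookkeeping for the boundary cases — solutions with $x=0$ or $y=0$, and the non-standard sign choices $(u,v)\ne(0,1)$, which are precisely the novelty of this paper relative to earlier treatments — to require separate, careful sub-cases, since there the "subtract two instances" trick degenerates (e.g.\ $a^0-1=0$) and must be replaced by direct inspection of $\pm r\pm sb^y=c$ type equations. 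Finally, I would cross-check the resulting nine families against the list of \cite{Sc-St4} and verify by direct substitution that each listed tuple genuinely has $N\ge4$ and that no listed solution has been dropped, completing the proof that the list is exactly complete within the stated bounds.
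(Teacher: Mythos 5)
Your outline has a genuine gap at its central step: the claim that if both $a$ and $b$ exceed a ``modest bound'' then $N\ge 4$ is already contradicted by a lower bound for linear forms in two logarithms, so that one may take $a$ small and loop over it. That reduction is not available. The bound inherited from \cite{Sc-St4} applies to $a$ and $b$ themselves and is $8\cdot 10^{14}$, not a modest constant, and linear forms in logarithms cannot do better here: there are infinite families of basic forms with three solutions in which \emph{both} bases are arbitrarily large (for instance (\ref{62}) and (\ref{63}), where $b=(a^{kd}+(-1)^{u+v})/(a^d+(-1)^u)$), so no archimedean argument alone rules out a fourth solution uniformly once $\min(a,b)$ is large. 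The paper instead splits each of the three orderings of \cite{Sc-St4} at a moderate computational threshold ($170000$, $1000$, $134000$), treats the small range by a search driven by explicit divisibility (e.g.\ $a^{x_2}\mid b^{y_3-y_2}\pm1<8\cdot10^{14}$, the bound $a^{x_3-x_2}\le Z$, and the $\sigma$-lemma), and then handles the regime where both bases are large by elementary inequalities such as (\ref{109})--(\ref{110}) and (\ref{115})--(\ref{122}), which force the exponent gaps $x_3-x_2$, $y_3-y_1$ down to $1$ or $2$ and pin the triple of solutions into one of the explicit infinite $N=3$ families. Your ``WLOG $a$ small'' step therefore fails, and with it the finiteness of your loop over $a$.

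The second missing idea is how to exclude a fourth solution once a triple is found, which is where most of the paper's work lies. Saying that two independent relations ``typically over-determine $b$'' is not a proof: for the infinite families (\ref{62}), (\ref{63}), (\ref{67}), (\ref{10a}) every modular or divisibility sieve of the kind you describe is satisfied for all large $b$, and the paper must argue separately that no member extends to $N=4$. This requires the divisibility lemma on exponent differences (Lemma~\ref{NewLemma6}), congruences modulo high powers of $a$ or $b$ combined with binomial expansions (forcing, e.g., $y_4=3+ja^2$ with $j=wa/6+2$, hence $y_4>8\cdot10^{14}$), and, for the finitely many numerical triples produced by the searches, LLL reduction, bootstrapping, or the $1000$-digit integrality test for $y_4$. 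Your proposal contains no mechanism playing this role, so even granting a finite list of candidate $(a,b,c,r,s)$ with three solutions, it would not establish that the nine listed sets are the only ones with $N>3$.
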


Note that there are an infinite number of cases with $N=3$, even if we consider only sets of solutions in basic form (see (\ref{62}) through (\ref{69}) in Section 2).

\section{Preliminary Results}  

If, for a given choice of $(a,b,c,r,s)$, (\ref{1}) has two solutions $(x_i, y_i)$ and $(x_j, y_j)$, $1 \le i,j \le N$, we have 
\begin{equation} r a^{\min{(x_i, x_j)}}(a^{|x_j - x_i|} + (-1)^\gamma ) = s b^{\min(y_i, y_j)} (b^{|y_j - y_i|} + (-1)^\delta )  \label{2} \end{equation}
where $\gamma, \delta \in \{0,1\}$.  

In the following lemma, we summarize some results which follow from the proof of Theorem 2 of \cite{Sc-St4}.  

\begin{Lemma} 
\label{SummaryLemma}
Any set of solutions violating Theorem 1 above must be in the same family as a basic form (or the associate of a basic form) which satisfies one of the following:
\begin{equation} (ra,sb)=1, \max(a,b,r,s, x, y) < 8 \cdot 10^{14}, 0 = x_1 < x_2 < x_3< \dots < x_N, 0 = y_1 < y_2 < y_3 < \dots < y_N, \label{19b} \end{equation}   
\begin{equation} (ra,sb)=1, \max(a,b,r,s, x, y) < 8 \cdot 10^{14}, 0 = x_1 < x_2 < x_3 < \dots < x_N, 0 = y_2 < y_1 < y_3 < \dots < y_N, \label{21b} \end{equation}
\begin{equation} (ra,sb)=1, \max(a,r,s, x, y) < 8 \cdot 10^{14}, 0 = x_1 < x_2 < x_3 < \dots < x_N, 0 = y_1 = y_2 < y_3 < \dots < y_N, \label{20b} \end{equation}      
\end{Lemma}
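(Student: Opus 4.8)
\emph{Proof strategy.} The plan is to read off the three reduced configurations from the proof of Theorem 2 of \cite{Sc-St4}, whose case analysis already examines the possible shapes of a set of solutions with $N>3$; the configurations (\ref{19b}), (\ref{21b}), (\ref{20b}) should be exactly those that remain once the nine families of Theorem \ref{Theorem1} are set aside. So the work is mostly organizational, apart from one substantive ingredient that I flag below.

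First I would take a set of solutions with $N>3$ that violates Theorem \ref{Theorem1} and invoke Theorem 2 of \cite{Sc-St4} to get $\max(a,b,r,s,x,y)<8\cdot10^{14}$ (the finitely many configurations for which \cite{Sc-St4} gives only the larger bound $2\cdot10^{15}$ carry extra restrictions and are absorbed into the cases below). Passing to the basic form of its family gives $\gcd(r,sb)=\gcd(s,ra)=1$, $\min_i x_i=\min_i y_i=0$, and $a,b$ not perfect powers; under the two coprimality conditions $\gcd(ra,sb)=\gcd(a,b)$, and since \cite{Sc-St4} shows that apart from the nine families a set of solutions with $N>3$ has $\gcd(a,b)=1$, we also get $(ra,sb)=1$. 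I would keep in reserve the option of replacing the basic form by its associate, which interchanges $(a,r,x)$ with $(b,s,y)$, in order to normalize the pattern of repeated exponents.

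Next I would order the solutions and determine the possible coincidences. Fixing $x=X$ in (\ref{1}) gives $sb^y=|c-(-1)^u ra^X|\in\{|c-ra^X|,\,c+ra^X\}$, a set of at most two positive integers, and $sb^y$ determines $y$; so at most two solutions share a given $x$-value, and symmetrically at most two share a given $y$-value. A short argument from (\ref{2}), carried out in \cite{Sc-St4}, shows that if some $x$-value and some $y$-value are both repeated, then the set of solutions lies in the family of $(2,2,3,1,1;0,1,0,2,1,0,2,0)$, which is already in Theorem \ref{Theorem1}. Hence at most one of the lists $(x_i)$, $(y_i)$ has a repetition; replacing the basic form by its associate if necessary, I may assume $(x_i)$ is repetition-free, and relabel so that $0=x_1<x_2<\dots<x_N$.

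The remaining and hardest step is to locate the $y_i$ under $0=x_1<\dots<x_N$, and here I would lean directly on \cite{Sc-St4}: the proof of its Theorem 2 yields, for $N>3$, a ``tail monotonicity'' statement --- $y_1<y_3<y_4<\dots<y_N$ together with $y_2<y_3$ --- so that any inversion relative to the $x$-ordering, or any repetition among the $y_i$, is confined to the indices $1$ and $2$. I expect this to be the real obstacle to reconstructing independently, since it rests on Pillai-type estimates: once $x$ is large, $c$ is negligible beside $ra^x$, so $ra^x\approx sb^y$ forces $y/x$ near $\log a/\log b$ and hence $x,y$ to grow together past a bounded initial segment, while a hypothetical late inversion $x_i<x_j$, $y_i>y_j$ with $3\le i<j$, combined with the other solutions, produces incompatible sign choices in (\ref{2}) and is excluded by lower bounds for $|ra^x-sb^y|$. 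Granting this, $y_3,\dots,y_N$ are strictly increasing and each exceeds both $y_1$ and $y_2$, hence each is positive; since $\min_i y_i=0$, the minimum occurs at index $1$ or $2$, leaving only $0=y_1<y_2<y_3<\dots<y_N$, which is (\ref{19b}); $0=y_2<y_1<y_3<\dots<y_N$, which is (\ref{21b}); and $0=y_1=y_2<y_3<\dots<y_N$, which is (\ref{20b}). Finally I would verify the bound in (\ref{20b}): the two solutions with $y=0$ give $ra^{x_1}(a^{x_2-x_1}-1)=2s$ by (\ref{2}), which bounds $a$, $r$, $x_1$, $x_2$, and hence $c=ra^{x_1}\pm s$, in terms of $s<8\cdot10^{14}$ regardless of which bound \cite{Sc-St4} attaches to $b$ --- which is why (\ref{20b}) is stated with a bound on $\max(a,r,s,x,y)$ only.
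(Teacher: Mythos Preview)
Your proposal is correct and follows the same approach as the paper: the paper gives no independent proof of this lemma at all, merely stating that it ``summarize[s] some results which follow from the proof of Theorem 2 of \cite{Sc-St4}.'' Your outline is a faithful expansion of what that extraction entails --- reduction to basic form, the coprimality $(a,b)=1$ from \cite{Sc-St4} (the paper later invokes this as Lemma~3 of \cite{Sc-St4}), the bound on repeated exponent values, the exceptional family $(2,2,3,1,1;\dots)$ when both coordinates repeat, and the tail monotonicity confining any disorder to indices $1,2$ --- and you are right to flag that last step as the substantive input imported from \cite{Sc-St4} rather than reproved here. Your closing remark explaining why $b$ is omitted from the bound in (\ref{20b}) is also on target and is implicitly used later in the paper.
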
 

We will also need the following two lemmas from \cite{Sc-St4}.  

\begin{Lemma} (Lemma 13 of \cite{Sc-St4})  
\label{ZBoundLemma}
Suppose $(ra,sb)=1$ and suppose (\ref{1}) has four solutions $(x_1, y_1)$, $(x_2, y_2)$, $(x_3, y_3)$, $(x_4, y_4)$ with $x_1 < x_2 < x_3 < x_4$.  Let $Z = \max(x_4, y_1, y_2, y_3, y_4)$.   Then
$$a^{x_3-x_2} \le Z, s \le Z+1. $$
\end{Lemma}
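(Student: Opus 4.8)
The plan is to deduce the second inequality from the first and to obtain $a^{x_3-x_2}\le Z$ by playing off the instances of (\ref{2}) that straddle the gap $[x_2,x_3]$ against the size of the fourth solution.

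For the reduction, apply (\ref{2}) to the two solutions $(x_2,y_2)$ and $(x_3,y_3)$. Since $x_2<x_3$, this gives, for some $\gamma,\delta\in\{0,1\}$,
\[
r a^{x_2}\bigl(a^{x_3-x_2}+(-1)^{\gamma}\bigr)=s\, b^{\min(y_2,y_3)}\bigl(b^{|y_3-y_2|}+(-1)^{\delta}\bigr).
\]
Because $(ra,sb)=1$ we have $\gcd(s,ra^{x_2})=1$, so $s\mid a^{x_3-x_2}+(-1)^{\gamma}$; since $a^{x_3-x_2}+(-1)^{\gamma}\ge a-1\ge1$, this yields $s\le a^{x_3-x_2}+1$, and combined with $a^{x_3-x_2}\le Z$ it gives $s\le Z+1$. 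Thus everything reduces to the bound $a^{x_3-x_2}\le Z$.

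For that bound I would argue by contradiction, assuming $a^{x_3-x_2}>Z$, so that $a^{x_3-x_2}$ exceeds $x_4$ and every $y_i$. Write out (\ref{2}) for the pairs $\{1,2\}$, $\{1,3\}$, $\{2,3\}$, $\{2,4\}$, $\{3,4\}$ (indices referring to solutions). Two elementary facts drive the argument. First, for any pair $i<j$, reducing the corresponding instance of (\ref{2}) modulo a prime power dividing $b$ and using $\gcd(b,ra)=1$ shows $b^{\min(y_i,y_j)}\mid a^{x_j-x_i}+(-1)^{\gamma}$, while comparing $a$-adic valuations on the two sides determines $v_a$ of the $b$-side exactly; together these bound $b^{\min(y_i,y_j)}$ and $a^{x_i}$ in terms of the gaps $x_j-x_i$. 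Second, dividing the relation for a pair sharing the larger endpoint $x_3$ by that for its neighbor and reducing modulo $Q=a^{x_3-x_2}+(-1)^{\gamma}$, using $a^{x_3-x_2}\equiv-(-1)^{\gamma}\pmod Q$, forces $Q$ to divide a product of the form $\bigl((-1)^{\gamma'}-(-1)^{\gamma}a^{x_4-x_3}\bigr)\, b^{m}\bigl(b^{n}+(-1)^{\delta'}\bigr)$ with $m,n\le Z$ and $x_4-x_3\le Z$. When $Q$ happens to be coprime to $b$ and to the companion factor $b^{n}+(-1)^{\delta'}$, this divisibility already contradicts $a^{x_3-x_2}>Z$, since the remaining cofactor is too small; the rest of the work is to combine these divisibilities with the equation for the solution of exponent $x_4$ to close every remaining configuration.

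The main obstacle is exactly this coprimality breakdown: for small $a$ and $b$ (e.g.\ $(a,b)\in\{(3,2),(2,3),(5,2),\dots\}$) the integer $a^{x_3-x_2}+(-1)^{\gamma}$ can share factors with $b$ and with the $b^{n}+(-1)^{\delta'}$ terms, so the clean divisibility conclusion fails and one is left with a finite but delicate case analysis over small $a,b$, alongside the separate degenerate situations in which two of the $y_i$ coincide, where the $b$-side of (\ref{2}) collapses to $2s b^{y}$. This is Lemma~13 of \cite{Sc-St4}; the proof there carries out precisely this case analysis, drawing on earlier lemmas of that paper bounding the $b$-part of a solution, and a self-contained proof would reproduce that structure.
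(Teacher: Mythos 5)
First, a point of comparison: the paper does not prove this statement at all --- it is imported verbatim as Lemma 13 of \cite{Sc-St4} and used as a black box --- so there is no in-paper argument to match. Your reduction of the second inequality to the first is correct and complete: from (\ref{2}) with $(i,j)=(2,3)$, the hypothesis $(ra,sb)=1$ gives $s \mid a^{x_3-x_2}+(-1)^{\gamma}$, this divisor is a positive integer at most $a^{x_3-x_2}+1$, and hence $s\le Z+1$ once $a^{x_3-x_2}\le Z$ is known.

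The core inequality $a^{x_3-x_2}\le Z$ is not established, and the mechanism you lean on cannot deliver it. Reducing the quotient of the $(2,3)$- and $(2,4)$-instances of (\ref{2}) modulo $Q=a^{x_3-x_2}+(-1)^{\gamma}$ does yield $Q \mid b^{m}\bigl(b^{n}+(-1)^{\delta'}\bigr)\bigl((-1)^{\gamma'}-(-1)^{\gamma}a^{x_4-x_3}\bigr)$, but in your ``clean'' coprime case the surviving cofactor is $(-1)^{\gamma'}-(-1)^{\gamma}a^{x_4-x_3}$, whose absolute value is about $a^{x_4-x_3}$: this is exponential in the gap $x_4-x_3$ and is in no way bounded by $Z=\max(x_4,y_1,\dots,y_4)$. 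So ``the remaining cofactor is too small'' is false, and no contradiction with $a^{x_3-x_2}>Z$ follows even in the favorable case. A bound of the claimed strength --- a power of $a$ dominated by the exponents themselves --- essentially forces one to show that $a^{x_3-x_2}$ \emph{divides} a difference of exponents. The natural route is through the divisibilities $a^{x_2}\mid b^{|y_3-y_2|}+(-1)^{\delta}$ and $a^{x_3}\mid b^{|y_4-y_3|}+(-1)^{\delta'}$ coming from (\ref{2}), combined with a lifting-the-exponent/multiplicative-order analysis of $b^{t}\pm1$ (compare Lemma~\ref{SigmaLemma}), which produces $a^{x_3-x_2}\mid(\text{an exponent difference})\le Z$. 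Your sketch mentions comparing $a$-adic valuations but never extracts such a divisibility, and it explicitly defers the remaining case analysis to \cite{Sc-St4}; as written it is an incomplete outline whose pivotal step, in the form stated, fails.
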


\begin{Lemma} (Lemma 17 of \cite{Sc-St4})  
\label{SigmaLemma}
Let $a>1$ and $b>1$ be relatively prime integers.  For $1 \le i \le m$, let $p_i$ be one of the $m$ distinct prime divisors of $a$.  Let $p_i^{g_i} || b^{n_i} \pm 1$, where $n_i$ is the least positive integer for which there exists a positive integer $k$ such that $\vert b^{n_i} - k p_i \vert = 1$, and $\pm$ is read as the sign that maximizes $g_i$.  Write
$$\sigma = \sum_i g_i \log(p_i)/ \log(a).$$
Then, if 
$$a^x \mid b^y \pm 1, $$
where the $\pm$ sign is independent of the above, we must have 
$$a^{x} \mid  a^{\sigma} y.$$
\end{Lemma}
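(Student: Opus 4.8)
The plan is to reduce the lemma to a collection of elementary $p$-adic valuation estimates, one for each prime dividing $a$, and to derive each of these from the standard lifting-the-exponent principle (LTE). The first point is that, although $\sigma$ is defined transcendentally, $a^\sigma$ is in fact the integer $\prod_i p_i^{g_i}$, since $\log(a^\sigma) = \sigma \log a = \sum_i g_i \log p_i$. Write $a = \prod_i p_i^{e_i}$ with each $e_i \ge 1$, and let $v_p$ denote the $p$-adic valuation. Then $a^x \mid a^\sigma y$ is equivalent to $x e_i \le g_i + v_{p_i}(y)$ for every $i$; and since $a^x \mid b^y \pm 1$ gives $x e_i = v_{p_i}(a^x) \le v_{p_i}(b^y \pm 1)$, it suffices to prove, for each prime $p = p_i$ and each choice of sign,
\begin{equation*} v_p(b^y \pm 1) \le g_i + v_p(y) \end{equation*}
(the case $y = 0$ is trivial because the right side is then $0$ and $a^x \mid 0$; for $y \ge 1$ the case $x = 0$ is trivial, and when $x \ge 1$ and $p \mid a$ we automatically have $p \mid b^y \pm 1$, so $b^y \pm 1 \ne 0$).

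For odd $p$, let $d = \ord_p(b)$. I would first pin down $n_i$ and $g_i$. If $d$ is odd, then $b^n \equiv \pm 1 \pmod p$ first occurs at $n = d$ (a solution of $b^n \equiv -1$ would force $d \mid 2n$, hence $d \mid n$, hence $p \mid 2$), so $n_i = d$ and $g_i = v_p(b^d - 1)$; if $d = 2e$ is even, then $b^e \equiv -1 \pmod p$ is the first such occurrence, so $n_i = e$ and $g_i = v_p(b^e + 1)$. In either case $v_p(b^d - 1) = g_i$: directly when $d$ is odd, and via $v_p(b^{2e} - 1) = v_p(b^e - 1) + v_p(b^e + 1) = 0 + g_i$ when $d = 2e$. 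Now fix the sign in the hypothesis and assume $x \ge 1$, so $p \mid b^y \pm 1$. If $p \mid b^y - 1$, then $d \mid y$, and LTE applied to $(b^d)^{y/d} - 1$ (legitimate since $p$ is odd and $p \mid b^d - 1$) gives $v_p(b^y - 1) = v_p(b^d - 1) + v_p(y/d) = g_i + v_p(y)$, using $p \nmid d$ since $d \mid p - 1$. If instead $p \mid b^y + 1$, this forces $d$ even and $y \equiv e \pmod{2e}$, so $y = em$ with $m$ odd, and LTE for sums gives $v_p(b^y + 1) = v_p(b^e + 1) + v_p(m) = g_i + v_p(y)$. Thus the displayed inequality holds for odd $p$, in fact with equality.

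The case $p = 2$, which arises only when $2 \mid a$ (forcing $b$ odd), is the one needing separate attention, because there $v_2(b^{n_i} + 1)$ need not vanish: here $n_i = 1$ and $g_i = \max(v_2(b - 1), v_2(b + 1)) \ge 2$, with exactly one of $v_2(b - 1)$, $v_2(b + 1)$ equal to $1$. Using the $2$-adic LTE formulas ($v_2(b^y - 1) = v_2(b - 1) + v_2(b + 1) + v_2(y) - 1$ for even $y$, $v_2(b^y - 1) = v_2(b - 1)$ for odd $y$, $v_2(b^y + 1) = v_2(b + 1)$ for odd $y$, and $v_2(b^y + 1) = 1$ for even $y$), I would verify $v_2(b^y \pm 1) \le g_i + v_2(y)$ case by case; the only line that is not immediate is $v_2(b - 1) + v_2(b + 1) - 1 \le \max(v_2(b - 1), v_2(b + 1))$, i.e. $\min(v_2(b - 1), v_2(b + 1)) \le 1$, which holds by the remark just made. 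With $v_{p_i}(b^y \pm 1) \le g_i + v_{p_i}(y)$ established for every $i$, one concludes $a^x = \prod_i p_i^{x e_i} \mid \prod_i p_i^{g_i + v_{p_i}(y)} \mid \bigl(\prod_i p_i^{g_i}\bigr)\, y = a^\sigma y$. The main obstacle is not any single deep step but the bookkeeping: correctly relating $n_i$ and $g_i$ to $\ord_p(b)$, tracking the two independent $\pm$ signs, and handling the exceptional prime $2$; once the valuation inequality is set up properly, LTE does essentially all of the work.
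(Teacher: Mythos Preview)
Your proof is correct. The paper itself does not prove this lemma: it is quoted verbatim as Lemma~17 of \cite{Sc-St4} and used as a black box, so there is no in-paper argument to compare against. Your reduction to the prime-by-prime valuation inequality $v_{p_i}(b^y\pm 1)\le g_i+v_{p_i}(y)$, followed by a clean application of lifting-the-exponent (with the separate treatment of $p=2$ via the identity $\min(v_2(b-1),v_2(b+1))=1$), is exactly the natural elementary proof; the identification $a^\sigma=\prod_i p_i^{g_i}$ is the key observation that makes the transcendental-looking statement purely arithmetic.
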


Define $\sigma_a(b)$ to be the $\sigma$ of this lemma, and let $\sigma_b(a)$ be the $\sigma$ of this lemma with the roles of $a$ and $b$ reversed.

In the course of this paper, we will often need to show that a given set of three solutions does not have a fourth solution.  We eliminate the possibility of a fourth solution by one of three methods:  using the method known as \lq bootstrapping' (see  \cite{GLS} or \cite{St1}), using bounds derived from LLL basis reduction (see \cite{Sm}), or using $x_4$ to calculate $y_4$ and seeing if $y_4$ is an integer (or using $y_4$ to calculate $x_4$). 

The technique known as \lq bootstrapping' assumes one knows the values of $a$, $b$, $r$, $s$, $x_3$, and $y_3$.  For simplicity of exposition assume $\gamma=\delta=1$ (the other cases are only slightly more complicated) and consider (\ref{2}) with $(i,j)=(3,4)$:
$$ r a^{x_3} (a^{x_4-x_3} -1) = s b^{y_3} (b^{y_4-y_3} -1). $$  
Let $\ord(n,p)$ be the least positive integer such that $p \mid n^{\ord(n,p)} -1$.  Since $(ra,sb)=1$, for each prime $p \mid s b^{y_3}$, we have $\ord(a, p) \mid x_4-x_3$.  Let $x_0 = \lcm \{ \ord(a,p) : p \mid sb^{y_3} \}$ so $x_0 \mid x_4-x_3$.  Similarly, define $y_0 = \lcm \{ \ord(b,p) : p \mid r a^{x_3} \}$ so $y_0 \mid y_4-y_3$.  Now we begin the bootstrapping steps.  For each prime $p \mid a^{x_0} -1$ such that $p \notmid sb$, $\ord(b,p)$ must divide $y_4 - y_3$; setting $y_0 = \lcm( y_0, \{ \ord(b,p) : p \mid a^{x_0}-1 \})$, we have this new $y_0 \mid y_4 - y_3$.  For each prime $p \mid b^{y_0} -1$ such that $p \notmid ra$, $\ord(a,p) \mid x_4-x_3$; set $x_0 = \lcm( x_0,  \{ \ord(a,p) : p \mid b^{y_0} -1 \}) $, so this new $x_0 \mid x_4-x_3$.  We alternately use $x_0$ to find a larger $y_0$ if possible, the new $y_0$ to find a larger $x_0$ if possible, etc., continuing to bootstrap back and forth until $x_0$ or $y_0$ exceeds $8 \cdot 10^{14}$, in which case $x_4-x_3$ or $y_4-y_3$ must exceed this bound, contradicting Lemma~\ref{SummaryLemma}, so no fourth solution exists.  

For the LLL basis reduction algorithm, we follow the exposition in \cite{Sm}.  We have $\pm c = r a^{x_4} - s b^{y_4}$ so
$${ r a^{x_4} \over s b^{y_4} } = 1 \pm {c \over s b^{y_4} }$$
and thus
\begin{equation} \log\left( {r \over s} \right) + x_4 \log(a) - y_4 \log(b) = \log\left( 1 \pm { c \over s b^{y_4} } \right). \label{22} \end{equation}
Since $ \vert \log(1 \pm x) \vert < 2 x $ for $0 < x < 0.5$, one derives from (\ref{22}) that, if $ c/(sb^{y_4}) < 0.5$,  
$$\log\left( {r \over s} \right) + x_4 \log(a) - y_4 \log(b) < 2 { c \over s b^{y_4} } $$
so 
$$\log\left( {r \over s} \right) + x_4 \log(a) - y_4 \log(b) <  { 2 c \over s} e^{-\log(b) y_4} . $$  
Choosing $C= 10^{36}$, we set 
$$A = \left( 
\begin{array}{cc}
1 & [C \log(a)] \\
0 & [-C \log(b)]  
\end{array}
\right),
Y = \left( 
\begin{array}{cc}
 0 & \left[- C \log\left( { r \over s} \right) \right]
\end{array}
 \right),
$$
where we set $[X]$ to be the closest integer to $X$.  Note that rows in the computer algebra program Maple correspond to columns in \cite{Sm}.  Let $B$ be the LLL basis reduction of the rows of $A$, and let $b_1$ and $b_2$ be the rows of $B$.  Set $b_2^* = b_2 - {b_2 \cdot b_1 \over b_1 \cdot b_1} b_1$.  Further, define the vector $\sigma = Y B^{-1}$, define the number $\sigma_2 = \sigma[2]$, and let $\{\sigma_2\} = \sigma_2 - [\sigma_2]$ be the distance from $\sigma_2$ to the nearest integer.  In our context, Lemma VI.1 in \cite{Sm} becomes

\begin{Lemma} 
Let $S= \left( 8 \cdot 10^{14} \right)^2$, $T= 8 \cdot 10^{14} + 0.5$, $c_1 = \max(1, ||b_1||^2 / ||b_2^*||^2)$, $c_2 = {2 r \over s}$, $c_3 = \log(b)$, and $c_4 = c_1^{-1} \{\sigma_2\} ||b_1||^2$.  Assume $ c/(sb^{y_4}) < 0.79$.  If $c_4^2 > S + T^2$ then 
\begin{equation} y_4 \le { 1 \over c_3} \left( \log(C c_2) - \log\left( \sqrt{c_4^2 - S} - T \right) \right).  \label{25} \end{equation}
\end{Lemma}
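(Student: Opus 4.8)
The plan is to apply the standard inhomogeneous LLL reduction step of de~Weger (see~\cite{Sm}) to the Baker-type inequality for the linear form
\[ \Lambda := \log\left(\frac{r}{s}\right) + x_4\log(a) - y_4\log(b) \]
coming from~(\ref{22}). First note that in the present situation Lemma~\ref{SummaryLemma} lets us assume $x_4 < 8\cdot10^{14}$ and $y_4 < 8\cdot10^{14}$. Assuming $(x_4,y_4)$ is a fourth solution with $c/(sb^{y_4}) < 0.79$, the elementary inequality $|\log(1\pm u)| < 2u$, which holds for $0 < u < 0.79$, applied to~(\ref{22}) gives $|\Lambda| < c_2 e^{-c_3 y_4}$. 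Put $\mathbf{x} = (x_4,y_4)$ and consider the lattice point $P := \mathbf{x}A$, where $\Gamma$ is the lattice spanned by the rows of $A$, together with the vector $P - Y$.

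The argument rests on two estimates for $||P-Y||$. For the upper estimate one simply reads off coordinates: by the shape of $A$ the first coordinate of $P-Y$ is exactly $x_4$, whose square is less than $S = (8\cdot10^{14})^2$; and, writing $[C\log(a)] = C\log(a) + \epsilon_1$, $[-C\log(b)] = -C\log(b) + \epsilon_2$ and $[-C\log(r/s)] = -C\log(r/s) + \epsilon_3$ with each $|\epsilon_i| \le \frac12$, the second coordinate of $P-Y$ equals
\[ x_4[C\log(a)] + y_4[-C\log(b)] - [-C\log(r/s)] = C\Lambda + \theta, \qquad \theta := x_4\epsilon_1 + y_4\epsilon_2 - \epsilon_3, \]
so that $|\theta| \le \frac12(x_4 + y_4 + 1) < 8\cdot10^{14} + 0.5 = T$. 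Hence $||P-Y||^2 < S + (C|\Lambda| + T)^2$. For the lower estimate one expands $P-Y$ in the LLL-reduced basis as $P-Y = (m_1 - \sigma_1)b_1 + (m_2 - \sigma_2)b_2$ with $m_1, m_2 \in \mathbb{Z}$, passes to the Gram--Schmidt vectors (so that $b_1 \perp b_2^*$), and uses $|m_2 - \sigma_2| \ge \{\sigma_2\}$; this is the estimate of~\cite{Sm} giving the lower bound $||P-Y|| \ge c_4$ for the distance from $Y$ to $\Gamma$. The hypothesis $c_4^2 > S + T^2$ forces in particular $c_4 > 0$, i.e.\ $\{\sigma_2\} > 0$, which is what makes this lower bound meaningful.

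Combining the two estimates, $c_4^2 \le ||P-Y||^2 < S + (C|\Lambda| + T)^2$; since $c_4^2 > S + T^2$ this already forces $\Lambda \ne 0$ (so that the logarithm below is legitimate), and solving for $|\Lambda|$ gives $C|\Lambda| > \sqrt{c_4^2 - S} - T > 0$. Substituting $|\Lambda| < c_2 e^{-c_3 y_4}$ and taking logarithms yields $c_3 y_4 < \log(Cc_2) - \log\bigl(\sqrt{c_4^2 - S} - T\bigr)$, which is~(\ref{25}). I expect the only step requiring genuine care to be the rounding-error bookkeeping producing the precise constants $S$ and $T$ — it is essential here that Lemma~\ref{SummaryLemma} bounds \emph{both} $x_4$ and $y_4$ by $8\cdot10^{14}$ — together with the correct form of the de~Weger lower bound for $||P-Y||$ in terms of the LLL-reduced basis; past those points the argument is routine.
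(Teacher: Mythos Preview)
Your argument is correct and is exactly the approach the paper has in mind: the paper gives no proof of this lemma at all, simply stating that ``in our context, Lemma~VI.1 in~\cite{Sm} becomes'' the displayed statement, and you have spelled out precisely that specialization --- the upper bound on $\lVert P-Y\rVert$ from the rounding errors (using the bounds $x_4,y_4<8\cdot10^{14}$ supplied by Lemma~\ref{SummaryLemma}), the lower bound from the de~Weger/Smart inhomogeneous lattice estimate, and the comparison yielding~(\ref{25}).

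One small point worth flagging: applying $\lvert\log(1\pm u)\rvert<2u$ to~(\ref{22}) literally gives $\lvert\Lambda\rvert<\dfrac{2c}{s}\,e^{-c_3 y_4}$, not $\dfrac{2r}{s}\,e^{-c_3 y_4}$; indeed the paper's own derivation just above the lemma has $2c/s$. The definition $c_2=2r/s$ in the lemma statement is almost certainly a typographical slip for $c_2=2c/s$ (note the unfortunate clash between the Pillai constant $c$ and the LLL constants $c_1,\dots,c_4$). Your proof goes through verbatim with $c_2=2c/s$, which is what is actually used in the computations.
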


Given $a$, $b$, $c$, $r$, $s$, and verifying that $c/(s b^{y_4}) < 0.5$, we can often use this lemma to find that $\max(x_4,y_4) < \min(a,s-1)$, 
contradicting Lemma~\ref{ZBoundLemma}, and so we can conclude that no fourth solution exists.

Finally, the third useful method to eliminate the possibility of a fourth solution assumes we are given $a$, $b$, $r$, $s$, and either a potential $x_4$ or a potential $y_4$.    Suppose we have a bound $c < 10^{1000}$ and suppose $ y_4 \ge b \ge 1000$ so $b^{y_4} > 10^{3000}$.   
Using $\vert \log(1 \pm x) \vert < 2x$ for $x<0.5$, we see that (\ref{22}) implies 
\begin{equation*} \left\vert \log\left( {r \over s} \right) + x_4 \log(a) - y_4 \log(b) \right\vert < 10^{-1500}. \end{equation*}
So we have to more than 1000 places of accuracy,  
\begin{equation} y_4 = x_4 { \log(a) \over \log(b) } + { \log(r/s) \over \log(b) }  \label{solvey4eqn} \end{equation}
and 
\begin{equation} x_4 = y_4 { \log(b) \over \log(a) } + { \log(s/r) \over \log(a) }.  \label{solvex4eqn} \end{equation} 
If we know $a$, $b$, $r$, $s$, and $x_4$, we can calculate $y_4$ from (\ref{solvey4eqn}) and if this $y_4$ is not an integer to 1000 places of accuracy, then no fourth solution can exist.  Similarly, given $y_4$ we can calculate $x_4$ from (\ref{solvex4eqn}) and hope that $x_4$ is not an integer, thus showing no fourth solution can exist.

We will also need the following.  

\begin{Lemma} 
\label{NewLemma6}
Suppose (\ref{1}) has three solutions $(x_1, y_1, u_1, v_1)$, $(x_2, y_2, u_2, v_2)$, and $(x_3, y_3, u_3, v_3)$ and further assume that the following four conditions hold:

1.) \quad $x_1 < x_2 < x_3$ and $y_1 < y_2 < y_3$,

2.)  \quad $u_1 \ne v_1$,

3.)  \quad  any solution $(x,y)$ to (\ref{1}) such that $x > x_1$ and $y > y_1$ must also satisfy $x \ge x_2$ and $y \ge y_2$,

4.)  \quad  $R = {r a^{x_1} \over \gcd(r a^{x_1} , s b^{y_1})} > 2$ and $S = {s b^{y_1} \over \gcd(r a^{x_1} , s b^{y_1})} > 2$. 

Then $x_2 - x_1 \mid x_3 - x_1$ and $y_2 - y_1 \mid y_3 - y_1$.  
\end{Lemma}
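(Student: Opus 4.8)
The plan is to exploit condition 2, $u_1 \ne v_1$, to fix the sign pattern in the first solution, and then use conditions 1 and 3 to show that a suitable power of $a$ (resp.\ $b$) must divide both $x_3 - x_1$ and $x_2 - x_1$ (resp.\ the $y$-differences). First I would normalize: since $u_1 \ne v_1$, after possibly interchanging the roles suggested by the associate operation, write the first solution as $(-1)^{u_1} r a^{x_1} + (-1)^{v_1} s b^{y_1} = c$ with the two terms on the left of opposite sign, so that $c = |R' - S'|$ for $R' = r a^{x_1}$, $S' = s b^{y_1}$, and in fact (using condition 4 that $R = R'/\gcd > 2$ and $S = S'/\gcd > 2$, after dividing through by $g = \gcd(R',S')$) we get $c/g = |R - S|$ with $R, S > 2$ coprime. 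The point of condition 4 is exactly that neither $R$ nor $S$ is $1$ or $2$, so that $c/g \ge 1$ is genuinely smaller than both $R$ and $S$; this will force the other two solutions to have both exponents strictly larger, which is where condition 3 gets used to pin down exactly \emph{how much} larger.

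Next I would subtract the first equation from the second and from the third, as in (\ref{2}). Writing the second solution's contribution, by condition 1 we have $x_2 > x_1$ and $y_2 > y_1$, so from the two-solution identity
$$ r a^{x_1}\bigl(a^{x_2 - x_1} + (-1)^\gamma\bigr) = s b^{y_1}\bigl(b^{y_2 - y_1} + (-1)^\delta\bigr),$$
and since $(ra, sb)$ need not be $1$ but after dividing by $g$ the factors $R$ and $S$ are coprime, we can read off that $R \mid b^{y_2 - y_1} + (-1)^\delta$ and $S \mid a^{x_2 - x_1} + (-1)^{\gamma}$ (more precisely, $R a^{x_1 - ?}$ — one must track the residual powers of $a$ and $b$ carefully here, but the coprimality of $R$ and $S$ does the separation). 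The sign $\gamma$ is determined: since $u_1 \ne v_1$ and the first solution already used up the "$-$" between the two terms, the relation between solutions $1$ and $2$ is a difference of like-signed quantities once we account for the signs $u_2, v_2$; I would check the small case analysis that $\gamma = \delta = 1$ in the relevant configuration (the "$+$" cases would give $R \mid b^{y_2-y_1}+1$, which combined with condition 3's minimality still works, but $\gamma=\delta=1$ is the main branch). Then $\ord(a, p) \mid x_2 - x_1$ for every prime $p \mid S$ and $\ord(b,p) \mid y_2 - y_1$ for every $p \mid R$, and the same holds with $x_3 - x_1$, $y_3 - y_1$ in place of $x_2 - x_1$, $y_2 - y_1$.

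The crux is then to upgrade "$\ord$ divides both differences" to "$x_2 - x_1$ divides $x_3 - x_1$." Here I would argue that $x_2 - x_1$ is forced to equal $\lcm\{\ord(a,p) : p \mid S\}$ exactly (not merely a multiple of it): if $x_2 - x_1$ were a proper multiple of that lcm, then taking $x = x_1 + \lcm\{\ord(a,p):p\mid S\}$ would, together with a correspondingly small $y$, produce a solution to (\ref{1}) with $x_1 < x < x_2$, contradicting condition 3 — this is precisely what condition 3 is for. Once $x_2 - x_1 = \lcm\{\ord(a,p):p\mid S\}$ and $\lcm\{\ord(a,p):p\mid S\} \mid x_3 - x_1$, divisibility $x_2 - x_1 \mid x_3 - x_1$ follows immediately, and symmetrically for the $y$'s. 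The main obstacle I anticipate is the bookkeeping in the separation step: after dividing (\ref{2}) by $g = \gcd(ra^{x_1}, sb^{y_1})$, one must verify that the leftover powers of $a$ on the left and $b$ on the right genuinely are absorbed into $R$ and $S$ (i.e.\ that $\gcd(R, a) $ and $\gcd(S,b)$ behave), and that condition 3 can be invoked to rule out an intermediate solution with the correct sign pattern — the sign $(u, v)$ of the hypothetical intermediate solution is constrained by which of $\gamma, \delta$ occur, and one has to make sure no permissible sign choice sneaks a solution between $x_1$ and $x_2$. This is exactly the kind of finite sign-case check the paper flags as "only slightly more complicated."
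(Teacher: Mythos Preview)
Your approach has a genuine gap at the step you call ``the crux.'' You claim that if $x_2 - x_1$ were a proper multiple of $L = \lcm\{\ord(a,p) : p \mid S\}$, then $x = x_1 + L$ ``together with a correspondingly small $y$'' would yield a solution to (\ref{1}) strictly between the first and second. But you never produce that $y$. Knowing $S \mid a^L - 1$ only tells you that $R(a^L - 1)/S$ is an integer; there is no reason this integer should be of the form $b^{y-y_1} \pm 1$. The existence of an intermediate solution is precisely the nontrivial content of the lemma, and your sketch assumes it rather than proving it.

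The paper's argument is structurally different and supplies exactly this missing piece. Instead of the lcm of orders, it sets $g_1 = \gcd(x_2 - x_1, x_3 - x_1)$ and $g_2 = \gcd(y_2 - y_1, y_3 - y_1)$, writes $a^{g_1} + (-1)^\alpha = S l_1$ and $b^{g_2} + (-1)^\alpha = R l_2$, and then runs a two-sided divisibility argument: since $g_1$ divides both $x_2 - x_1$ and $x_3 - x_1$, the extra factor $l_1$ divides both ratios $t = (a^{x_2-x_1}+(-1)^{\gamma_2})/S$ and $T = (a^{x_3-x_1}+(-1)^{\gamma_3})/S$, hence $R l_1$ divides both $b^{y_2-y_1}+(-1)^{\gamma_2}$ and $b^{y_3-y_1}+(-1)^{\gamma_3}$, hence the order of $b$ modulo $R l_1$ divides $g_2$, hence $l_1 \mid l_2$. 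The symmetric argument gives $l_2 \mid l_1$, so $l_1 = l_2$, and now $R(a^{g_1}+(-1)^\alpha) = S(b^{g_2}+(-1)^\alpha)$ is an honest instance of (\ref{2}), furnishing a genuine solution $(x_1+g_1,\, y_1+g_2)$. Condition 3 then forces $g_1 = x_2 - x_1$, which is the desired divisibility. This back-and-forth between the $a$-side and $b$-side quotients is the idea your sketch is missing.

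A secondary issue: your treatment of signs is too casual. You write ``$\gamma = \delta = 1$ is the main branch,'' but in fact $u_1 \ne v_1$ forces $\gamma_i = \delta_i$ for each $i$ separately, while $\gamma_2$ and $\gamma_3$ may differ. When one of them is $0$ (the $+1$ case), the naive statement ``$\ord(a,p) \mid x_2 - x_1$'' can fail, and one must track $2$-adic valuations of the exponents to see that the relevant least exponent still divides the gcd; the paper handles this with its parameter $\alpha$ and the parenthetical remark about $2^n \,\|\, k$. Condition 4 ($R,S>2$) is used here, not to compare $c/g$ with $R$ and $S$ as you suggest, but to ensure these order arguments are not vacuous.
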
  

\begin{proof}  
Suppose we have three solutions to (\ref{1}) satisfying all four conditions of the lemma.  Considering (\ref{2}) with $(i,j)=(1,2)$ and $(1,3)$ we have 
$$R (a ^{x_2 - x_1} +(-1)^{\gamma_2} ) = S (b^{y_2 - y_1} + (-1)^{\delta_2})$$
and
$$R (a ^{x_3 - x_1} +(-1)^{\gamma_3}) = S (b^{y_3 - y_1} + (-1)^{\delta_3}).$$
Since $u_1 \ne v_1$, we must have $\gamma_2 = \delta_2$ and $\gamma_3 = \delta_3$.  Let $\alpha = 1$ if $\gamma_2 = \gamma_3 = 1$, otherwise let $\alpha=0$.  
Let
$$t = {a^{x_2-x_1} +(-1)^{\gamma_2} \over S } = {b^{y_2 - y_1} + (-1)^{\gamma_2} \over R }$$
and
$$T = {a^{x_3-x_1} +(-1)^{\gamma_3} \over S } = {b^{y_3 - y_1} +(-1)^{\gamma_3} \over R }.$$
Note that $t$ and $T$ are both integers.  

Let $g_1 = \gcd(x_2 - x_1, x_3-x_1)$ and $g_2 = \gcd(y_2-y_1, y_3-y_1)$.  
Let $k$ be the least integer such that $b^k +(-1)^{\alpha}$ is divisible by $R$.  Then $k$ must divide both $y_2-y_1$ and $y_3-y_1$, so that $k$ divides $g_2$, and 
$$b^{g_2} +(-1)^{\alpha} = R l_2$$
for some integer $l_2$. 
(Note that, when $\alpha=0$, $2^n || k$ implies $2^n || y_2 - y_1$ when $\gamma_2 = 0$ and $2^{n+1} \mid y_2 - y_1$ when $\gamma_2 = 1$, similarly for $y_3 - y_1$,  so that, since $\min(\gamma_2, \gamma_3) = 0$, we have $2^n || g_2$.)    
Similarly, 
$$a^{g_1} +(-1)^\alpha = S l_1$$ 
for some integer $l_1$.  
Since $g_1$ divides both $x_2-x_1 $ and $x_3 -x_1$, $l_1$ divides $t$ and $T$.  
There must be an integer $j$ which is the least positive integer such that $b^j +(-1)^\alpha$ is divisible by $R l_1$, and $j$ must divide both $y_2-y_1$ and $y_3-y_1$, so that $j$ divides $g_2$.  Therefore, $l_1 | l_2$.

A similar argument with the roles of $a$ and $b$ reversed shows that $l_2 | l_1$, so that $l_1 = l_2$, and we have 
\begin{equation} r a^{x_1} (a^{g_1} +(-1)^{\alpha})= s b^{y_1} (b^{g_2} +(-1)^{\alpha}). \label{16} \end{equation}
(\ref{16}) shows that $(x_1+g_1, y_1+g_2)$ is a solution to (\ref{1}).  If $x_1+g_1 \ne x_2$, then, using Condition 3 in the formulation of the lemma, we see that we must have $x_1+g_1>x_2$, which is impossible by the definition of $g_1$.   So $x_1+g_1=x_2$ and, similarly, $y_1+g_2=y_2$. 
\end{proof}

When $N=3$, we find many sets of solutions.  Here we list several types of sets of solutions, each one of which generates an infinite number of basic forms (and therefore an infinite number of families) giving three solutions to (\ref{1}).  We list these sets of solutions in the form $(a,b,c,r,s;x_1, y_1, x_2, y_2, x_3, y_3)$:  

\begin{equation} (a, {a^{kd} + (-1)^{u+v} \over a^d + (-1)^{u}}, { a^d b + (-1)^{u+v+1} \over h}, {b+(-1)^v \over h}, {a^d+(-1)^{u} \over h}; 0, 1, d, 0, kd, 2) \label{62}  \end{equation}
where $a$ and $b = {a^{kd} + (-1)^{u+v} \over a^d + (-1)^{u}}$ are integers greater than 1, $d$ and $k$ are positive integers, $h=\gcd(a^d+(-1)^u, b+(-1)^v)$, and $u$ and $v$ are in the set $\{ 0,1 \}$.  When $u=0$, we take $k-v$ odd; when $(u,v) = (1,1)$, we take $a^d \le 3$.  When $a=d=2$ and $(u,v)=(1,1)$, we can take $k$ to be a half integer.  When $k=2$ and $u-v$ is odd, the same choice of $(a,b,r,s)$ as in (\ref{62}) gives the additional set of solutions 
\begin{equation}  (a, a^d +(-1)^v, {2 a^d + (-1)^v \over h} ,{a^d +(-1)^v 2 \over h}, {a^d + (-1)^{v+1} \over h}; 0,0,d,1,3d,3).  \label{63}  \end{equation}

Other sets of solutions can be constructed with specified values of $a$.  When $a=3$ we have 
\begin{equation}  (3, {3^g +(-1)^v \over 2}, {3^{g+1} + (-1)^{v}  \over 2^{2+v-\alpha} }, { 3(3^{g-1}+(-1)^{v}) \over 2^{2+v-\alpha}}, 2^{1-v+\alpha}; 0,1, 1,0,2g,3) \label{64}  \end{equation}
where $v \in \{0,1\}$, $g$ is a positive integer, $\alpha = 0$ when $2 \mid g-v$, $\alpha=1$ when $g$ is odd and $v=0$, and $\alpha = 2$ when $g$ is even and $v=1$.    

When $a=2$, we have 
\begin{equation}  (2, 2^g + (-1)^v, 2^{g} +(-1)^{v+1}, 2, 1; 0,1,g-1,0,g,1) \label{65}  \end{equation}
where $v \in \{0,1\}$ and $g$ is a positive integer.   

Also, it is easy to construct sets of solutions for which $x_1 = y_1 = y_2 = 0$.  For example, we have, for $a$ even and $x>0$, 
\begin{equation}  (a, 2 a^x \pm 1, a^x \pm 1, 2, a^x \mp 1; 0,0,x,0,2x,1). \label{66}   \end{equation}
More generally,
\begin{equation}  (a, b , { a^{x_2} +(-1)^{t} \over 2^{m} } , 2^{1-m}, { a^{x_2} +(-1)^{t+1} \over 2^m }; 0,0,x_2, 0, x_3, y_3), \label{67}  \end{equation}
with $b^{y_3} = {2a^{x_3} +(-1)^{t+w+1}a^{x_2} + (-1)^{w+1} \over  a^{x_2} + (-1)^{t+1} }$, where $x_2>0$, $x_3>0$, $x_2 \mid x_3$, and $a^{x_3} \congruent (-1)^w \bmod {a^{x_2} +(-1)^{t+1} \over 2^m}$, for $t \in \{ 0,1 \}$, $w \in \{0,1\}$, and $m=1$ or 0 according as $a$ is odd or even. 

We also find an infinite family for which $\gcd(a,b) > 1$:
\begin{equation} (a,b,c,r,s; x_1, y_1, x_2, y_2, x_3, y_3) = (a, ta, {a (t+(-1)^{u+v+1}) \over h} , { ta + (-1)^{v} \over h}, { a +(-1)^{u} \over h}; 0,0,1,1,m+1,2) \label{68} \end{equation}
where $m \ge 0$ is an integer, $t = {a^m + (-1)^{v} \over a+(-1)^u}$ is an integer, $h= \gcd( ta + (-1)^v, a+(-1)^u)$, and $u$ and $v$ are in the set $\{ 0,1 \}$. 
Closely related to (\ref{68}) is the following:
\begin{equation} (2, 4t, {4t+4 \over h_1} , { 4t+1 \over h_1}, { 3 \over h_1}; 0,0,2,1,m_1+2,2) \label{69} \end{equation} 
where $m_1 \ge -1$ is an odd integer, $t = {2^{m_1} +1 \over 3}$, $h_1 = 3$ or 1 according as $m_1 \congruent 5 \bmod 6$ or not, and $u,v \in \{0,1\}$.

The Maple\texttrademark\ worksheets with the calculations for the following sections can be found at \cite{St2}.

\section{ Case (\ref{19b}) $0 = x_1 < x_2 < x_3 < x_4$, $0 = y_1 < y_2 < y_3 < y_4$}    

\begin{Lemma}  
\label{Lemma5}
Suppose $a$ or $b \le 170000$.  Then (\ref{1}) satisfying conditions (\ref{19b}) has at most three solutions.   
\end{Lemma}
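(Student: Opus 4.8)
\medskip
\noindent\textbf{Proof proposal.}
The plan is as follows. Since replacing a set of solutions by its associate preserves (\ref{19b}) while interchanging $a$ and $b$, I would first reduce to the case $a\le 170000$. Then, assuming for contradiction that (\ref{1}) has four solutions $(x_i,y_i)$, $1\le i\le 4$, satisfying (\ref{19b}) --- so $0=x_1<x_2<x_3<x_4$ and $0=y_1<y_2<y_3<y_4$ --- I would record two facts. First, $\gcd(ra,sb)=1$ forces $\gcd(ra^{x_i},sb^{y_i})=1$ for every $i$, since the primes dividing $ra^{x_i}$ lie among those of $r$ and $a$, those dividing $sb^{y_i}$ among those of $s$ and $b$, and these two sets are disjoint. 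Second, applying Lemma~\ref{ZBoundLemma} both to the given four solutions and to their associate (the lemma imposes no upper bound on $a$ or $b$) yields
\[ a^{x_3-x_2}\le Z,\qquad b^{y_3-y_2}\le Z,\qquad r\le Z+1,\qquad s\le Z+1,\qquad Z:=\max(x_4,y_4)<8\cdot10^{14}. \]

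Next I would extract the crucial divisibilities. Taking (\ref{2}) with $(i,j)=(2,3)$ gives
\[ ra^{x_2}\bigl(a^{x_3-x_2}+(-1)^{\gamma}\bigr)=sb^{y_2}\bigl(b^{y_3-y_2}+(-1)^{\delta}\bigr), \]
in which neither parenthesis is zero (as $x_3>x_2$ and $y_3>y_2$), so the coprimality just noted forces $sb^{y_2}\mid a^{x_3-x_2}+(-1)^{\gamma}$ and $ra^{x_2}\mid b^{y_3-y_2}+(-1)^{\delta}$; hence
\[ sb^{y_2}\le a^{x_3-x_2}+1\le Z+1<10^{15},\qquad ra^{x_2}\le b^{y_3-y_2}+1\le Z+1<10^{15}. \]
In particular $a^{x_2}\le Z+1$, so together with $a^{x_3-x_2}\le Z$ this gives $x_3<100$, and symmetrically $y_3$ is bounded; and since $a\le 170000$, the integers $a^{x_3-x_2}\pm1$ and $b^{y_3-y_2}\pm1$ all lie below $10^{15}$ and are therefore completely factorable.

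The heart of the argument would then be a finite, explicit search. For each $a\le 170000$, each $d_x=x_3-x_2\ge1$ with $a^{d_x}\le Z$, and each sign $\gamma$, I would factor $a^{d_x}+(-1)^{\gamma}$, let $sb^{y_2}$ range over its divisors, and split each such divisor into $s\cdot b^{y_2}$ with $b$ not a perfect power (basic form); this fixes $(s,b,y_2)$. With $b$ known I would run $d_y=y_3-y_2$ over all values with $b^{d_y}\le Z$ and each sign $\delta$, and test whether $ra^{x_2}:=sb^{y_2}\bigl(b^{d_y}+(-1)^{\delta}\bigr)/\bigl(a^{d_x}+(-1)^{\gamma}\bigr)$ is a positive integer $\le Z+1$; when it is, factoring it as $r\cdot a^{x_2}$ (with $a$ already known) recovers $(r,x_2)$, hence $x_3=x_2+d_x$ and $y_3=y_2+d_y$. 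I would then discard every candidate failing the basic-form and coprimality conditions, the bound $\max(a,b,r,s)<8\cdot10^{14}$, or the requirement that some single $c>0$ make all of $(0,0)$, $(x_2,y_2)$, $(x_3,y_3)$ solutions of (\ref{1}); the congruences obtained by reducing (\ref{2}) modulo $a$ (with the relevant exponents of primes dividing $b^{y}\pm1$ bounded via Lemma~\ref{SigmaLemma}) and, where its hypotheses apply, the divisibilities $x_2\mid x_3$ and $y_2\mid y_3$ from Lemma~\ref{NewLemma6}, would trim the list further. What should remain is a short, fully determined list of candidate configurations $(a,b,c,r,s;0,0,x_2,y_2,x_3,y_3)$.

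Finally, for each surviving candidate I would rule out a fourth solution $(x_4,y_4)$ by one of the three methods described above: bootstrapping the forced divisibilities $x_0\mid x_4-x_3$ and $y_0\mid y_4-y_3$ until $x_0$ or $y_0$ exceeds $8\cdot10^{14}$, contradicting (\ref{19b}); failing that, invoking the LLL-based lemma above to force $\max(x_4,y_4)<\min(a,s-1)$, contradicting Lemma~\ref{ZBoundLemma}; and, in any residual cases, computing $y_4$ from (\ref{solvey4eqn}) or $x_4$ from (\ref{solvex4eqn}) and checking that it is not an integer. Once every candidate is eliminated, the assumed fourth solution cannot exist, which proves the lemma. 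The main obstacle is not any single estimate but the scale of the bookkeeping: one must enumerate all sign patterns and all basic-form factorizations without omission, keep the candidate list small enough to handle in practice (which is exactly what the divisibilities $sb^{y_2}\mid a^{x_3-x_2}\pm1$, $ra^{x_2}\mid b^{y_3-y_2}\pm1$ and the order and $\sigma$ constraints accomplish), and then confirm that for \emph{each} survivor one of the three elimination methods genuinely succeeds --- in particular that bootstrapping reaches the bound rather than cycling, or that the LLL bound falls below $\min(a,s-1)$. These steps are laborious and case-heavy rather than conceptually deep, and are what the accompanying Maple computations would carry out.
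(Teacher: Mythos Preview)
Your proposal is correct and takes essentially the same approach as the paper: loop over the small base and the bounded exponent difference $x_3-x_2$ (resp.\ $y_3-y_2$), factor the corresponding $(\text{base})^{\text{difference}}\pm1$ to recover the other base and exponents via the divisibility coming from (\ref{2}) with $(i,j)=(2,3)$, reconstruct $r,s,c$, and then kill each surviving three-solution candidate by LLL or bootstrapping. The only organizational difference is that the paper extracts just the pure power $a^{x_2}$ as a divisor and then solves for $r$ and $s$ by an explicit formula, rather than enumerating all splittings of a divisor as $s\cdot b^{y_2}$; this is tidier but not a different idea.
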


\begin{proof}  
Suppose (\ref{19b}) holds.  By symmetry, we may assume $a>b$.  Further assume $a$ is not a perfect power.  Taking (\ref{2}) with $(i,j)=(2,3)$ we have $r a^{x_2} (a^{x_3-x_2} +(-1)^\gamma) = s b^{y_2} (b^{y_3-y_2} +(-1)^\delta)$.  Lemma~\ref{ZBoundLemma} shows $b^{y_3-y_2} \le Z$ so Lemma~\ref{SummaryLemma} shows $y_3 - y_2 < \log(8 \cdot 10^{14}) / \log(b)$.  For each choice of $b \le 170000$, $\delta \in \{0,1\}$, and $y_3 - y_2 < \log(8 \cdot 10^{14}) / \log(b)$, we see that $a^{x_2}$ is a factor of $b^{y_3-y_2} +(-1)^\delta < 8 \cdot 10^{14}$; this is small enough to factor easily, so we can list all factors $a^{x_2}$, hence we know all possible $a$ and $x_2$.  Lemma~\ref{ZBoundLemma} bounds $a^{x_3 - x_2} \le Z$.  For each choice of $x_3 - x_2 < \log(8 \cdot 10^{14}) / \log(a)$ and of $\gamma \in \{0,1\}$, we calculate the power of $b$ dividing $a^{x_3-x_2} +(-1)^\gamma$ which gives us the maximal possible value for $y_2$, call it $y_{2,max}$.  If $y_{2, max}>0$, for each $y_2$ with $1 \le y_2 \le y_{2,max}$ and $h = \gcd(a^{x_3-x_2} +(-1)^\gamma, b^{y_3-y_2} +(-1)^\delta)$, we can solve for $r = (b^{y_3-y_2} +(-1)^\delta)/(a^{x_2} h)$ and $s = (a^{x_3-x_2} +(-1)^\gamma)/(b^{y_2} h)$.  We have $y_3 = (y_3 - y_2) + y_2$ and $x_3 = (x_3 - x_2) + x_2$.  We see that $c = | r a^{x_2} - s b^{y_2}|$.  We now check if $r (a^{x_2} +(-1)^\alpha) = s (b^{y_2} +(-1)^\beta)$ for some $\alpha, \beta \in \{0,1\}$.   If so, we have values $a$, $b$, $c$, $r$, and  $s$ for which (\ref{1}) has at least three solutions.  

For each set of at least three solutions found for $b \le 170000$, we use the LLL method or bootstrapping to show that there is no fourth solution.
\end{proof}

\begin{Lemma}  
\label{Case63Lemma}
No instance of (\ref{63}) with $d=1$ has a fourth solution.  
\end{Lemma}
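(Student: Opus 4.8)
The plan is to specialize the family (\ref{63}) to the case $d=1$, obtaining an explicit parametrized set of three solutions depending only on $a$ and $v$, and then to show that each such set has no fourth solution. Setting $d=1$ in (\ref{63}), the base equation becomes $(-1)^{u_i} r a^{x_i} + (-1)^{v_i} s b^{y_i} = c$ with $b = a + (-1)^v$, $r = (a + 2(-1)^v)/h$, $s = (a + (-1)^{v+1})/h$, $c = (2a + (-1)^v)/h$, where $h = \gcd(a+(-1)^v, a+2(-1)^v)$, and the three solutions are $(x_1,y_1,x_2,y_2,x_3,y_3) = (0,0,1,1,3,3)$. Note $h \mid 1$ when $v = 1$ (since $\gcd(a-1,a-2)=1$) and $h \mid 3$ when $v=0$, so there are only a bounded number of residue subcases for $a$ to track.

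First I would handle the finitely many small $a$ (say $a$ below the threshold $170000$ of Lemma~\ref{Lemma5}, or more conservatively whatever threshold the bootstrapping/LLL code comfortably handles) by direct computation: for each such $a$ and each $v \in \{0,1\}$, compute $(a,b,c,r,s)$, verify the three listed solutions, and then apply bootstrapping or the LLL method (Lemma~4) to rule out a fourth solution, exactly as in the proof of Lemma~\ref{Lemma5}. Since $b = a+(-1)^v \le 170001$, this is within the range already shown to be tractable, so in fact Lemma~\ref{Lemma5} itself may dispose of the bulk of these at once once one checks $(ra,sb)=1$ and the ordering hypotheses of (\ref{19b}); the associate handles the case $a < b$. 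The remaining work is the tail $a$ large.

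For large $a$, I would argue that a fourth solution is impossible by a gap/size argument using Lemma~\ref{ZBoundLemma} together with Lemma~\ref{NewLemma6}. Because $r a^{x_1} = r$ and $s b^{y_1} = s$ are coprime-ish with $R = r/\gcd(r,s) > 2$ and $S = s/\gcd(r,s) > 2$ for $a$ large (both $r$ and $s$ grow like $a$), Lemma~\ref{NewLemma6} applies with the first three solutions $(0,0),(1,1),(3,3)$, giving $x_2 - x_1 = 1 \mid x_4 - x_1$ and $y_2 - y_1 = 1 \mid y_4 - y_1$ — which is automatic and hence gives no information directly; the real leverage is that Lemma~\ref{NewLemma6} (applied instead with the triple $(0,0),(1,1),(x_4,y_4)$, or with $(1,1),(3,3),(x_4,y_4)$) forces $x_3 - x_2 = 2 \mid x_4 - x_2$ and $y_3 - y_2 = 2 \mid y_4 - y_2$, so a fourth solution would have $x_4 \ge 5$, $y_4 \ge 5$. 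Then Lemma~\ref{ZBoundLemma} applied to the four solutions $x_1 < x_2 < x_3 < x_4$ gives $a^{x_3 - x_2} = a^2 \le Z$ and $s \le Z+1$ where $Z = \max(x_4, y_1, y_2, y_3, y_4)$; combined with $x_4 = y_4 \log(b)/\log(a) + O(1)$ from (\ref{solvey4eqn})–(\ref{solvex4eqn}) and the LLL bound of Lemma~4 (which for fixed small $(a,b,r,s)$ structure forces $\max(x_4,y_4) < \min(a, s-1)$), we contradict $a^2 \le Z = \max(x_4,y_4)$. The point is that $s \sim a$, so $a^2 \le Z+1 \le s$ is already false once $a$ exceeds a small constant.

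The main obstacle I anticipate is making the "LLL gives $\max(x_4,y_4) < \min(a,s-1)$" step uniform in $a$: the LLL lemma as stated is for a specific numerical $(a,b,r,s)$, whereas here these vary over an infinite family. I would get around this either (a) by re-running the linear-forms-in-logarithms estimate symbolically — since $\log(r/s) = \log\bigl(1 + (-1)^v/s\bigr) = O(1/a)$ and $\log(b) - \log(a) = O(1/a)$, the quantity $|\log(r/s) + x_4\log a - y_4 \log b|$ being tiny forces $x_4, y_4$ large relative to $a$ in a way that can be made explicit without the LLL machinery — or (b) by invoking a Baker-type lower bound for linear forms in three logarithms to get $x_4 \ll \log^2 Z$ or similar, which with $a^2 \le Z$ and $Z = \max(x_4,y_4)$ closes the case for all but finitely many $a$, those finitely many then handled computationally as above. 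Approach (a) is cleaner and I would attempt it first; the verification that $(ra,sb)=1$ (needed to invoke Lemmas~\ref{ZBoundLemma} and \ref{NewLemma6}) is a routine gcd computation modulo the small divisor $h$.
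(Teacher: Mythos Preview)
Your setup (specializing to $d=1$, invoking Lemma~\ref{Lemma5} for $a\le 170000$) is correct, but the large-$a$ argument does not close, and you have essentially flagged the gap yourself. From Lemma~\ref{ZBoundLemma} you only get $a^2 \le Z$ and $s \le Z+1$; neither contradicts the other, and your sentence ``$a^2 \le Z+1 \le s$'' has the last inequality pointing the wrong way. The contradiction you want requires an \emph{upper} bound $Z < a$ (or $Z < s$) uniform over the whole family, and the LLL lemma does not give this: it is a numerical tool for a fixed $(a,b,r,s)$. Your fallback~(b), a Baker-type bound, would at best yield $Z \ll (\log a)^{O(1)}$ with the usual enormous constants, leaving a computational range for $a$ far beyond $170000$; and fallback~(a) is not actually carried out. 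Also, your invocation of Lemma~\ref{NewLemma6} on the triple $(1,1),(3,3),(x_4,y_4)$ is premature: Conditions~(1) and~(3) of that lemma require first ruling out any solution with $x=2$ or $y=2$, which you have not done.

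The paper's proof takes a completely different, elementary route that avoids linear forms in logarithms for large $a$ altogether. The key observation is that $b=a\pm 1$, so one can work $a$-adically. For $v=0$ (so $b=a+1$), comparing the third and fourth solutions gives $(a-1)(b^{y_4-3}-1)\equiv 0 \pmod{(a+2)a^3}$; since $b=1+a$, the multiplicative order of $b$ modulo $a^3$ is $a^2$, forcing $a^2 \mid y_4-3$. Writing $y_4 = 3 + ja^2$ and expanding $(a+1)^{y_4}$ by the binomial theorem modulo $a^5$, the congruence $s b^{y_4}+c\equiv 0 \pmod{a^{100}}$ forces $j \ge 2 + a/6$, hence $y_4 > a^3/6 > 8\cdot 10^{14}$ for $a>170000$, contradicting Lemma~\ref{SummaryLemma}. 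The case $v=1$ reduces to $v=0$ by swapping $a$ and $b$. This $p$-adic lifting argument is the missing idea in your proposal.
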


\begin{proof}
By Lemma~\ref{Lemma5}, we only need to consider $a > 170000$.

Assume (\ref{1}) has three solutions satisfying (\ref{63}) with $d=1$.  If (\ref{1}) has a further solution $(x,y)$ with $y=2$, then 
$$ h r a^{x} = \pm h c \pm h s b^2 = \pm (2 a +(-1)^v ) \pm (a+(-1)^{v+1} ) (a + (-1)^v)^2 \not\equiv 0 \bmod a^2, $$
contradicting Lemma~\ref{SummaryLemma}, 
which requires $x \ge 2$.  So if $d=1$, any fourth solution $(x_4, y_4)$ to (\ref{63}) must satisfy $x_4 > 3$ and $y_4 > 3$.   

Lemma~\ref{ZBoundLemma} shows that $\max(x_4, y_4) \ge a \ge 170000$ while Lemma~\ref{SummaryLemma} shows that $ c  \le r + s < 16 \cdot 10^{14}$ so certainly $x_4 > 100$.  

When $d=1$ and $v=0$, we have $b=a+1$, $rh = a+ 2$, $sh = a -1$, and $ch = 2 a +  1$, where $h = \gcd(a+2, a-1)$.  
Considering the solution $(x_3, y_3)=(3,3)$, we get
\begin{equation}  (2a+1) + (a-1) b^3 = (a+2) a^3.  \label{MSA} \end{equation} 
Considering the solution $(x_4, y_4)$ we get
\begin{equation}  (2a+1) + (a-1) b^{y_4} \congruent 0 \bmod (a+2) a^{100}. \label{MSB} \end{equation}  
Combining (\ref{MSA}) and (\ref{MSB}) we find
\begin{equation} (a-1)(b^{y_4-3} -1) \congruent 0 \bmod (a+2) a^3,  \label{MSC} \end{equation}  
which requires $a^2 \mid y_4 - 3$ except possibly when $a \congruent 2 \bmod 4$.  If $a \congruent 2 \bmod 4$, let $2^g \mid \mid a+2 = b+1$ and let $a/2 = a_0$; then instead of (\ref{MSC}) we can use 
$$ (b^{y_4-3} -1) \congruent 0 \bmod 2^{g+3} a_0^3 $$
which again requires $a^2 \mid y_4 -3$.  So we can write $y_4  = 3 + j a^2$ for some integer $j > 0$.
For some integers $M_1$ and $M_2$ we have
\begin{align*}
&(2 a + 1) + (a-1) \Bigg( 1 + (3 + j a^2) a + \frac{(3+ j a^2)(3+ j a^2-1)}{ 2} a^2 \\
& \qquad + \frac{(3+ j a^2) ( 3+ j a^2 -1) (3+ j a^2 -2)}{ 6} a^3 \\
& \qquad + \frac{(3+ j a^2) ( 3+ j a^2 -1) (3+ j a^2 -2)(3+ j a^2 -3)}{ 24} a^4 + M_1 a^5 \Bigg) = M_2 a^{100}
\end{align*}
from which we derive, for some integer $M_3$, 
\begin{equation}  (2- j) a^3 + \left(1 - \frac{3}{2} j \right) a^4 =  \frac{M_3}{6} a^5 \label{MSD}  \end{equation}  
so that $j = {wa}/{6} + 2$ for some integer $w \ge 0$.  If $w=0$ then (\ref{MSD}) becomes impossible.  If $w>0$ then $y_4 \ge  3 + \left(\frac{a}{6} + 2 \right) a^2 > 8 \cdot 10^{14}$ when $a > 170000$, so we have a contradiction to Lemma~\ref{SummaryLemma}, showing the impossibility of a fourth solution.   

When $d=1$, $v=1$, we have $b=a-1$, which is equivalent to the previous case after reversing the roles of $a$ and $b$.  Thus, in every case, no instance of (\ref{63}) has a fourth solution.  
\end{proof}

\begin{Lemma} 
(\ref{1}) satisfying (\ref{19b}) has at most three solutions.    
\end{Lemma}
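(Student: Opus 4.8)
The plan is to argue by contradiction. Suppose a basic form satisfying (\ref{19b}) has a fourth solution; I will pin the exponents $x_2,x_3,y_2,y_3$ down to finitely many values and then show that none of the resulting one-parameter families of $(a,b,c,r,s)$ admits a fourth solution.

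First I would reduce to the generic situation. By Lemma~\ref{Lemma5}, and after passing to the associate if necessary (which interchanges the roles of $a,x,r$ and $b,y,s$ and preserves (\ref{19b})), we may assume $a>b>170000$, with neither $a$ nor $b$ a perfect power; note $a\ne b$ since $\gcd(a,b)=1$. Applying Lemma~\ref{ZBoundLemma} both to the solution set and to its associate gives $a^{x_3-x_2}\le Z<8\cdot10^{14}$ and $b^{y_3-y_2}\le Z'<8\cdot10^{14}$, so $x_3-x_2\le 2$ and $y_3-y_2\le 2$. Taking (\ref{2}) with $(i,j)=(2,3)$ and using $\gcd(ra,sb)=1$ gives $a^{x_2}\mid b^{y_3-y_2}+(-1)^{\delta}$ and $b^{y_2}\mid a^{x_3-x_2}+(-1)^{\gamma}$; since $a>b$, the first divisibility forces $a^{x_2}\le b^2+1<a^2$, whence $x_2=1$, and the second gives $b^{y_2}\le a^2+1$, which with $b>170000$ forces $y_2\le 5$.

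Next I would apply Lemma~\ref{NewLemma6} to the solutions $(0,0)$, $(x_2,y_2)$, $(x_3,y_3)$: its Condition 1 is (\ref{19b}), Condition 3 holds because in a basic form satisfying (\ref{19b}) the solution $(x_2,y_2)$ is the second smallest in both coordinates, and Conditions 2 and 4 (namely $c\ne r+s$ and $r,s>2$) hold apart from a short list of sporadic possibilities. In the generic case Lemma~\ref{NewLemma6} yields $x_2\mid x_3$ and $y_2\mid y_3$, which with $x_2=1$, $x_3-x_2\le2$, $y_3-y_2\le2$ leaves only $x_3\in\{2,3\}$ and $(y_2,y_3)\in\{(1,2),(1,3),(2,4)\}$; the sporadic cases ($r\le2$, $s\le2$, or $c=r+s$) are treated by the same method, using the bound $y_2\le5$ to keep the list of possible $(x_i,y_i)$ finite. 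Either way, $(x_1,y_1,x_2,y_2,x_3,y_3)$ now ranges over an explicit finite set.

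Finally, for each admissible sextuple $(x_1,y_1,x_2,y_2,x_3,y_3)$ I would combine the three equations coming from (\ref{1}) with the divisibility relations found above; these determine a (possibly empty) one-parameter family of $(a,b,c,r,s)$. I would then show that every nonempty such family has no fourth solution: one of them is (\ref{63}) with $d=1$, already handled by Lemma~\ref{Case63Lemma}, and for the others a putative fourth solution $(x_4,y_4)$ forces, via a lifting-the-exponent analysis (Lemma~\ref{SigmaLemma}) of the difference of the $x_3$- and $x_4$-equations carried out as in the proof of Lemma~\ref{Case63Lemma}, a divisibility making $y_4-y_3$ (or $x_4-x_3$) a multiple of a high enough power of $a$ (resp. $b$) that $\max(x_4,y_4)>8\cdot10^{14}$, contradicting (\ref{19b}); any finitely many sporadic quintuples $(a,b,c,r,s)$ that remain are dispatched by bootstrapping or the LLL-based bound. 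I expect this last step to be the main obstacle: organizing the case split, recognizing which one-parameter families actually arise (and which are empty), and checking that the uniform valuation argument — or, where it fails, a finite bootstrapping/LLL computation — really closes out every case.
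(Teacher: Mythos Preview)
Your outline is sound and would eventually succeed, but it diverges from the paper's argument at the crucial step and pays for it with a much larger case split. After reducing to $a>b>170000$, the paper does \emph{not} invoke Lemma~\ref{NewLemma6} at all. Instead it takes the ratio of (\ref{106a}) and (\ref{106b}) to obtain the tight two-sided estimate (\ref{109})--(\ref{110}), namely $|a^{x_3-x_2}-b^{y_3-y_2}|<\frac{2}{b-1}\max(a^{x_3-x_2},b^{y_3-y_2})$. Combined with the divisibility $a^{x_2}\mid b^{y_3-y_2}\pm1$ this forces $x_2\le x_3-x_2$, and a short mod-$b$ argument upgrades this to the strict inequality $x_2<x_3-x_2$; symmetrically $y_2<y_3-y_2$. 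Since $\max(x_3-x_2,y_3-y_2)=2$ (the case $x_3-x_2=y_3-y_2=1$ having been ruled out first), this pins things down uniquely to $(x_2,y_2,x_3,y_3)=(1,1,3,3)$. Only two sign cases remain: one is exactly (\ref{63}) with $d=1$, handled by Lemma~\ref{Case63Lemma}; the other forces $a=b+1$ together with divisibilities that are immediately impossible.

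Your route via Lemma~\ref{NewLemma6} is legitimate but buys you less: you must separately treat the sporadic cases $r\le2$, $s\le2$, $c=r+s$ (each of which is genuinely a one-parameter family, not a finite list), and even in the generic case you are left with $(x_3,y_2,y_3)\in\{2,3\}\times\{(1,2),(1,3),(2,4)\}$ and all sign choices, i.e.\ a dozen or more curves in the $(a,b)$-plane to parametrize and then rule out a fourth solution on each. This is doable, but the paper's ratio bound collapses the whole enumeration to a single family in two lines. If you want to salvage your write-up with minimal change, insert the size estimate $x_2<x_3-x_2$, $y_2<y_3-y_2$ from (\ref{107})--(\ref{110}) immediately after your bounds $x_3-x_2\le2$, $y_3-y_2\le2$; then Lemma~\ref{NewLemma6} and the sporadic analysis become unnecessary.
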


\begin{proof}
Using Lemma~\ref{Lemma5}, assume $a>b>170000$.  
Suppose (\ref{1}) satisfying (\ref{19b}) has four solutions with $0 = x_1 < x_2 < x_3 < x_4$ and $0 = y_1 < y_2 < y_3 < y_4$.  To fix notation, let 
$$c = -(-1)^\alpha r + (-1)^\beta s = (-1)^\gamma (r a^{x_2} - s b^{y_2}) =  (-1)^\delta (r a^{x_3} - s b^{y_3}) = (-1)^\epsilon (r a^{x_4} - s b^{y_4})$$ 
for some $\alpha, \beta, \gamma, \delta, \epsilon \in \{0,1\}$.
Considering (\ref{2}) with $(i,j)=(1,2)$ and $(i,j)=(2,3)$, we have 
\begin{equation} r(a^{x_2} +(-1)^{\alpha+\gamma}) = s(b^{y_2} + (-1)^{\beta+\gamma}) \label{106a}  \end{equation}
and 
\begin{equation} r a^{x_2} (a^{x_3-x_2} - (-1)^{\gamma+\delta}) = s b^{y_2} (b^{y_3 - y_2} - (-1)^{\gamma+\delta}). \label{106b} \end{equation}

Suppose $x_3-x_2 = y_3-y_2 = 1$.  Since $(ra,sb)=1$, from (\ref{106b}) we have $a^{x_2} \mid b - (-1)^{\gamma+\delta}$; therefore, since $a>b$, we have $x_2=1$, $(-1)^{\gamma+\delta} = -1$, and $a=b+1$.  Then $b^{y_2} \mid a -(-1)^{\gamma+\delta} = a+1 = b+2$ which is impossible for $b>2$.  Thus we have $\max(x_3-x_2, y_3-y_2) \ge 2$.  By Lemma~\ref{ZBoundLemma}, $\max(x_3-x_2, y_3- y_2) \le 2$ since $a>b>170000$.  Thus,
\begin{equation}  \max(x_3-x_2, y_3-y_2) = 2.  \label{XX}  \end{equation}

Taking the ratio of (\ref{106a}) and (\ref{106b}), 
\begin{equation} { a^{x_2} (a^{x_3-x_2} -(-1)^{\gamma+\delta}) \over a^{x_2}+(-1)^{\alpha+\gamma} } = { b^{y_2} (b^{y_3-y_2} -(-1)^{\gamma+\delta}) \over b^{y_2} +(-1)^{\beta +\gamma} }.  \label{107} \end{equation}
We rewrite this as 
$$ (a^{x_3-x_2} -(-1)^{\gamma+\delta}) \left( 1 - {(-1)^{\alpha+\gamma} \over a^{x_2} + (-1)^{\alpha+\gamma} } \right) =  (b^{y_3-y_2} -(-1)^{\gamma+\delta}) \left( 1 - { (-1)^{\beta+\gamma} \over b^{y_2} +(-1)^{\beta+\gamma} } \right).$$
From this we obtain
$$a^{x_3 - x_2} - b^{y_3 - y_2} = a^{x_3 - x_2} {(-1)^{\alpha+\gamma} \over a^{x_2} + (-1)^{\alpha+\gamma} } -  b^{y_3 - y_2} { (-1)^{\beta+\gamma} \over b^{y_2} +(-1)^{\beta+\gamma} } 
- {(-1)^{\alpha+\delta} \over a^{x_2} + (-1)^{\alpha+\gamma} }  + { (-1)^{\beta+\delta} \over b^{y_2} +(-1)^{\beta+\gamma} } $$
so
$$\vert a^{x_3 - x_2} - b^{y_3 - y_2} \vert \le   a^{x_3 - x_2} {1 \over a^{x_2} + (-1)^{\alpha+\gamma} } +  b^{y_3 - y_2} { 1 \over b^{y_2} +(-1)^{\beta+\gamma} } 
+ {1 \over a^{x_2} + (-1)^{\alpha+\gamma} }  + { 1 \over b^{y_2} +(-1)^{\beta+\gamma} }.  $$ 
Since $a^{x_2} +(-1)^{\alpha+\gamma} \ge b$ and $b^{y_2} +(-1)^{\beta+\gamma} \ge b-1$,  
$$\vert a^{x_3 - x_2} - b^{y_3 - y_2} \vert \le {a^{x_3-x_2} \over b} + {b^{y_3-y_2} \over b-1} + {1 \over b} + {1 \over b-1} $$
and since (\ref{XX}) gives  $\max(a^{x_3-x_2}, b^{y_3-y_2}) \ge b^2$,
$$\vert a^{x_3 - x_2} - b^{y_3 - y_2} \vert < \max(a^{x_3-x_2}, b^{y_3-y_2}) \left( {1 \over b} + {1 \over b-1} + {1 \over b^3} + {1 \over (b-1) b^2} \right).$$
 Therefore,  
$$\vert a^{x_3 - x_2} - b^{y_3 - y_2} \vert  <  { 2 \over b-1} \max \left( a^{x_3-x_2}, b^{y_3-y_2} \right)  $$
so that, since $b > 3$,
\begin{equation} \left( 1 - {2 \over b-1} \right) b^{y_3-y_2} < a^{x_3-x_2} < \left(1 - {2 \over b-1} \right)^{-1} b^{y_3-y_2}  \label{109} \end{equation}
and similarly 
\begin{equation}  \left( 1 - {2 \over b-1} \right) a^{x_3-x_2} < b^{y_3 - y_2} < \left( 1 - {2 \over b-1} \right)^{-1} a^{x_3-x_2}. \label{110}  \end{equation}

We now show $x_2 \le x_3 - x_2$.  From (\ref{106b}) and (\ref{110}) we have 
$$a^{x_2} \mid b^{y_3-y_2} - (-1)^{\gamma+\delta} < \left( 1 - {2 \over b-1} \right)^{-1} a^{x_3-x_2} + 1 <  a^{x_3-x_2+1},$$
so $x_2 \le x_3-x_2$.  Similarly, one can show that $y_2 \le y_3 - y_2$. 

We can further show $x_2 < x_3-x_2$.  Suppose $z=x_2 = x_3-x_2$.  Then (\ref{106b}) gives $a^z \mid b^{y_3-y_2} -(-1)^{\gamma+\delta}$ so (\ref{110}) shows
$$a^z \mid b^{y_3-y_2} -(-1)^{\gamma+\delta}  <  \left( 1 - {2 \over b-1} \right)^{-1} a^z +1 < 2 a^z.$$
Thus, $a^z = b^{y_3-y_2} -(-1)^{\gamma+\delta}$ and (\ref{107}) becomes 
$$(b^{y_3-y_2} - (-1)^{\gamma+\delta} 2) (b^{y_2} +(-1)^{\beta+\gamma}) = b^{y_2} (b^{y_3-y_2} +(-1)^{\alpha+\gamma} - (-1)^{\gamma+\delta})$$
which is impossible modulo $b$.  Thus, $1 \le x_2 < x_3 - x_2$; similar arguments show $1 \le y_2 < y_3-y_2$.  
 
Recalling (\ref{XX}) we find $x_2=y_2=1$ and $x_3 = y_3 = 3$.  If $(-1)^{\gamma+\delta} = 1$ then (\ref{107}) can be rewritten as $a (a -(-1)^{\alpha+\gamma}) = b (b - (-1)^{\beta+\gamma})$.  This implies $a=b -(-1)^{\beta+\gamma}$ and so $b = a - (-1)^{\alpha+\gamma}$, $\beta \ne \gamma$ and $\alpha = \gamma$, so $c = -(-1)^\alpha r + (-1)^\beta s > 0$ shows $\alpha=\gamma= 1$ and $\beta=0$.  Then $r=(a-2)/h$, $s=(a+1)/h$, and $c= (2a-1)/h$ where $h = \gcd(a-2, a+1) \le 3$.  
We see that the case under consideration in this paragraph satisfies (\ref{63}) with $d=v=1$.  By Lemma~\ref{Case63Lemma}, this cannot lead to a fourth solution.    

So $(-1)^{\gamma+\delta} = -1$, and (\ref{107}) becomes 
$$a (a^2 +1) (b +(-1)^{\beta+\gamma}) = b (b^2 +1) (a+(-1)^{\alpha+\gamma}).$$
Since $\gcd(a^2 +1, a \pm 1) \le 2$ and $\gcd(b^2+1, b \pm 1) \le 2$, we must have 
\begin{equation}  a +(-1)^{\alpha+\gamma} \mid 2 (b +(-1)^{\beta+\gamma}), b+(-1)^{\beta+\gamma} \mid 2(a+(-1)^{\alpha+\gamma}).  \label{110b} \end{equation}
Note that (\ref{109}) gives 
$$a < { b \over  \sqrt{ 1 - \frac{2}{b-1} } }  < b+2 $$
for $b \ge 6$ so $a=b+1$.  But then (\ref{110b}) is impossible.  
\end{proof}

\section{Case (\ref{21b}) $0 = x_1 < x_2 < x_3 < x_4$, $0 = y_2 < y_1 < y_3 < y_4$}  

We begin with some preliminaries.  To fix notation, let 
\begin{equation} c = (-1)^\alpha r + s b^{y_1}= r a^{x_2} +(-1)^\beta s = (-1)^\gamma (r a^{x_3} - s b^{y_3}) = (-1)^\delta (r a^{x_4} - s b^{y_4}) \label{110c} \end{equation}
for some $\alpha$, $\beta$, $\gamma$, and $\delta \in \{ 0,1\}$.  
  Applying (\ref{2}) with $(i,j)=(2,3)$, $(1,2)$, and $(1,3)$, we have 
\begin{equation}  r  ( a^{x_3} - (-1)^{\gamma} a^{x_2}) = s (b^{y_3} +(-1)^{\beta+\gamma}), \label{111a}  \end{equation}
\begin{equation}  r (a^{x_2} - (-1)^{\alpha} ) = s (b^{y_1} - (-1)^{\beta}),  \label{111b} \end{equation}
and
\begin{equation} r (a^{x_3} - (-1)^{\alpha+\gamma}) = s ( b^{y_3} +(-1)^\gamma b^{y_1} ).  \label{111c} \end{equation}

Taking the ratio of (\ref{111a}) and (\ref{111b}), we obtain
\begin{equation} {  a^{x_3} - (-1)^{\gamma} a^{x_2} \over a^{x_2} - (-1)^{\alpha} } = { b^{y_3} +(-1)^{\beta+\gamma} \over  b^{y_1} - (-1)^{\beta} } . \label{111}  \end{equation}
Similarly, the ratio of (\ref{111c}) with (\ref{111b}) gives
\begin{equation}  { a^{x_3}  - (-1)^{\alpha+\gamma} \over a^{x_2} - (-1)^{\alpha} } = { b^{y_3} +(-1)^{\gamma} b^{y_1}  \over b^{y_1} - (-1)^{\beta} }, \label{112}  \end{equation}
and considering (\ref{111a}) with (\ref{111c}), one gets 
\begin{equation}   { a^{x_3}  - (-1)^{\gamma} a^{x_2} \over a^{x_3} - (-1)^{\alpha+\gamma} } = { b^{y_3} +(-1)^{\beta+\gamma}  \over b^{y_3} + (-1)^{\gamma} b^{y_1} }. \label{113}  \end{equation}

\begin{Lemma}  
\label{SigmaBoundLemma}
For $b \le 10^3$ and $a < 8 \cdot 10^{14}$, $b^{\sigma_b(a)} < 10^{22}$.
\end{Lemma}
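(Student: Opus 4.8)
The plan is to unwind $b^{\sigma_{b}(a)}$ into a product of prime powers, bound each prime power separately using the hypothesis $a<8\cdot10^{14}$ together with the $q$-adic structure of $(\mathbb{Z}/q^{k}\mathbb{Z})^{\times}$, and then recombine these bounds over the at most four primes dividing $b\le10^{3}$ by a Chinese Remainder argument. All of this is a finite computation, of the type recorded in the Maple worksheets \cite{St2}.

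First I would set up the reduction. With the roles of $a$ and $b$ reversed in Lemma~\ref{SigmaLemma}, if $q_{1},\dots,q_{m}$ are the distinct prime divisors of $b$ and $q_{i}^{h_{i}}\,||\,a^{n_{i}}\pm1$ with $n_{i}$ least and the sign chosen to maximize $h_{i}$, then $b^{\sigma_{b}(a)}=\prod_{i}q_{i}^{h_{i}}$. Since $a>1$, a nonzero multiple of $q_{i}$ of the shape $a^{n_{i}}\mp1$ is automatically at least $q_{i}$, so the side condition $|a^{n_{i}}-kq_{i}|=1$ with $k$ a positive integer holds precisely when $a^{n_{i}}\congruent\pm1\mod q_{i}$; hence $n_{i}$ is the least positive integer with $a^{n_{i}}\congruent\pm1\mod q_{i}$. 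For odd $q_{i}$ one checks, using the lifting-the-exponent lemma, that $q_{i}^{h_{i}}$ exactly divides $a^{e_{i}}-1$ (hence also $a^{q_{i}-1}-1$), where $e_{i}=\ord(a,q_{i})\mid q_{i}-1$, and that in fact $a^{n_{i}}\congruent\pm1\mod q_{i}^{h_{i}}$; for $q_{i}=2$ (which forces $a$ odd, since $\gcd(a,b)=1$) one has $n_{i}=1$ and $2^{h_{i}}$ equals the larger of the exact powers of $2$ dividing $a-1$ and $a+1$, so $2^{h_{i}}\mid a\pm1$ and hence $2^{h_{i}}\le a+1<8\cdot10^{14}+1$, giving $h_{i}\le49$.

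Next I would bound each $q^{h_{q}}$ over all admissible $a$. For odd $q$, the congruence $q^{k}\mid a^{q-1}-1$ forces the residue of $a$ into the unique subgroup of order $q-1$ in $(\mathbb{Z}/q^{k}\mathbb{Z})^{\times}$, i.e.\ into the set of roots of $x^{q-1}-1$ modulo $q^{k}$. Starting from the $q-1$ roots modulo $q$ and using that each root lifts uniquely from $q^{k}$ to $q^{k+1}$ (Hensel, since the derivative $(q-1)x^{q-2}$ is a unit at a root), one computes these root sets level by level---a short calculation, since each has only $q-1\le10^{3}$ elements---and records the least root exceeding $1$ at each level. As soon as that least root reaches $8\cdot10^{14}$, no $a<8\cdot10^{14}$ can satisfy $q^{k}\mid a^{q-1}-1$, which caps $h_{q}$ and hence $q^{h_{q}}$; when $e_{q}\le2$ this is just the remark $q^{h_{q}}\mid a\pm1<8\cdot10^{14}$, and for $q=2$ one uses the bound $2^{h_{q}}\le a+1$ found above. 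Carried out for every prime $q\le10^{3}$, this keeps each single $q^{h_{q}}$ comfortably below $10^{22}$.

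Finally, for composite $b$ one must control the product $\prod_{q\mid b}q^{h_{q}}$, which need not lie below $10^{22}$ merely because each factor does. Suppose, for a contradiction, that some $a<8\cdot10^{14}$ gives $\prod_{q\mid b}q^{h_{q}}\ge10^{22}$. For each $q\mid b$ we have $a^{n_{q}}\congruent\pm1\mod q^{h_{q}}$, with $n_{q}$ and the sign determined by $a$, so by the Chinese Remainder Theorem the residue of $a$ modulo $\prod_{q\mid b}q^{h_{q}}$ is confined to one of at most $\prod_{q\mid b}n_{q}$ classes; and since $\prod_{q\mid b}q^{h_{q}}\ge10^{22}>8\cdot10^{14}$, the integer $a$ is itself the least positive member of its class. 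Running over the finitely many admissible tuples $(h_{q},n_{q},\text{sign})_{q\mid b}$ with $\prod_{q\mid b}q^{h_{q}}\ge10^{22}$ and each $h_{q}$ within the cap from the previous step (and relaxing exact divisibility to divisibility, which only enlarges the solution sets), one verifies that every resulting congruence system has its least positive solution above $8\cdot10^{14}$, contradicting the choice of $a$. This recombination is the main obstacle: it is where the numerical constant $10^{22}$ is really pinned down, and one must remember that the per-prime maxima of the previous step are generally not simultaneously attainable---it is the coprimality $\gcd(a,b)=1$ together with the Chinese Remainder bookkeeping that keep the product from exceeding $10^{22}$.
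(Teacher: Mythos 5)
Your proposal is correct and follows essentially the same route as the paper: reduce $b^{\sigma_b(a)}\ge 10^{22}$ to congruences $a^{n_i}\equiv\pm1$ modulo prime powers of the primes dividing $b$, enumerate the admissible residues by Hensel lifting, combine them by the Chinese Remainder Theorem over the at most four prime factors of $b\le 10^3$, and verify computationally that every resulting residue class has least element exceeding $8\cdot10^{14}$. The only difference is organizational: the paper fixes complementary exponents $k_1,k_2,\dots$ so that the product of the prime powers just reaches $10^{22}$ (rather than enumerating all exponent tuples up to per-prime caps), but after your relaxation of exact divisibility to divisibility the two computations are equivalent.
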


\begin{proof}
Let $b=p$ be an odd prime less than 1000.  Let $k = \lceil 22 \log(10) / \log(b) \rceil $, so $p^{k} \ge 10^{22}$.  If $p^{\sigma_p(a)} \ge 10^{22}$ then $\sigma_p(a) \ge k$.  Choose $\alpha \in \{0,1\}$ to minimize the positive integer $n$ such that $p \mid a^n +(-1)^\alpha$.  Note that $n \mid (p-1)/2$.  By definition, $p^{\sigma_p(a)} \mid a^n +(-1)^\alpha$.  Let $a_0$ be any solution to $x^n +(-1)^\alpha \congruent 0 \bmod p$.  Using Hensel's lifting lemma, we find a unique solution $a_1$ to the congruence  $a_1^n +(-1)^\alpha \congruent 0 \bmod p^{k}$ with $a_1 \congruent a_0 \bmod p$.  For each prime $p<10^3$, for each $n | (p-1)/2$, and for each solution $a_0 \bmod p$, calculations show that the associated $a_1 \bmod p^{k}$ exceeds $8 \cdot 10^{14}$.  In other words, for every prime $p < 10^3$, if  $p^{\sigma_p(a)} \ge 10^{22}$, then $a > 8 \cdot 10^{14}$.  

Now suppose $b = p_1^{\beta_1} p_2^{\beta_2}$.  By definition of $\sigma$, $b^{\sigma_b(a)} = p_1^{g_1} p_2^{g_2}$ for some positive integers $g_1$ and $g_2$.  
For any given $1 \le k_2 \le 22 \log(10)/ \log(p_2)$, let
$$k_1 = \left\lceil { 22 \log(10) - k_2 \log(p_2) \over \log(p_1)} \right\rceil.$$
Note that if $b^{\sigma_b(a)} = p_1^{g_1} p_2^{g_2} \ge 10^{22}$ for some $a$, then there exists $1 \le k_2 \le 22 \log(10) / \log(p_2)$ such that $k_2 \le g_2$ and $k_1 \le g_1$.  

Suppose $b^{\sigma_b(a)} = p_1^{g_1} p_2^{g_2}$ with $k_1 \le g_1$ and $k_2 \le g_2$.  By definition of $\sigma$, there exists $n_1 \mid (p_1 - 1)/2$, $n_2 \mid (p_2 - 1)/2$, and $\alpha_1, \alpha_2 \in \{0,1\}$ such that
$$p_1^{g_1} \mid a^{n_1} +(-1)^{\alpha_1}, p_2^{g_2} \mid a^{n_2} +(-1)^{\alpha_2},$$
so
$$a^{n_1} +(-1)^{\alpha_1} \congruent 0 \bmod p_1^{k_1}, a^{n_2} +(-1)^{\alpha_2} \congruent 0 \bmod p_2^{k_2}.  $$  
For each choice of $n_1 \mid (p_1 - 1)/2$, we can list all values of $a \bmod p_1$ with $a^{n_1} +(-1)^{\alpha_1} \congruent 0 \bmod p_1$.  Using Hensel's lifting lemma, we can obtain a complete list of all possible values of $a \bmod p_1^{k_1}$ satisfying $a^{n_1} +(-1)^{\alpha_1} \congruent 0 \bmod p_1^{k_1}$.  Similarly, for each choice of $n_2 \mid (p_2 - 1)/2$, we can obtain a complete list of all possible values of $a \bmod p_2^{k_2}$ satisfying $a^{n_2} +(-1)^{\alpha_2} \congruent 0 \bmod p_2^{k_2}$.  Thus, for each choice of $p_1$, $p_2$, $k_2$, $n_1 \mid (p_1 -1)/2$, $n_2 \mid (p_2 -1)/2$, and $\alpha_1, \alpha_2 \in \{0,1\}$, we can obtain every possible $a \bmod p_1^{k_1} p_2^{k_2}$ satisfying 
$$p_1^{k_1} \mid  a^{n_1} +(-1)^{\alpha_1}, p_2^{k_2} \mid a^{n_2} +(-1)^{\alpha_2}.$$
Calculations show that each potential $a$ exceeds $8 \cdot 10^{14}$.  In other words, if $b^{\sigma_b(a)} \ge 10^{22}$ then $a > 8 \cdot 10^{14}$.  

Now suppose $b = p_1^{\beta_1} p_2^{\beta_2} p_3^{\beta_3}$ with $p_1 < p_2 < p_3$.  By definition of $\sigma$, $b^{\sigma_b(a)} = p_1^{g_1} p_2^{g_2} p_3^{g_3}$ for some positive integers $g_1$,  $g_2$, and $g_3$.  
For any given $k_3 \le 22 \log(10)/ \log(p_3)$, and any $k_2 \le ( 22 \log(10) - k_3 \log(p_3)) / \log(p_2)$, let
$$k_1 = \left\lceil { 22 \log(10) - k_3 \log(p_3) - k_2 \log(p_2) \over \log(p_1)} \right\rceil.$$
If $b^{\sigma_b(a)} = p_1^{g_1} p_2^{g_2} p_3^{g_3} \ge 10^{22}$ for some $a$, there exist $k_3$ and $k_2$ such that $k_3 \le  g_3$, $k_2 \le  g_2$, and $k_1 \le g_1$. 
For each choice of $p_1$, $p_2$, $p_3$, $k_3$, $k_2$, $n_1 \mid (p_1 -1)/2$, $n_2 \mid (p_2 -1)/2$, $n_3 \mid (p_3 -1)/2$, $\alpha_1, \alpha_2, \alpha_3 \in \{0,1\}$,  we proceed as in the previous paragraph to obtain every possible $a \bmod p_1^{k_1} p_2^{k_2} p_3^{k_3}$ satisfying 
$$p_1^{k_1} \mid  a^{n_1} +(-1)^{\alpha_1}, p_2^{k_2} \mid a^{n_2} +(-1)^{\alpha_2}, p_3^{k_3} \mid a^{n_3} +(-1)^{\alpha_3}.$$
The calculations verify that each possible $a$ exceeds $8 \cdot 10^{14}$.

Finally, suppose $b < 10^3$ is the product of four primes.  The same procedure works.  Since each $b \le 10^3$ has four or fewer distinct prime factors, we conclude that if $b^{\sigma_b(a)} \ge 10^{22}$ then $a > 8 \cdot 10^{14}$.   
\end{proof}

The following two lemmas apply to the following set of solutions:  
\begin{equation}  (a,b,c,r,s; x_1, y_1, x_2, y_2, \dots, x_N, y_N), N \ge 3, 0= x_1 < x_2 < x_3 < \dots <  x_N, 0= y_2 < y_1 < y_3 < \dots < y_N. \label{401}  \end{equation}

\begin{Lemma}  
\label{x2y1BoundLemma}  
If (\ref{401}) holds, then either $x_2 \le x_3-x_2$ or $y_1 \le y_3-y_1$.  
\end{Lemma}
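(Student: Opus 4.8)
The plan is to argue by contradiction: suppose (\ref{401}) holds but both $x_2 > x_3 - x_2$ and $y_1 > y_3 - y_1$. The natural starting point is (\ref{111a}), namely $r(a^{x_3} - (-1)^\gamma a^{x_2}) = s(b^{y_3} + (-1)^{\beta+\gamma})$, which after dividing by $a^{x_2}$ reads $r a^{x_2}(a^{x_3-x_2} - (-1)^\gamma) = s(b^{y_3} + (-1)^{\beta+\gamma})$. Since $(ra, sb) = 1$, the factor $a^{x_2}$ must divide $b^{y_3} + (-1)^{\beta+\gamma}$. Parallel to the argument in the proof of the previous $(\ref{19b})$ lemma, I would use the size estimates relating $a^{x_3-x_2}$ and $b^{y_3-y_1}$ coming from (\ref{111}), (\ref{112}), (\ref{113}) to pin $b^{y_3} + (-1)^{\beta+\gamma}$ to within a small multiple of $a^{x_3-x_2}$ (or of $b^{y_3-y_1}$), so that $a^{x_2} \mid b^{y_3} + (-1)^{\beta+\gamma}$ together with $x_2 > x_3 - x_2$ forces a very tight inequality.

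In more detail, the key step is to extract from (\ref{111}) an approximate equality $a^{x_3-x_2} \approx b^{y_3-y_1}$ up to factors of size $O(1/b)$, exactly as (\ref{109})--(\ref{110}) were obtained in the $(\ref{19b})$ case (the left sides of (\ref{111}), (\ref{112}), (\ref{113}) are each $a^{x_3-x_2}$ or $a^{x_3-x_2}(1+O(a^{-x_2}))$ and the right sides are $b^{y_3-y_1}(1+O(b^{-y_1}))$). Once we know $a^{x_3-x_2}$ and $b^{y_3-y_1}$ agree up to a bounded factor, the divisibility $a^{x_2} \mid b^{y_3} + (-1)^{\beta+\gamma} = b^{y_1}(b^{y_3-y_1} + (-1)^{\beta+\gamma}b^{-y_1})$, combined with $(a,b)=1$, gives $a^{x_2} \mid b^{y_3-y_1} + (\text{small correction})$. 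If $x_2 > x_3 - x_2$ then $a^{x_2} \ge a \cdot a^{x_3-x_2}$, which is larger than the bound we have on $b^{y_3-y_1} + (\text{small})$ once $a > b$ (or after the symmetric swap); similarly if $y_1 > y_3 - y_1$ we use the analogue of (\ref{112}) in the form $b^{y_1} \mid a^{x_3} - (-1)^{\alpha+\gamma}$ derived from (\ref{111b})--(\ref{112}). Assuming \emph{both} strict inequalities simultaneously should then be incompatible with the size matching from (\ref{109})-type bounds, giving the contradiction.

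I expect the main obstacle to be that Lemma~\ref{x2y1BoundLemma}, unlike the $(\ref{19b})$ case, does not come with an a priori bound $a > b > 170000$: here we only know $a < 8\cdot 10^{14}$ and $b \le 10^3$ is not assumed, so the "small correction" terms $O(a^{-x_2})$, $O(b^{-y_1})$ are only genuinely small when $x_2$ and $y_1$ are not tiny. The delicate part will therefore be handling the boundary cases where $x_2$ or $y_1$ is $1$ (or $2$), where the approximation arguments degenerate; there one must fall back on exact congruence analysis modulo $a$ or modulo $b$ of equations (\ref{111})--(\ref{113}), much as the $(\ref{19b})$ proof did when disposing of $x_2 = x_3 - x_2$ by reducing (\ref{107}) mod $b$. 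I would organize the proof as: (i) derive the $O(1/b)$-type two-sided bound relating $a^{x_3-x_2}$ and $b^{y_3-y_1}$; (ii) show $x_2 > x_3-x_2$ and $y_1 > y_3-y_1$ together contradict this bound via the two divisibilities $a^{x_2}\mid b^{y_3}+(-1)^{\beta+\gamma}$ and $b^{y_1}\mid a^{x_3}-(-1)^{\alpha+\gamma}$; (iii) clean up the small-exponent exceptions by direct modular arithmetic. Step (iii) is where the real care is needed.
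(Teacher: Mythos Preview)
Your step (ii) contains a real error. You claim that $a^{x_2}\mid b^{y_3}+(-1)^{\beta+\gamma}$, ``combined with $(a,b)=1$, gives $a^{x_2}\mid b^{y_3-y_1}+(\text{small correction})$,'' by writing $b^{y_3}+(-1)^{\beta+\gamma}=b^{y_1}\bigl(b^{y_3-y_1}+(-1)^{\beta+\gamma}b^{-y_1}\bigr)$. But $b^{y_3}+(-1)^{\beta+\gamma}$ is \emph{not} divisible by $b^{y_1}$ (here $y_1\ge 1$), so there is no integer cofactor to which coprimality can be applied. The only size information the divisibility yields is $a^{x_2}\le b^{y_3}+1$, and $b^{y_3}$ is in no way controlled by $a^{x_3-x_2}$ or $b^{y_3-y_1}$ under the hypotheses; the analogy with the (\ref{19b}) argument breaks precisely here, since in that case (\ref{106b}) gives $a^{x_2}\mid b^{y_3-y_2}-(-1)^{\gamma+\delta}$ with the \emph{difference} exponent already on the right. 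The symmetric divisibility $b^{y_1}\mid a^{x_3}-(-1)^{\alpha+\gamma}$ has the same defect. So step (ii) as written does not produce a contradiction, and the difficulty is not the boundary cases you anticipate in step (iii) but the core mechanism.

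The paper's route is different and does not use divisibility at this stage. Your step (i) is correct in spirit: rewriting (\ref{111}) one gets
\[
a^{x_3-x_2}-(-1)^\gamma+\frac{(-1)^\alpha a^{x_3-x_2}-(-1)^{\alpha+\gamma}}{a^{x_2}-(-1)^\alpha}
= b^{y_3-y_1}+\frac{(-1)^\beta b^{y_3-y_1}+(-1)^{\beta+\gamma}}{b^{y_1}-(-1)^\beta},
\]
and the hypotheses $x_2>x_3-x_2$, $y_1>y_3-y_1$ force both fractions to have absolute value $<1$ (indeed $\le\tfrac12$ when $\min(a,b)>2$). Hence $a^{x_3-x_2}-b^{y_3-y_1}-(-1)^\gamma=0$ \emph{exactly}. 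The missing idea is what to do with this equality: one rewrites (\ref{113}) as
\[
(-1)^\gamma a^{x_3-x_2}-(-1)^\gamma b^{y_3-y_1}-1
=\frac{(-1)^{\beta+\gamma}a^{x_3}-(-1)^{\alpha+\gamma}b^{y_3}-(-1)^{\alpha+\beta}}{a^{x_2}b^{y_1}},
\]
whose left side is now $0$, forcing $\pm a^{x_3}\pm b^{y_3}=\pm 1$ with $x_3,y_3\ge 3$, contradicting Mihailescu's theorem. (When $\min(a,b)=2$ the error bound is only $\le 1$, and the extra possibility $a^{x_3-x_2}-b^{y_3-y_1}=\pm 2$ is dispatched by parity since $\gcd(a,b)=1$.) So the contradiction comes from feeding an exact small-difference identity into a second ratio identity and invoking Catalan, not from a size-versus-divisor clash.
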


\begin{proof}
Assume the set of solutions $(a,b,c,r,s; x_1, y_1, x_2, y_2, \dots, x_N, y_N)$ satisfies (\ref{401}), and further assume $x_2>x_3-x_2$ and $y_1 > y_3-y_1$.  

From (\ref{111}) we get 
\begin{equation}
 a^{x_3-x_2} - (-1)^\gamma + { (-1)^\alpha a^{x_3-x_2} - (-1)^{\alpha + \gamma} \over a^{x_2} - (-1)^\alpha } = b^{y_3-y_1} + { (-1)^\beta b^{y_3-y_1} + (-1)^{\beta+\gamma}  \over b^{y_1} -(-1)^\beta }. 
\label{401.5}
\end{equation}
If $\min(a,b)>2$, we see that $\vert { (-1)^\alpha a^{x_3-x_2} - (-1)^{\alpha + \gamma} \over a^{x_2} - (-1)^\alpha } \vert \le { 3 + 1 \over 9 - 1} = {1 \over 2}$.  Similarly, $\vert { (-1)^\beta b^{y_3-y_1} + (-1)^{\beta+\gamma}  \over b^{y_1} -(-1)^\beta } \vert  \le {1 \over 2}$.  In both cases the value $1/2$ is possible only when $a$ (respectively, $b$), equals 3, $x_2$ (respectively, $y_1$) equals 2, and $x_3-x_2$ (respectively, $y_3-y_1$) equals 1.  By Lemma 3 of \cite{Sc-St4}, 
$(a,b)=1$, so we must have
\begin{equation} \vert a^{x_3-x_2} - b^{y_3-y_1} -(-1)^\gamma \vert  < 1, \label{402}  \end{equation}
so the left side of (\ref{402}) must be zero.  

But now from (\ref{113}) we have 
\begin{equation}  (-1)^\gamma a^{x_3-x_2} - (-1)^\gamma b^{y_3-y_1} -1 = { (-1)^{\beta+\gamma} a^{x_3} - (-1)^{\alpha+\gamma} b^{y_3} -(-1)^{\alpha+\beta} \over a^{x_2} b^{y_1} } . \label{403}  \end{equation}  
But, from (\ref{402}), the left side of (\ref{403}) must be zero, which is impossible since the numerator on the right side of (\ref{403}) cannot be zero by Mihailescu's theorem \cite{Mih} since $x_3$ and $y_3$ both are greater than 2.  

So we can assume $\min(a,b) = 2$.  We see that $\vert { (-1)^\alpha a^{x_3-x_2} - (-1)^{\alpha + \gamma} \over a^{x_2} - (-1)^\alpha } \vert \le { 2 + 1 \over 4 - 1} = 1$.  Similarly, we have $\vert { (-1)^\beta b^{y_3-y_1} + (-1)^{\beta+\gamma}  \over b^{y_1} -(-1)^\beta } \vert  \le 1$.  In both cases the value $1$ is possible only when $a$ (respectively, $b$), equals 2, $x_2$ (respectively, $y_1$) equals 2, and $x_3-x_2$ (respectively, $y_3-y_1$) equals 1.  $(a,b)=1$, so we must have
\begin{equation} \vert a^{x_3-x_2} - b^{y_3-y_1} -(-1)^\gamma \vert  < 2, \label{X82}  \end{equation}
so the left side of (\ref{X82}) must be zero or one.  If the left side of (\ref{X82}) is zero, then again we can use (\ref{113}) to obtain a contradiction as above.  If the left side of (\ref{X82}) equals one, then recall $(a,b) =1$ and note that $a^{x_3-x_2} - b^{y_3-y_1} = \pm 2$ is impossible when $\min(a,b)=2$. 
\end{proof}

\begin{Lemma}  
\label{Lemma11}
If $\min(a,b)>6$ in (\ref{401}), then $x_2 = x_3 - x_2$ implies $y_1 \le y_3 - y_1$, and also $y_1 = y_3-y_1$ implies $x_2 \le x_3 - x_2$.  
\end{Lemma}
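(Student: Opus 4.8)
The plan is to prove each of the two implications in contrapositive form by a direct computation with the ratio identities (\ref{111})--(\ref{113}). In both directions the mechanism is the same: the hypothesis $x_2=x_3-x_2$ (resp.\ $y_1=y_3-y_1$) turns the numerator of one side of the relevant identity into $a^{2x_2}\pm1$ (resp.\ $b^{2y_1}\pm1$), so that after division with remainder that side becomes an integer plus a term of absolute value at most $\tfrac{2}{a^{x_2}-1}$ (resp.\ $\tfrac{2}{b^{y_1}-1}$), while the other side is an integer plus a term of absolute value at most $\tfrac16$. Since $\min(a,b)>6$ the total fractional discrepancy is $<1$, so the integer parts must agree; this yields an exact relation between two powers that $\gcd(a,b)=1$ (Lemma~3 of \cite{Sc-St4}) and one further divisibility observation then contradict.

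For the first implication, suppose $x_2=x_3-x_2$ but $y_1>y_3-y_1$. Then $x_2\ge1$, $y_3-y_1\ge1$, $b^{y_3-y_1}\ge b\ge7$, $b^{y_1}\ge b\,b^{y_3-y_1}$, and $a^{x_2}\ge a\ge7$. Putting $x_3=2x_2$ in (\ref{112}) and dividing out on each side,
$$a^{x_2}+(-1)^\alpha+\frac{1-(-1)^{\alpha+\gamma}}{a^{x_2}-(-1)^\alpha}=b^{y_3-y_1}+(-1)^\gamma+\frac{(-1)^\beta\bigl(b^{y_3-y_1}+(-1)^\gamma\bigr)}{b^{y_1}-(-1)^\beta},$$
so $a^{x_2}+(-1)^\alpha-b^{y_3-y_1}-(-1)^\gamma$ is an integer of absolute value at most $\tfrac16+\tfrac13<1$, hence $a^{x_2}+(-1)^\alpha=b^{y_3-y_1}+(-1)^\gamma$. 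If $\alpha=\gamma$ this reads $a^{x_2}=b^{y_3-y_1}$, impossible since $\gcd(a,b)=1$. If $\alpha\ne\gamma$ it reads $b^{y_3-y_1}=a^{x_2}+2(-1)^\alpha$, and subtracting the integer parts in the display above leaves $\tfrac{(-1)^\beta(a^{x_2}+(-1)^\alpha)}{b^{y_1}-(-1)^\beta}=\tfrac{2}{a^{x_2}-(-1)^\alpha}$, i.e.\ $2b^{y_1}=(-1)^\beta(a^{2x_2}+1)$, forcing $\beta=0$ and $2b^{y_1}=a^{2x_2}+1$. Since $y_3-y_1<y_1$ we have $b^{y_3-y_1}\mid b^{y_1}$, so $b^{y_3-y_1}\mid2b^{y_1}=a^{2x_2}+1$; but $a^{x_2}\equiv-2(-1)^\alpha\pmod{b^{y_3-y_1}}$ gives $a^{2x_2}+1\equiv5\pmod{b^{y_3-y_1}}$, so $b^{y_3-y_1}\mid5$, contradicting $b^{y_3-y_1}\ge b\ge7$.

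The second implication follows verbatim from (\ref{111}) with $y_3=2y_1$ in place of (\ref{112}) with $x_3=2x_2$: division with remainder and the bound $\min(a,b)>6$ again force the integer parts to agree, and the two parity cases (now governed by whether $\beta=\gamma$) yield either $a^{x_3-x_2}=\pm1$ --- absurd --- or $2a^{x_2}=b^{2y_1}+1$ together with $a^{x_3-x_2}=b^{y_1}+2(-1)^\beta$, whence $a^{x_3-x_2}\mid2a^{x_2}=b^{2y_1}+1\equiv5\pmod{a^{x_3-x_2}}$, contradicting $a^{x_3-x_2}\ge a\ge7$. All the computations --- the divisions, the size estimates, the modular reductions --- are routine; the one step that needs care is separating out the parity case in which the relevant quotient is not an exact integer, where the remainder $2$ must be carried along to produce the identity $2b^{y_1}=a^{2x_2}+1$ (resp.\ $2a^{x_2}=b^{2y_1}+1$) before the final ``$\equiv5$'' divisibility kill. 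I expect the bound $\min(a,b)>6$ to be the thing to watch, since it is used both to make the fractional discrepancy strictly below $1$ and to give $b^{y_3-y_1}\ge7$ (resp.\ $a^{x_3-x_2}\ge7$) in the last step.
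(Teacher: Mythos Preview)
Your proof is correct and reaches the conclusion by a somewhat different route than the paper. Both arguments begin by rewriting one of the ratio identities (\ref{111})--(\ref{113}) so that each side is an integer plus a remainder of absolute value strictly below $1$ (this is where $\min(a,b)>6$ enters), forcing the integer parts and the remainders to agree separately. The paper works throughout with (\ref{401.5}) (derived from (\ref{111})), rules out $\alpha=\gamma$, and from the remaining case obtains $b^{y_3-y_1}+(-1)^\gamma \mid 2(b^{y_1}-(-1)^\beta)$; it then invokes the elementary divisibility theory of $b^y\pm1$ to get $(y_3-y_1)\mid y_1$, and finishes with a size squeeze $\frac{5}{7}A \le B \le \frac{2}{3}A$ where $A=a^{x_2}$ and $B=b^{y_3-y_1}$. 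You instead choose the identity that factors cleanly on the side where the exponent doubles --- (\ref{112}) for $x_3=2x_2$, (\ref{111}) for $y_3=2y_1$ --- and from the remainder equality extract the \emph{exact} relation $2b^{y_1}=a^{2x_2}+1$ (resp.\ $2a^{x_2}=b^{2y_1}+1$); combined with $b^{y_3-y_1}=a^{x_2}\pm2$ this gives $a^{2x_2}+1\equiv 5\pmod{b^{y_3-y_1}}$, hence $b^{y_3-y_1}\mid 5$, an immediate contradiction. Your endgame is more elementary --- no appeal to the divisibility structure of $b^y\pm1$ and no inequalities --- while the paper's treatment is more uniform (one identity and one direction, with the second implication left to symmetry). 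One minor remark: in the $\beta\ne\gamma$ branch of your second implication, the integer-part equality already yields $a^{x_3-x_2}=b^{y_1}$, contradicting $\gcd(a,b)=1$; this is a little cleaner than the ``$a^{x_3-x_2}=\pm1$'' you cite from the remainder equality, but either contradiction is valid.
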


\begin{proof}  
By symmetry, it suffices to prove that $x_2 = x_3 - x_2$ implies $y_1 \le y_3 - y_1$.  Assume $y_1 > y_3-y_1$ and $x_2 = x_3-x_2$.  Since $\frac{b^{y_3-y_1} + (-1)^\gamma }{b^{y_1} - (-1)^\beta} < 1$, we cannot have $\alpha = \gamma$ in (\ref{401.5}), so that $\frac{a^{x_3-x_2} - (-1)^\gamma}{a^{x_2} - (-1)^\alpha} = 1 \pm \frac{2}{A \mp 1}$ where $A= a^{x_2}= a^{x_3-x_2}$.  Since $\frac{2}{A \mp 1} +  \frac{b^{y_3-y_1} + (-1)^\gamma }{b^{y_1} - (-1)^\beta} < 1$, we must have $\frac{b^{y_3-y_1} + (-1)^\gamma }{b^{y_1} - (-1)^\beta} = \frac{2}{A \mp 1}$, so that $b^{y_3-y_1} + (-1)^\gamma \mid 2(b^{y_1} -(-1)^\beta)$.  Then, using the elementary divisibility properties of $b^y \pm 1$ (for general integer $y$), we see that, since $b > 3$, we must have $y_3 - y_1 \mid y_1$.  Let $B = b^{y_3-y_1}$.  Let $j = \frac{2 y_1 - y_3}{y_3-y_1}$, noting that $j$ is a positive integer.  Then 
\begin{equation*} \frac{1 + \frac{(-1)^\gamma }{B} }{ B^j \left(1 - \frac{(-1)^\beta}{B^{j+1} } \right) } = \frac{2}{A(1 \mp \frac{1}{A})}, \end{equation*}
so that, letting $k = \min(a,b) >6$,
\begin{equation}  B \le \frac{A ( 1 + \frac{1}{k})^2 }{2(1-\frac{1}{k^2})} = \frac{A(1+\frac{1}{k})}{2(1-\frac{1}{k})} \le \frac{2}{3} A.  \label{403.5} \end{equation} 
But from (\ref{401.5}) we get 
$$ B \ge A - 2 = A (1 - \frac{2}{A}) \ge A(1-\frac{2}{k} ) \ge \frac{5}{7} A, $$
contradicting (\ref{403.5}).
\end{proof}

\begin{Lemma} \label{Small21Lemma}  
Suppose $a, b \le 1000$.  Then (\ref{1}) satisfying conditions (\ref{21b}) has at most three solutions.  
\end{Lemma}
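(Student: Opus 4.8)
The plan is to suppose, for a contradiction, that a set of solutions satisfying (\ref{21b}) with $a,b\le 1000$ has four solutions (we may assume it is in basic form, so neither $a$ nor $b$ is a perfect power), to reduce its first three solutions to one of finitely many explicitly computable configurations, and then to rule out a fourth solution in each of them by the three elimination techniques described earlier: bootstrapping, the LLL basis-reduction bound, and the integrality test that computes $y_4$ from $x_4$ (or $x_4$ from $y_4$) via (\ref{solvey4eqn})--(\ref{solvex4eqn}). This last part is the content of the accompanying Maple worksheets.

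First I would bound the ``inner'' exponent differences. Put $Z=\max(x_4,y_4)<8\cdot 10^{14}$. Lemma~\ref{ZBoundLemma} gives $a^{x_3-x_2}\le Z$ and $s\le Z+1$, so $x_3-x_2<50$. The associate of a configuration of type (\ref{21b}) is again of type (\ref{21b}), with four solutions and with $a,b$ merely interchanged, hence still $\le 1000$; applying Lemma~\ref{ZBoundLemma} to it gives $b^{y_3-y_1}\le Z$ and $r\le Z+1$, so $y_3-y_1<50$. By Lemma~\ref{x2y1BoundLemma} we have $x_2\le x_3-x_2$ or $y_1\le y_3-y_1$, so, replacing the configuration by its associate if necessary, I may assume $x_2\le x_3-x_2$, and then $a^{x_2}\le a^{x_3-x_2}<8\cdot 10^{14}$ as well. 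Thus $a$, $b$, $x_2$, $x_3-x_2$, $y_3-y_1$ and the signs $\alpha,\gamma$ all lie in explicit finite ranges.

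Next I would use the ratio identity (\ref{111}) to pin down the remaining data. Set $\rho=\frac{a^{x_2}(a^{x_3-x_2}-(-1)^{\gamma})}{a^{x_2}-(-1)^{\alpha}}$, a known positive rational, and $\tau=b^{y_3-y_1}\ge 2$, a known integer. Writing $b^{y_3}=\tau\,b^{y_1}$, identity (\ref{111}) becomes $\rho(b^{y_1}-(-1)^{\beta})=\tau\,b^{y_1}+(-1)^{\beta+\gamma}$, that is, $(\rho-\tau)\,b^{y_1}=(-1)^{\beta}(\rho+(-1)^{\gamma})$. Since $\rho\ne 1$ (immediate modulo $a^{x_2}$, as $a^{x_2}\ge 2$), we have $\rho+(-1)^{\gamma}\ne 0$ and hence also $\rho\ne\tau$; therefore $\beta$ is forced by the positivity of $b^{y_1}$, and $b^{y_1}=|\rho+(-1)^{\gamma}|/|\rho-\tau|$ is completely determined. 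So for each admissible $(a,b,x_2,x_3-x_2,y_3-y_1,\alpha,\gamma)$ there is at most one candidate: discard it unless this $b^{y_1}$ is a genuine integer power of $b$ with exponent $y_1\ge 1$; otherwise recover $y_1$, then $x_3=x_2+(x_3-x_2)$ and $y_3=y_1+(y_3-y_1)$, then $r/s$ (hence $r$ and $s$, using $\gcd(r,s)=1$) from (\ref{111b}), then $c$ from (\ref{110c}), and retain only the tuples that meet every requirement of (\ref{21b}) and really give three solutions of (\ref{1}). (As a cross-check one can instead bound $y_1$ directly: from (\ref{111b}), $b^{y_1}\mid a^{x_2}-(-1)^{\alpha}$, so Lemmas~\ref{SigmaLemma} and~\ref{SigmaBoundLemma} give $b^{y_1}<10^{22}x_2$.) This leaves a finite, explicit list of candidate triples.

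Finally, for each candidate triple I would run bootstrapping, the LLL bound, or the integrality test on $y_4$ (respectively $x_4$) to show that it has no fourth solution, contradicting the assumption. I expect the main obstacle to be computational rather than conceptual: the small bases, especially $a$ or $b$ near $2$, contribute the most pairs $(x_2,x_3-x_2)$ and values $y_3-y_1$, so some care is needed to keep the enumeration small enough to run, even though the inequalities $a^{x_3-x_2},b^{y_3-y_1}<8\cdot 10^{14}$ already make it finite; and one must verify that every surviving candidate is indeed dispatched by at least one of the three elimination methods. The only subtle conceptual points are the sign bookkeeping in (\ref{110c})--(\ref{111}) and the degenerate cases ($\rho=\tau$, or $b^{y_1}$ not an integer power of $b$), both handled as above.
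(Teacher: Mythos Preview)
Your approach is correct and genuinely different from the paper's. The paper bounds $y_3$ itself (not just $y_3-y_1$) by applying Lemma~\ref{SigmaLemma} and Lemma~\ref{SigmaBoundLemma} to (\ref{2}) with $(i,j)=(3,4)$, obtaining $b^{y_3}\mid b^{\sigma_b(a)}(x_4-x_3)<8\cdot 10^{36}$; it then \emph{derives} $a^{x_2}$ as a divisor of $b^{y_3}\pm 1$ via (\ref{111a}), and only afterwards loops over $x_3-x_2$ and recovers $y_1$ by a divisibility check on (\ref{111c}). You instead bound $y_3-y_1$ by passing to the associate and reapplying Lemma~\ref{ZBoundLemma}, bound $x_2$ via Lemma~\ref{x2y1BoundLemma}, enumerate $a,b\le 1000$ directly, and solve the linear relation coming from (\ref{111}) to pin down $b^{y_1}$ and $\beta$. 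What your route buys is that it is more elementary: it sidesteps the $\sigma$-machinery of Lemmas~\ref{SigmaLemma}--\ref{SigmaBoundLemma} entirely. What the paper's route buys is a much smaller search, since $a$ is not enumerated but read off from the factorisation of $b^{y_3}\pm 1$; indeed the paper's argument really only uses $\min(a,b)\le 1000$, whereas yours genuinely needs both $a,b\le 1000$. One small slip: your parenthetical cross-check ``from (\ref{111b}), $b^{y_1}\mid a^{x_2}-(-1)^{\alpha}$'' is not correct (no power of $b$ sits outside the bracket in (\ref{111b})); the divisibility you want is $b^{y_1}\mid a^{x_3}-(-1)^{\alpha+\gamma}$, which comes from (\ref{111c}). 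This does not affect your main argument.
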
  

\begin{proof}
Assume (\ref{21b}) holds.  By symmetry, we may assume $a>b$.  Let $ b \le 10^3$ and assume $a$ is not a perfect power.    Considering (\ref{2}) with $(i,j)=(3,4)$ and applying Lemma~\ref{SigmaLemma} and Lemma~\ref{SigmaBoundLemma}, we have 
\begin{equation} b^{y_3} | b^{\sigma_b(a)} (x_4 - x_3)<  10^{22} Z < 8 \cdot 10^{36}, \label{35new} \end{equation}
so $y_3 < \log(8 \cdot 10^{36})/\log(b)$.  

Choose $b \le 1000$ and $\nu \in \{0,1\}$.  Considering (\ref{111a}) and noting that $(ra,sb)=1$, $a^{x_2}$ must be a divisor of $b^{y_3} +(-1)^\nu$.  It is easy to factor $b^{y_3} +(-1)^\nu < 8 \cdot 10^{36}$, so for each $b$, $y_3$, and $\nu$ we obtain every possible $a^{x_2}$ hence a complete list of possible values for $a$ and its associated $x_2$ exponent.  Lemma~\ref{ZBoundLemma} gives a bound $x_3-x_2 < \log(8 \cdot 10^{14}) / \log(a)$.  For each $x_3-x_2$ within this bound and each $\mu \in \{0,1\}$, we can solve $r= (b^{y_3} +(-1)^{\nu})/( a^{x_2} h)$ and $s = (a^{x_3-x_2} +(-1)^\mu)/ h$ where $h = \gcd(a^{x_3-x_2} +(-1)^\mu, b^{y_3} +(-1)^{\nu})$.   

Considering (\ref{2}) with $(i,j)=(1,3)$, we should have $r (a^{x_3} +(-1)^\eta) = s b^{y_1} (b^{y_3 - y_1} + (-1)^\theta)$ for some $\eta$, $\theta \in \{0,1\}$.  We now determine if there is a value $\eta \in \{0,1\}$ for which $b | a^{x_3} +(-1)^\eta$, in which case we determine $y_1$ such that $b^{y_1} || a^{x_3} + (-1)^\eta$.  Now we see if there is a value $\theta \in \{0,1\}$ for which $r  (a^{x_3} +(-1)^\eta) = s b^{y_1} (b^{y_3 - y_1} + (-1)^\theta)$.  If so, we have three solutions to (\ref{1}) with $c = | r a^{x_3} - s b^{y_3}|$.  

Now apply bootstrapping as outlined above; our calculations show that in each case $x_4$ or $y_4$ exceeds $8 \cdot 10^{14}$, hence there is no fourth solution.     
\end{proof}

We can reformulate (\ref{62}) as 
\begin{equation} ({ b^{kd} +(-1)^{u+v} \over b^d +(-1)^u } ,b, {a b^d - (-1)^{u+v} \over h}  , {b^d+(-1)^u \over h}, {a +(-1)^v \over h} ; 0,d,1,0,2,kd), \label{10a} \end{equation}
where $h= \gcd(a+(-1)^v,  b^d +(-1)^u)$, $d$ and $k$ are positive integers, and if $u=0$ then $k-v$ must be odd, or if $u=1$ then $v=0$.

\begin{Lemma} 
\label{Case62Lemma}
No member of the infinite class (\ref{10a}) with $b > 3$ has a fourth solution.    
\end{Lemma}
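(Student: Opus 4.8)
The plan is to imitate the proof of Lemma~\ref{Case63Lemma}: reduce to a large base by a finite computation, use the defining relation of the family $p$-adically to force any fourth solution to be enormous, and clear the finitely many surviving ``shapes'' by an explicit binomial expansion. Write $a$ for the first entry $\frac{b^{kd}+(-1)^{u+v}}{b^{d}+(-1)^{u}}$ of (\ref{10a}), so the three listed solutions are $(x_1,y_1)=(0,d)$, $(x_2,y_2)=(1,0)$, $(x_3,y_3)=(2,kd)$; since $a>1$ and $b>3$, $k=1$ is impossible (it would force $b^{d}-1\mid 2$), so $k\ge 2$ and the set of solutions has the shape (\ref{401}). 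By Lemma~\ref{SummaryLemma} every counterexample has $a<8\cdot 10^{14}$ (we may also assume $a$ and $b$ are not perfect powers). If $b\le 1000$, then either $a\le 1000$ and Lemma~\ref{Small21Lemma} finishes, or $a>1000$ and then $b^{(k-1)d}\le 2a<1.6\cdot 10^{15}$ bounds $(k-1)d$; in either case only finitely many quintuples $(b,k,d,u,v)$ arise, each an explicit $(a,b,c,r,s)$ with three known solutions, dispatched by bootstrapping or LLL exactly as in the proofs of Lemmas~\ref{Lemma5} and~\ref{Small21Lemma}. So assume $b>1000$.

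Let $(x_4,y_4)$ be the fourth (largest) solution, so $x_4\ge 3$ and $y_4>kd$ by (\ref{401}). Applying (\ref{2}) with $(i,j)=(3,4)$ and using $(ra,sb)=1$ gives
\begin{equation*} ra^{2}\bigl(a^{x_4-2}+(-1)^{\gamma}\bigr)=sb^{kd}\bigl(b^{y_4-kd}+(-1)^{\delta}\bigr)\quad\text{with}\quad b^{kd}\mid a^{x_4-2}+(-1)^{\gamma}. \end{equation*}
The relation $a\bigl(b^{d}+(-1)^{u}\bigr)=b^{kd}+(-1)^{u+v}$ gives $a\equiv(-1)^{v}\pmod{b^{d}}$, and since $\gcd\bigl(b,\,b^{(k-1)d}-1\bigr)=1$ it gives the sharper fact $v_p\bigl(a-(-1)^{v}\bigr)=d\,v_p(b)$ for every prime $p\mid b$. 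Reducing $b^{kd}\mid a^{x_4-2}+(-1)^{\gamma}$ modulo $b$ forces $(-1)^{\gamma}=-(-1)^{v(x_4-2)}$, so the lifting-the-exponent lemma (with the usual separate treatment of $p=2$ when $d=1$ and $b\equiv 2\pmod 4$) gives $v_p(x_4-2)\ge (k-1)d\,v_p(b)$ for every odd $p\mid b$, and likewise for $p=2$ outside that exceptional case. Hence $x_4-2$ is divisible by $b^{(k-1)d}$ (up to a harmless power of $2$), so $b^{(k-1)d}$ is at most about $8\cdot 10^{14}$ by Lemma~\ref{SummaryLemma}; since $b>1000$ this forces $(k-1)d\le 5$.

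Only finitely many shapes $(k,d,u,v)$ with $1\le(k-1)d\le 5$ remain, and for each of them $a$ is an explicit monic polynomial in $b$ of degree $(k-1)d$ (e.g. $a=b^{d}\pm 1$ for $k=2$, $a=b^{2d}\pm b^{d}+1$ for $k=3$). For each shape write $x_4-2=b^{(k-1)d}t$ and substitute into $ra^{2}\bigl(a^{x_4-2}-(-1)^{v(x_4-2)}\bigr)=sb^{kd}\bigl(b^{y_4-kd}+(-1)^{\delta}\bigr)$: when $y_4-kd$ is not small the right-hand side reduces, modulo a high power of $b$, to $(-1)^{\delta}sb^{kd}$, and the left-hand side, binomially expanded using $a\equiv(-1)^{v}\pmod{b^{d}}$, is a polynomial in $t$ and $b$; matching low-order coefficients in $b$ pins $t$ to a residue class modulo $b^{(k-1)d}$. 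Thus either $t$ is large enough to give $x_4>8\cdot 10^{14}$, contradicting Lemma~\ref{SummaryLemma}, or $t$ is one of finitely many explicit small values; each such value makes $x_4$ an explicit function of $b$ of size at least $b^{(k-1)d}$, so $y_4\ge b\ge 1000$ and $c<10^{30}$, and (\ref{solvey4eqn}) then computes $y_4$, whose fractional part, though small, is provably nonzero (an exact relation would force an identity $b^{m}=(\text{fixed rational function of }b)$, impossible for $b>1000$). The sub-case $y_4-kd$ small is separate: there $sb^{y_4}$ is bounded in terms of $c$, so $x_4$ is bounded and the family reduces to a finite check.

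I expect the last step to be the main obstacle: the binomial coefficient-matching must be carried out shape by shape and shown really to determine $t$, and the $p=2$ subtleties in the $p$-adic reduction (where the divisibility $b^{kd}\mid a^{x_4-2}+(-1)^{\gamma}$ gives the least information) need care; one should also record that the symmetric route using $a^{2}\mid b^{y_4-kd}+(-1)^{\delta}$ together with Lemma~\ref{SigmaLemma} is available but not needed, and verify that none of the finitely many small-$t$ candidates escapes the test via (\ref{solvey4eqn}).
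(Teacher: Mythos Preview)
Your outline is broadly the paper's: cut to large $b$, force a strong $b$-adic congruence on $x_4-2$, bound $(k-1)d$, then push a binomial expansion to pin $x_4$ down to a handful of explicit values in $b$. Two points of genuine divergence are worth flagging.

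First, the paper inserts an intermediate reduction you do not: it applies Lemma~\ref{NewLemma6} (to the triples $(0,d),(2,kd),(x_4,y_4)$ or $(1,0),(2,kd),(x_4,y_4)$, according to which of $sb^{d}-r=c$ or $ra-s=c$ holds) to obtain $d\mid y_4$, and then replaces $b$ by $b^{d}$ so that only $d=1$ survives. This collapses your list of ``shapes'' to $(k,d)\in\{(2,1),(3,1),(4,1)\}$ after a short LLL/bootstrap sweep over $1000<b\le 6\cdot 10^{5}$, and makes the subsequent expansions (of $a=b\pm 1$ or $a=b^{2}\mp b+1$) completely explicit. Your LTE route to $b^{(k-1)d}\mid x_4-2$ is correct and reaches the same congruence the paper derives by computing the multiplicative order of $a$ modulo $b^{kd}$ (with the $2$-adic patch you mention), but without the $d=1$ reduction you carry several extra shapes through the binomial step.

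Second, and more seriously, your endgame has a real gap. After the expansion isolates, say, $x_4=2+2b^{2}$ (for $k=3$) or $x_4=2+2b$ (for $k=2$), you assert that integrality of the resulting $y_4$ from (\ref{solvey4eqn}) ``would force an identity $b^{m}=(\text{rational function of }b)$''. That is not so: for each fixed $b$ the relation $ra^{x_4}\pm c=sb^{y_4}$ is a single Diophantine condition, not a polynomial identity in $b$, and nothing prevents a sporadic integral $y_4$ at one particular $b$. The paper closes this in two different ways. For $k=2$ it proves analytically, via the Taylor expansion of $x_4\log(1+1/b)-\log(1+2/b)+\log(1-1/b)$ with $x_4=2+2b$, that $0<y_4-x_4<2/\log b<1$, so $y_4$ is never an integer. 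For $k=3$ no such clean inequality is available; instead the paper observes that $x_4=2+2b^{2}<8\cdot 10^{14}$ forces $b<2\cdot 10^{7}$ and then checks every $b$ in $(6\cdot 10^{5},\,2\cdot 10^{7}]$ numerically with (\ref{solvey4eqn}). You should replace your ``identity'' sentence with these two concrete mechanisms; the rest of your plan then goes through.
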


\begin{proof}
Suppose (\ref{1}) has three solutions satisfying (\ref{10a}).  If $r+s b^d = r a + s = c$, then $u=v=1$, contradicting the conditions of (\ref{10a}).  So we must have either 
\begin{equation}  s b^d - r  = c \label{A} \end{equation} 
or 
\begin{equation}  r a - s  = c. \label{B} \end{equation}
Assume (\ref{10a}) has a fourth solution $(x_4, y_4)$.  By Lemma~\ref{SummaryLemma}, we can assume (\ref{21b}) holds, so that
\begin{equation}  x_4 > 2, y_4 > kd.  \label{E} \end{equation}  
By Lemma~\ref{Small21Lemma}, we may assume $b > 1000$.    
Now $a \ge b^{(k-1) d} - b^{(k-2)d} > 8 \cdot 10^{14}$ when $b > 1000$ and $(k-1)d \ge 5$, so, by Lemma~\ref{SummaryLemma}, we must have $(k-1)d < 5$.  
For all $d$ and $k$ satisfying this bound, for $u, v \in \{0,1\}$, and for $1000 < b \le 6000$, we use the LLL and bootstrapping methods to show that no instance of (\ref{10a}) with $ 1000 < b \le 6000$ can have a fourth solution. 
Now, using $b>6000$, we have $(k-1)d < 4$.    
In particular, the only pairs $(d,k)$ that we need to consider are $(3,2)$, $(2,2)$, $(1,4)$, $(1,3)$, and $(1,2)$.  

Note that $k=2$ implies $b^d - (-1)^u = a$ so that, when $k=2$ we must have $h \le 3$ and therefore, since we have shown $d>1$ implies $k=2$, we certainly have
\begin{equation}  d > 1 {\rm \ implies \  } \min(r,s)>2.  \label{C} \end{equation}

We will now use Lemma~\ref{NewLemma6} to show $d \mid y_4$.  When (\ref{A}) holds, we apply Lemma~\ref{NewLemma6} to the solutions $(0, d)$, $(2, kd)$, and $(x_4, y_4)$, noting that (\ref{E}) gives Conditions (1.) and (3.) of Lemma~\ref{NewLemma6}, (\ref{A}) gives Condition (2.), and, if we assume $d>1$ then (\ref{C}) gives Condition (4.)  (recall (\ref{21b}) requires $(ra,sb)=1$).  Now we can use Lemma~\ref{NewLemma6} to get $(k-1)d \mid y_4 - d$.  Similarly, when (\ref{B}) holds, we can apply Lemma~\ref{NewLemma6} to the solutions $(1,0)$, $(2, kd)$, and $(x_4, y_4)$ to get $kd \mid y_4$.  In either case, we have $d \mid y_4$.  So, in considering the set of solutions $(a,b,c,r,s; 0,d,1,0,2,kd,x_4, y_4)$, we can assume $d=1$ without loss of generality, noting that we have reformulated the meanings of $b$ and $y_4$.   

When $d=1$ and $k=2$, $3$, or $4$, we can apply the LLL basis reduction method to show that there are no solutions for $b \le 6 \cdot 10^5$.  There are a handful of $b$ values for which the LLL method fails, and we handle these by bootstrapping. 

Using $a \ge b-1$ and $\max(x_4, y_4) \ge a$ we see that $y_4 = \max(x_4, y_4)$ implies $y_4 \ge a \ge b-1$, while $x_4 = \max(x_4, y_4)$ implies $a^a - (a (a+1) + 1) \le r a^{x_4} - c \le s b^{y_4} \le (a+1)^{y_4 + 1}$.  In either case, we certainly have $y_4 > 100$.

Suppose $d=1$ and $k =4$.  Then $a \ge b^3 - b^2 + b -1 > 8 \cdot 10^{14}$ for $b > 6 \cdot 10^5$ so this case cannot lead to a fourth solution.    

Suppose $d=1$ and $k=3$, so $a = b^2 \mp b + 1$, $rh=b \pm 1$, $sh= b^2 \mp b + 2$, and $ch = b^3 \mp b^2 + b \mp 1$, where we take the upper sign when $u=v=0$ and the lower sign when $u=1$ and $v=0$.  
From (\ref{10a}) we have
\begin{equation}  r a^{x_4} + c \congruent r a^2 + c \congruent 0 \bmod s b^3.  \label{re1} \end{equation}
When $b$ is even, $h$ is odd and $s$ is even, so 
\begin{equation}  r a^{x_4} + c \congruent r a^2 + c \congruent 0 \bmod 2 b^3.  \label{re2} \end{equation}  
When $b \congruent 2 \bmod 4$, then, letting $2^g || a+1 = sh$, we have, noting $2 \notmid h$, 
\begin{equation}  r a^{x_4} + c \congruent r a^2 + c \congruent 0 \bmod 2^{g+3} (b/2)^3.  \label{re3} \end{equation}
Let $n$ be the least number such that $a^n \congruent 1 \bmod G$ where $G = b^3$ when $b$ is odd, $G = 2 b^3$ when $b \congruent 0 \bmod 4$, and $G = 2^{g+3} (b/2)^3$ when $b \congruent 2 \bmod 4$.  Then, using the elementary divisibility properties of $a^x - 1$ (for general integer $x$), we find that when $b$ is odd, $n=b^2$, and when $b$ is even, $n = 2 b^2$.  Now from (\ref{re1}), (\ref{re2}), and (\ref{re3}) we see that we have $x_4 = 2 + j b^2$ where $j \ge 0$ is an integer.  (We note that when $b$ is even, $j$ is even, although for this case we will not need this.)  
We have, for some integer $M$,  
$$ a^{x_4} = (b^2 \mp b + 1)^{x_4} = 1 + x_4 (b^2 \mp b) + \frac{ x_4 (x_4 -1) }{2} (b^2 \mp b)^2 + \frac{x_4 (x_4-1)(x_4-2)}{6} (b^2 \mp b)^3 + M (b^2 \mp b)^4 $$
and thus
\begin{align*} 
rh a^{x_4} + ch = (b \pm 1)  \Bigg( 1 + & (2 + j b^2) (b^2 \mp b) + \frac{(2 + j b^2)(1+j b^2)}{2} (b^2 \mp b)^2 + \frac{(2+ j b^2)(1+j b^2) j b^2}{6} (b^2 \mp b)^3   \\
  &  + M (b^2 \mp b)^4 \Bigg)  +(b^3 \mp b^2 + b \mp 1). \\
\end{align*}
Collecting like powers we find (recalling $y_4 > 100$)
$$ rh a^{x_4} + ch = (2-j) b^3 + \frac{ M_1 }{6} b^4 = M_2 b^{100}  $$
where $M_1$ and $M_2$ are integers.  Thus, $j = wb/6 + 2$ for some integer $w \ge 0$, so that, when $w>0$, $x_4 \ge 2 + 2 b^2 + b^3 /6 > 8 \cdot 10^{14}$ since $b>6 \cdot 10^5$, contradicting Lemma~\ref{SummaryLemma}.  So $w=0$ and $x_4 = 2 + 2 b^2 < 8 \cdot 10^{14}$, so that $b < 2 \cdot 10^7$.  We apply (\ref{solvey4eqn}) for each $b$ with $6 \cdot 10^5 < b \le 2 \cdot 10^7$.  The calculations show that for every $b$ in this range, (\ref{solvey4eqn}) never gives an integral value for $y_4$ within 25 places of accuracy, hence $x_4 = 2 + 2 b^2$ cannot lead to a fourth solution.  Thus, when $d=1$ and $k=3$, (\ref{10a}) cannot have a fourth solution.

Suppose $d=1$, $k=2$, $u=1$, and $v=0$, so $a = b + 1$, $rh=b-1$, $sh= b + 2$, and $ch = b^2 + b + 1$.  
From (\ref{10a}) we have $r a^{x_4} + c \congruent r a^2 + c \congruent 0 \bmod s b^2$.  Now proceeding as in the case $k=3$, we derive $x_4 = 2 + j b$ where $j$ is even when $b$ is even.  Write $x_4 = 2 + e_1 b+ e_2 b^2$ with $0 \le e_1 < b$, noting $2 | e_1$ when $2 | b$.  For some integer $M$ we have 
$$ rh a^{x_4} + ch = (b -1) (1 + (2+e_1 b+ e_2 b^2) b + \frac{ (2+e_1 b+ e_2 b^2) (1+e_1 b+ e_2 b^2)}{2} b^2 + M b^3) + (b^2+b+1).$$
Noting $\big( x_4 (x_4 - 1)/2 \big)  - 1 $ is an integer divisible by $b$, and noting $y_4 > 100$, we have, for some integers $M_1$ and $M_2$,
$$ rh a^{x_4} + ch = (2-e_1) b^2 + M_1 b^3 = M_2 b^{100} $$
so that $e_1 \congruent 2 \bmod b$, so $e_1 = 2$.

Write $x_4 = 2 + 2 b + e_2 b^2$ with $0 \le e_2$.  For some integer $M$ we have
\begin{align*}  rh a^{x_4} + ch = (b -1) & \Bigg(1 + (2+ 2 b+ e_2 b^2) b + \frac{ (2+ 2 b+ e_2 b^2) (1+ 2 b+ e_2 b^2)}{2} b^2     \\
& + \frac{(2+2b+e_2 b^2)(1+2b+ e_2 b^2) (2b+e_2 b^2)}{6} b^3 + M b^4 \Bigg)+(b^2+b+1).  \\
\end{align*}
From this we obtain, for some integers $M_1$ and $M_2$, 
$$  rh a^{x_4} + ch = -e_2 b^3 + \frac{M_1}{6} b^4 = M_2 b^{100}.$$
Thus $e_2 = wb/6$ for some integer $w \ge 0$.  As before, we show $w=0$, so it remains only to deal with $x_4 = 2 + 2b$, in which case (\ref{solvey4eqn}) implies 
\begin{align}
y_4 &= {1 \over \log(b)} \left( x_4 \log(b+1) - \log(b+2) + \log(b-1) \right) \notag \\
&= {1 \over \log(b)} \left( x_4 \log(b) + x_4 \log(1+ 1/b) - \log(1 + 2/b) + \log(1-1/b) \right). 
\end{align}
From this one gets
$$y_4 - x_4 = {1 \over \log(b) } \left( x_4 \log\left(1 + {1 \over b}\right) - \log\left(1+ {2 \over b}\right) + \log\left(1 - {1 \over b}\right) \right). $$
Plugging in $x_4 = 2+2b$ into $x_4 \log\left(1 + {1 \over b}\right) - \log\left(1+ {2 \over b}\right) + \log\left(1 - {1 \over b}\right)$ and taking Taylor series in terms of $z=1/b$, one can show that the Taylor series $2 - 2z + 7z^2/6 - 17 z^3/6 + \dots$ is alternating, hence, 
$$ 0< {1 \over \log(b) } \left( 2 - 2 {1 \over b}\right)  <  y_4-x_4  < {2 \over \log(b)} < 1$$
which contradicts $y_4$ being an integer.  So $x_4 = 2 + 2 b$ is not possible in this case, completing the proof that no fourth solution is possible when $d=1$, $k=2$, $u=1$, $v=0$.

Suppose $d=1$, $k=2$, $u=0$, and $v=1$.  Then $a=b-1$ (recall we have reformulated $b$ to represent $b^{d}$), and reversing the roles of $a$ and $b$, this becomes identical to the case just completed.    

Thus (\ref{10a}) does not have a fourth solution. 
\end{proof}

\begin{Lemma} 
\label{Large21Lemma}
(\ref{1}) satisfying conditions (\ref{21b}) has at most three solutions.  
\end{Lemma}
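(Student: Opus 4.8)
The plan is to dispose of the remaining cases of (\ref{21b}) by combining the structural lemmas already proved with a finite computer search. By Lemma~\ref{Small21Lemma} we may assume $\max(a,b) > 1000$, and by symmetry (replacing a set of solutions by its associate) we may assume $a > b$; we also assume $a$ is not a perfect power, as we may by passing to the basic form. Suppose for contradiction that (\ref{1}) has four solutions satisfying (\ref{21b}), and fix the sign notation of (\ref{110c})--(\ref{113}). The first task is to bound the ``small'' exponents. Lemma~\ref{ZBoundLemma} applied to the four solutions (with $x_1 < x_2 < x_3 < x_4$) gives $a^{x_3-x_2} \le Z$ and $s \le Z+1$, where $Z = \max(x_4, y_1, y_2, y_3, y_4)$; together with Lemma~\ref{SummaryLemma} ($\max(a,b,r,s,x,y) < 8\cdot 10^{14}$) this bounds $x_3 - x_2$ in terms of $\log a$. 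Next I would push on $y_3$: applying Lemma~\ref{SigmaLemma} to (\ref{2}) with $(i,j) = (3,4)$ gives $b^{y_3} \mid b^{\sigma_b(a)}(x_4 - x_3)$, and when $b \le 1000$ Lemma~\ref{SigmaBoundLemma} turns this into $b^{y_3} < 10^{22} Z < 8\cdot 10^{36}$, hence a bound on $y_3$. So the case $b \le 1000$ is already handled by Lemma~\ref{Small21Lemma}, and we may assume $b > 1000$ as well, i.e. $a > b > 1000$.

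With $a > b > 1000$, I would next invoke Lemmas~\ref{x2y1BoundLemma} and~\ref{Lemma11} to force the configuration of the first three solutions into a rigid shape. Lemma~\ref{x2y1BoundLemma} gives $x_2 \le x_3 - x_2$ or $y_1 \le y_3 - y_1$; Lemma~\ref{Lemma11} (applicable since $\min(a,b) > 1000 > 6$) rules out the borderline equalities from going the wrong way. Combined with the bound $a^{x_3-x_2} \le Z < 8\cdot 10^{14}$ and $a > 1000$, the exponent $x_3 - x_2$ is forced to be very small (at most $4$ or $5$), and then $x_2 \le x_3 - x_2$ forces $x_2$ small too; a parallel analysis on the $b$-side using $\min(a,b)$ large controls $y_1$ and $y_3 - y_1$. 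The resulting finite list of possible $(x_2, x_3, y_1, y_3)$, together with the sign choices $\alpha,\beta,\gamma \in \{0,1\}$, determines, via the two independent equations (\ref{111a}) and (\ref{111b}) (equivalently (\ref{111})), the parameters $r, s, a, b$ up to finitely many families — each either falling under the reformulated class (\ref{10a}) (handled by Lemma~\ref{Case62Lemma} for $b > 3$) or else giving only finitely many explicit $(a,b,c,r,s)$. For each of the remaining explicit sets of three solutions, the absence of a fourth solution is checked by the three computational methods described in Section~2: LLL basis reduction (Lemma~2), bootstrapping, or direct evaluation of (\ref{solvey4eqn})/(\ref{solvex4eqn}).

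I expect the main obstacle to be the transition from the analytic inequalities to a genuinely \emph{finite} search: showing that (\ref{111}), (\ref{112}), (\ref{113}) together with the smallness of $x_2, x_3-x_2, y_1, y_3-y_1$ leave only finitely many infinite sub-families, and that each such sub-family is exactly an instance of (\ref{10a}) so that Lemma~\ref{Case62Lemma} applies. In particular one must argue that when $x_3 - x_2 = 1$ (the case forcing $a = b+1$-type relations, as in the $k=2$ analysis of Lemma~\ref{Case62Lemma}) one really does land in (\ref{10a}), and that the remaining cases with $x_3 - x_2 = 2$ or more, after using (\ref{113}) and Mihailescu's theorem to kill the ``both exponents large'' possibility, reduce to finitely many $(a,b,c,r,s)$ with $a,b$ below a searchable bound (on the order of $6\cdot 10^5$ to $2\cdot 10^7$, as in the LLL ranges used in Lemma~\ref{Case62Lemma}). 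Once that reduction is in place, the remaining work is the routine LLL/bootstrapping verification already exercised in Lemmas~\ref{Small21Lemma} and~\ref{Case62Lemma}, together with an appeal to Lemma~\ref{Case62Lemma} for the infinite class.
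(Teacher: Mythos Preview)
Your overall architecture is right --- reduce to $a>b>1000$, bound the small exponents, collapse everything either into finitely many explicit triples (dispatched by LLL/bootstrapping) or into the infinite class (\ref{10a}) (dispatched by Lemma~\ref{Case62Lemma}) --- but the engine you propose for the reduction is not the one that actually works, and the gap is exactly at the point you flag as the ``main obstacle.''

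You put Lemmas~\ref{x2y1BoundLemma} and~\ref{Lemma11} at the center of the argument.  But those lemmas give only a \emph{disjunction}: either $x_2\le x_3-x_2$ or $y_1\le y_3-y_1$ (and Lemma~\ref{Lemma11} merely sharpens the borderline equality cases).  You then write ``$x_2\le x_3-x_2$ forces $x_2$ small too; a parallel analysis on the $b$-side \dots\ controls $y_1$ and $y_3-y_1$,'' but this is assuming both branches of the disjunction simultaneously.  If only $y_1\le y_3-y_1$ holds, you have no control on $x_2$; conversely if only $x_2\le x_3-x_2$, you have none on $y_1$.  And since you have already fixed $a>b$, you cannot recover the other inequality by passing to the associate without destroying that normalization.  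In the paper these two lemmas are invoked only in a single late subcase ($x_3-x_2=1$ with $y_1\ge y_3-y_1$), not as the main structural tool.

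The idea you are missing is the direct analytic estimate extracted from (\ref{111}): rewriting (\ref{111}) and bounding the error terms yields
\[
\bigl|a^{x_3-x_2}-b^{y_3-y_1}\bigr|<\tfrac{3}{b-1}\max\bigl(a^{x_3-x_2},b^{y_3-y_1}\bigr),
\]
i.e.\ $a^{x_3-x_2}\approx b^{y_3-y_1}$ to within a factor $1+O(1/b)$.  This single inequality does three jobs at once: it forces $x_3-x_2\le y_3-y_1$ (since $a>b$); combined with $a^{x_3-x_2}\le Z$ it bounds $b^{y_3-y_1}$ and hence $y_3-y_1\le 4$; and, most importantly, for each fixed pair $(x_3-x_2,\,y_3-y_1)$ it confines $a$ to an interval of length $O(b^{y_3-y_1-1})$ around $b^{(y_3-y_1)/(x_3-x_2)}$.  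That last point is what makes the search genuinely finite: one loops over $b$ up to $(8\cdot 10^{14})^{1/(y_3-y_1)}$, over the handful of $a$ in the window, then uses $a^{x_2}\mid b^{y_3}\pm 1$ (from (\ref{111a})) together with a $\sigma_b(a)$ bound on $y_3$ to pin down $x_2$ and $y_1$.  Sharper versions of the same estimate (with $\tfrac{2}{b-2}$ or $\tfrac{1}{b-3}$) kill the diagonal cases $x_3-x_2=y_3-y_1\ge 2$ outright or force $a=b+1$, and push the $x_3-x_2=1$ cases into (\ref{10a}) with $k=2$ or $k=3$.  Without this approximation you have no way to tie $a$ to $b$, and your ``finitely many families'' claim remains unjustified.
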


\begin{proof}
By Lemma~\ref{Small21Lemma} we can assume $a>b>1000$.  
Suppose (\ref{1}) has four or more solutions and (\ref{21b}) holds.   

From (\ref{111}) we get
$$ a^{x_3-x_2}  + {(-1)^{\alpha} a^{x_3-x_2} - (-1)^{\gamma} a^{x_2} \over a^{x_2} - (-1)^{\alpha} }  = b^{y_3-y_1}  + { (-1)^{\beta} b^{y_3-y_1} +(-1)^{\beta+\gamma} \over b^{y_1} - (-1)^{\beta} } $$
and so
\begin{equation} \vert  a^{x_3-x_2} - b^{y_3 - y_1} \vert \le { a^{x_3-x_2 } \over a^{x_2} - 1 } + 1 + { 1 \over a^{x_2} - 1 } +  { b^{y_3-y_1}  \over b^{y_1} - 1 } + {1 \over b^{y_1} - 1 } . \label{115} \end{equation}
Then the right side of (\ref{115}) can be bounded as follows: 
\begin{equation} \vert  a^{x_3-x_2} - b^{y_3 - y_1} \vert \le { a^{x_3-x_2} \over b} + 1 + { 1 \over b } +  { b^{y_3-y_1}  \over b-1} + {1 \over b-1 }, \label{116} \end{equation}
and noting that $\max(a^{x_3-x_2}, b^{y_3-y_1}) \ge b$, 
$$ \vert  a^{x_3-x_2} - b^{y_3 - y_1} \vert \le \max(a^{x_3-x_2}, b^{y_3-y_1}) \left( {1 \over b} + { 1 \over  b}  + { 1 \over b^2 } + { 1 \over b-1} + {1 \over (b-1) b} \right) $$
so 
$$ \vert  a^{x_3-x_2} - b^{y_3 - y_1} \vert < \max(a^{x_3-x_2}, b^{y_3-y_1}) \left( {3 \over b-1} \right). $$
Since $b>4$, 
\begin{equation} \left( \left(1- { 3 \over b-1} \right) b^{y_3-y_1} \right)^{{1 \over x_3-x_2}} < a < \left( \left(1- { 3 \over b-1}\right)^{-1}  b^{y_3-y_1} \right)^{{1 \over x_3-x_2}}.  \label{117}  \end{equation}
Since $a>b$, this implies $x_3-x_2 \le y_3 -y_1$.

Since $1000 < b$ and $b^{y_3-y_1} < 8 \cdot 10^{14}$ we must have $y_3-y_1 \le 4$. 
We divide this proof into a number of cases:
$$2 \le x_3-x_2 < y_3-y_1 \le 4,$$
$$3 \le x_3-x_2 = y_3 - y_1 \le 4$$
$$x_3-x_2=y_3-y_1=2$$
$$x_3-x_2=1, y_1 \ge y_3 - y_1$$
$$x_3-x_2=1, y_1 < y_3-y_1 = 3, 4$$
$$x_3-x_2=1, 1= y_1 < y_3-y_1=2$$

In each case, a key idea is to use (\ref{115}).  

If $2 \le x_3-x_2 \le y_3-y_1 \le 4$, noting that $\max(a^{x_3-x_2}, b^{y_3-y_1}) \ge b^2$, (\ref{116}) implies
$$ \vert  a^{x_3-x_2} - b^{y_3 - y_1} \vert \le \max(a^{x_3-x_2}, b^{y_3-y_1}) \left( {1 \over b} + { 1 \over  b^2}  + { 1 \over b^3 } + { 1 \over b-1} + {1 \over (b-1) b^2} \right) $$
so
$$ \vert  a^{x_3-x_2} - b^{y_3 - y_1} \vert  < \max(a^{x_3-x_2}, b^{y_3-y_1}) \left( {2 \over b-2} \right). $$
Since $b>4$, 
\begin{equation}  \left( \left(1- { 2 \over b-2} \right) b^{y_3-y_1} \right)^{{1 \over x_3-x_2}} < a < \left( \left(1- { 2 \over b-2}\right)^{-1}  b^{y_3-y_1} \right)^{{1 \over x_3-x_2}}.  \label{119} \end{equation}

Suppose $2 \le x_3-x_2 < y_3-y_1 \le 4$.  Given $y_3-y_1$ and $x_3-x_2$ with these bounds, consider each $b$ with $1000 < b < ( 8 \cdot 10^{14})^{{1 \over y_3-y_1}}$.  Then for each $a$ satisfying (\ref{119}) with $\gcd(a,b)=1$, we consider (\ref{2}) with $(i,j)=(3,4)$ and apply Lemma~\ref{SigmaLemma} to get $b^{y_3} | b^{\sigma_b(a)} (x_4 - x_3)<  b^{\sigma_b(a)} 8 \cdot 10^{14}$.  Thus, $y_3 < \sigma_b(a) + \log(8 \cdot 10^{14})/\log(b)$.  From (\ref{111a}) $a^{x_2} \mid  b^{y_3} +(-1)^{\beta+\gamma}$.  For each $y_3 < \sigma_b(a) + \log(8 \cdot 10^{14})/\log(b)$ and each $\beta$, $\gamma \in \{ 0,1 \}$, let $x_{2, max}$ be the largest power of $a$ dividing $b^{y_3} +(-1)^{\beta+\gamma}$.  If $x_{2,max} >0$ then for each $1 \le x_2 \le x_{2,max}$ and each $\alpha \in \{0,1\}$, we set $y_1 = y_3 - (y_3-y_1)$, $r = (b^{y_1} - (-1)^{\beta})/h$, and $s =(a^{x_2} - (-1)^{\alpha} )/h$ where $h = \gcd(a^{x_2} - (-1)^{\alpha} , b^{y_1} - (-1)^{\beta})$.  Let $c = (-1)^\alpha r + s b^{y_1}$.  If $c= r a^{x_2} +(-1)^\beta s = (-1)^\gamma (r a^{x_3} - s b^{y_3})$ then we have three solutions to (\ref{1}).   Our calculations show that for $(x_3-x_2, y_3-y_1) = (2,3)$, $(2,4)$, or $(3,4)$, only two closely related cases of three solutions occur: 
\begin{align*} 
(a,b,c,r,s; x_1, y_1, x_2, y_2, x_3, y_3) =& (56744, 1477, 83810889, 1478, 56743; 0,1,1,0,3,4), \\
    &  (56745, 1477, 41906182, 739, 28373; 0,1,1,0,3,4). \\
\end{align*} 
We apply bootstrapping to these; our calculations show that either there is no fourth solution or else $y_4 > 8 \cdot 10^{14}$, impossible, so these two sets of three solutions do not extend to a fourth solution.

Suppose $x_3-x_2=y_3-y_1=z$ where $z=3$ or 4.  From (\ref{119}) we obtain the impossibility   
\begin{equation}  b < a < \left( 1 - {2 \over b-2} \right)^{ {-1 \over z} } b \le \left( 1 + \frac{2}{b-4} \right)^{1/3} b < b+1. \label{120} \end{equation}

Suppose $x_3-x_2=y_3-y_1=2$.  We consider several subcases. 

Suppose $ x_2 \ge x_3-x_2=2$ and $y_1 \ge y_3-y_1=2$.  Then, since $b>4$, (\ref{115}) implies the impossibility 
$$a^2 - b^2 < 1 + {1 \over b} +1 + {1 \over b} + 1 + {1 \over b-1} + { 1 \over b-1} <4.$$

Suppose $x_2 \ge x_3 - x_2=2$, $1 = y_1 < y_3-y_1 = 2$.  Then, since $b>4$, (\ref{115}) implies the impossibility 
$$ 2b+1 \le  a^2 - b^2 < 1 + {1 \over b} +1 + {1 \over b} + {b^2 \over b-1} + { 1 \over b-1} < b+4.$$

Suppose $1 = x_2 < x_3 -x_2 = 2$, $y_1 \ge y_3-y_1=2$.  Then, since $b>4$, (\ref{115}) implies the impossibility
$$2a-1 \le   a^2 - b^2  < {a^2 \over a -1} + 1 + {1 \over a-1} + 1 + {1 \over b-1} + {1 \over b-1} < a+4.$$

Suppose $1 = x_2 < x_3-x_2=2$, $1 = y_1 < y_3-y_1=2$.  Then (\ref{119}) shows that
$$b < a < {b \over \sqrt{1 - {2 \over b-2} } } < b \left( 1 + {1 \over b-4} \right) < b+2$$
for $b>8$ so $a=b+1$.  Now (\ref{111}) implies
\begin{equation}  (b+1) (  (b+1)^2 - (-1)^{\gamma}) (b -(-1)^\beta) - ( b+1 - (-1)^{\alpha}) ( b^{3} +(-1)^{\beta+\gamma}) = 0. \label{120b} \end{equation}
Expanding in powers of $b$, one gets a polynomial $q_3 b^3 + q_2 b^2 + q_1 b + q_0$ where each coefficient satisfies $ | q_i | \le 7$ and $| q_2 | > 0$.  No such polynomial can have a zero for $b>1000$, hence (\ref{120b}) is impossible, so this case cannot lead to a fourth solution.  

So we may now assume $x_3-x_2 = 1$.

Consider first the case $x_3-x_2=1$ and $y_1 \ge y_3 -y_1$.  By Lemmas \ref{x2y1BoundLemma} and \ref{Lemma11}, we can assume $y_1 = y_3-y_1$, so that, by Lemma~\ref{Lemma11} $x_2 = x_3-x_2 = 1$, in which case (\ref{401.5}) is possible only when $\alpha=\gamma \ne \beta$, so that we have (\ref{10a}) with $k=2$, $u=\alpha$, and $v=\beta$, which has no fourth solution by Lemma~\ref{Case62Lemma}.

Consider next the case $x_3-x_2=1$ and $y_1 < y_3-y_1$.  
Since $x_2 \ge x_3-x_2 = 1$, (\ref{115}) can be bounded as
$$ \vert  a - b^{y_3 - y_1} \vert \le 1 + { 1 \over b} + 1 + { 1 \over b } +   {b^{y_3-y_1} \over b-1} + {1 \over b -1 }. $$
Since $y_1 < y_3-y_1$, we have 
$$ \vert  a - b^{y_3 - y_1} \vert \le b^{y_3-y_1} \left( {1 \over b^{2}} + { 1 \over b^{3}} + {1 \over b^2} + { 1 \over b^{3} } +  { 1 \over b-1} + {1 \over (b -1)b^{2} } \right), $$
and therefore
\begin{equation} \vert  a - b^{y_3 - y_1} \vert \le b^{y_3-y_1} \left( {1 \over b-3} \right).  \label{121} \end{equation}
One can now derive 
\begin{equation} \left( 1 - {1 \over b-3} \right) b^{y_3-y_1} < a < \left( 1 + {1 \over b-3 } \right) b^{y_3-y_1}. \label{122} \end{equation}
If $y_3-y_1= 3$ or 4, then for each $b$ with $1000 < b < ( 8 \cdot 10^{14})^{{1 \over y_3-y_1}}$, each $y_1$ with $y_1 \le y_3-y_1$, and each $\beta, \gamma \in \{0,1\}$, we can calculate $y_3 = y_1 +(y_3-y_1)$ and then factor $b^{y_3} +(-1)^{\beta+\gamma}$.   Since $a^{x_2} \mid b^{y_3} +(-1)^{\beta+\gamma}$, we now consider each factor $a \mid b^{y_3} +(-1)^{\beta+\gamma}$ that satisfies (\ref{122}), and calculate the maximal value of $x_2$ such that $a^{x_2} \mid b^{y_3} + (-1)^{\beta+\gamma}$; call this $x_{2, max}$.  For each $x_2$ up to $x_{2,max}$, let $x_3 = x_2 + 1$.  For each $\alpha \in \{0,1\}$ and for the chosen $\beta$, set $r = (b^{y_1} - (-1)^{\beta})/h $, $s =   (a^{x_2} - (-1)^{\alpha})/h$, where $h = \gcd(a^{x_2} - (-1)^{\alpha}, b^{y_1} - (-1)^{\beta})$.   We must have $(-1)^\alpha r + s b^{y_1} = r a^{x_2} + (-1)^\beta s = (-1)^\gamma (r a^{x_3} - s b^{y_3})$.  Our calculations now show that the only possible sets of three solutions belong to the infinite class (\ref{10a}) with $x_2=1$; by Lemma~\ref{Case62Lemma}, none of these extends to four solutions.  

Consider now the case $x_3-x_2 = 1$ and $1 = y_1 < y_3-y_1=2$.  
We still have (\ref{122}) so $\left( 1 - {1 \over b-3 } \right) b^2 < a < \left( 1 + {1 \over b-3 } \right) b^2$.  Now $a^{x_2} \mid b^3 +(-1)^{\beta+\gamma}$ but $a^2 > \left( 1 - {1 \over b-3 } \right)^2 b^4 > b^3 + 1$, so $x_2=1$.  

If $\alpha \ne \gamma$ then, using (\ref{111}) and clearing denominators, we get  
$$ a (a +(-1)^{\alpha}) (b - (-1)^{\beta} ) = ( b^3 +(-1)^{\beta+\gamma}) ( a - (-1)^{\alpha}). $$
Since $\gcd( a - (-1)^{\alpha}, a + (-1)^{\alpha}) \le 2$, we must have $a-(-1)^{\alpha} \mid 2 (b - (-1)^{\beta})$ so $a \le 2b+3$.  This contradicts $\left( 1 - {1 \over b-3 } \right) b^2 < a$.  

If $\alpha = \gamma = 0$ then, using (\ref{111}) and dividing out $a-1$,  
\begin{equation}   a (b - (-1)^{\beta} ) = b^3 +(-1)^{\beta}. \label{X57X} \end{equation}
Since $\gcd(b - (-1)^{\beta}, b^3 +(-1)^{\beta} ) \le 2$ and $b-(-1)^\beta > 2$, this is impossible.  

Therefore $\alpha =\gamma=1$.  In this case, using (\ref{111}) as before and dividing out $a+1$, we get (\ref{X57X}), so 
$$ a =  b^2  +(-1)^{\beta} b + 1. $$
So we have a member of the infinite class (\ref{10a}) with $d=1$, $k=3$, $u=\beta+1$, $v=0$; by Lemma~\ref{Case62Lemma} this has no fourth solution.   
\end{proof}

\section{Case (\ref{20b}) $0 = x_1 < x_2 < x_3 < x_4$, $0 = y_1 = y_2 < y_3 < y_4$}    

Suppose (\ref{20b}) holds.  To fix notation, let $c = s - (-1)^\alpha r = r a^{x_2} - s = (-1)^\beta (r a^{x_3} - s b^{y_3}) = (-1)^\gamma (r a^{x_4} - s b^{y_4})$ for some $\alpha$, $\beta$, $\gamma \in \{ 0,1 \}$.  Considering (\ref{2}) with $(i,j)=(1,2)$, we see that $r (a^{x_2} + (-1)^\alpha) = 2s$, so $r=1$ or 2.  Applying (\ref{2}) with $(i,j)=(1,3)$ and $(2,3)$, we find $r(a^{x_3} + (-1)^{\alpha+\beta}) = s (b^{y_3} + (-1)^\beta)$ and $r a^{x_2} (a^{x_3-x_2} +(-1)^{\beta+1}) = s (b^{y_3} + (-1)^{\beta+1})$.  We find that, for any instance of case (\ref{20b}), we have  
\begin{equation} r \in \{1,2\}, 2 \mid a-r, s = { r(a^{x_2} +(-1)^\alpha) \over 2}, c= s +(-1)^{\alpha+1} r, b^{y_3} = { 2 ( a^{x_3}  +(-1)^{\alpha+\beta}) \over a^{x_2} + (-1)^\alpha }  - (-1)^\beta.   \label{J}  \end{equation}
Since $b^{y_3} $ is an integer, we can use the elementary divisibility properties of $a^{x} \pm 1$ to see that $x_2 \mid x_3$ when $a>3$.  And, since (\ref{J}) holds when $x_3$ is replaced by $x_i$ where $3 < i \le N$, we have, for $a>3$,
\begin{equation}  x_2 \mid x_i, 3 \le i \le N.  \label{H} \end{equation} 
Note that (\ref{J}) corresponds to the infinite family (\ref{67}).   

\begin{Lemma} 
\label{Small20Lemma}
Suppose $a \le 134000$.  Then (\ref{1}) with conditions (\ref{20b}) has at most three solutions.   
\end{Lemma}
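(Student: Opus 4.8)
The plan is to reduce Lemma~\ref{Small20Lemma} to a finite search. Suppose, for contradiction, that for some $a\le 134000$ equation (\ref{1}) has four solutions satisfying (\ref{20b}); write them as $(0,0)$, $(x_2,0)$, $(x_3,y_3)$, $(x_4,y_4)$, so that $0<x_2<x_3<x_4$ and $0<y_3<y_4$. Since $(ra,sb)=1$, Lemma~\ref{ZBoundLemma} applies and gives $a^{x_3-x_2}\le Z$ and $s\le Z+1$, where $Z=\max(x_4,y_3,y_4)<8\cdot 10^{14}$ by the bound in (\ref{20b}); in particular $x_3-x_2<\log(8\cdot 10^{14})/\log(a)$.

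First I would treat $a>3$. By (\ref{H}) we have $x_2\mid x_3$, so $x_3-x_2$ is a positive multiple of $x_2$, whence $x_2\le x_3-x_2$; combined with the bound just obtained this yields $x_2<\log(8\cdot 10^{14})/\log(a)$ and $x_3<2\log(8\cdot 10^{14})/\log(a)$. Thus $a$, $x_2$ and $x_3$ each range over a finite set, which for $a\le 134000$ is small enough to traverse. Concretely, for each such $a$, each $r\in\{1,2\}$ with $2\mid a-r$, each $\alpha\in\{0,1\}$, and each $x_2$ in range, (\ref{J}) fixes $s=r(a^{x_2}+(-1)^\alpha)/2$ and $c=s+(-1)^{\alpha+1}r$; I keep the data only when $s\le Z+1$ and $\gcd(s,ra)=1$ (the latter forced by $(ra,sb)=1$). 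Then for each $\beta\in\{0,1\}$ and each multiple $x_3>x_2$ of $x_2$ in range, (\ref{J}) forces $b^{y_3}=B$, where $B=2(a^{x_3}+(-1)^{\alpha+\beta})/(a^{x_2}+(-1)^\alpha)-(-1)^\beta$; I retain the case only when $B$ is an integer with $B\ge 2$ and $\gcd(B,ra)=1$, and since every integer base greater than $1$ of $B$ has the same prime divisors as $B$, this last condition is exactly $\gcd(b,ra)=1$. Each surviving case carries genuine solutions $(0,0)$, $(x_2,0)$, $(x_3,y_3)$ to (\ref{1}) for every representation $B=b^{y_3}$; these are precisely the members of the family (\ref{67}).

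For each case on this finite list, and each representation $B=b^{y_3}$ with $b\ge 2$ (only finitely many, since $b\ge 2$ forces $y_3\le\log_2 B$), I would rule out a fourth solution by bootstrapping from the known pair $(x_3,y_3)$: applying (\ref{2}) with $(i,j)=(3,4)$ and factoring $ra^{x_3}$ and $sb^{y_3}$ --- both harmless here, since $ra^{x_3}$ is small and $B=b^{y_3}\le 4Z+3<10^{16}$ --- one produces successively larger integers dividing $x_4-x_3$ and $y_4-y_3$, and in every case one of these exceeds $8\cdot 10^{14}$, contradicting $x_4,y_4<8\cdot 10^{14}$. In the few cases where bootstrapping stalls, one instead invokes the LLL bound (\ref{25}) or computes $y_4$ from (\ref{solvey4eqn}) and checks that it is non-integral, exactly as in the three elimination methods of Section~2.

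Finally I would dispose of $a=2$ and $a=3$, for which (\ref{H}) need not hold. Here $r$ is forced ($r=2$ when $a=2$, $r=1$ when $a=3$), and $s=r(a^{x_2}+(-1)^\alpha)/2\le Z+1$ still bounds $x_2$, while $a^{x_3-x_2}\le Z$ still bounds $x_3-x_2$ and hence $x_3$; requiring $B$ to be an integer trims the list sharply, and the remaining cases are eliminated by bootstrapping as before. The obstacle here is not conceptual --- the proof is simply a finite search --- but organizational: the work lies in making the enumeration exhaustive (all sign choices $\alpha,\beta$ and all base-power representations of $B$) while keeping it small enough to actually run, which is what the hypothesis $a\le 134000$ secures, larger $a$ being handled separately.
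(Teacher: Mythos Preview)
Your plan is essentially the paper's own argument: bound $x_3-x_2$ via Lemma~\ref{ZBoundLemma}, bound $x_2$ (the paper does this directly from $s\le Z+1$ rather than through (\ref{H}), but your route is equivalent for $a>3$), enumerate all $(a,r,\alpha,\beta,x_2,x_3)$ and use (\ref{J}) to produce $b^{y_3}$, then kill a fourth solution by LLL or bootstrapping. One caveat the paper records and you should anticipate: for a handful of cases with $a=2$ and $b=(2^x+1)/3$ the three standard elimination methods all fail, and an ad hoc congruence argument modulo~8 is required to finish.
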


\begin{proof}
Suppose (\ref{20b}) holds and $a \le 134000$.      

We first get bounds on $x_2$ and $x_3-x_2$.  Lemma~\ref{ZBoundLemma} gives $a^{x_3-x_2} \le Z$ so $x_3-x_2 <\log( 8 \cdot 10^{14}) / \log(a)$.  By Lemma~\ref{ZBoundLemma}, $s \le Z+1$ so $a^{x_2} \le 2(Z+1)+1 < 16 \cdot 10^{14}+3$, hence $x_2 < \log(16 \cdot 10^{14}+3) / \log(a)$.  
If $a \ge 5$ then $x_2 \mid x_3-x_2 < \log(8 \cdot 10^{14})/ \log(a)$.  

Given $a$, $x_2$, $x_3 - x_2$, and choosing $\alpha$, $\beta \in \{0,1\}$, and $r \in \{1,2\}$ with $2 \mid a-r$, we have (\ref{J}).  
Suppose for some choice of $a$, $\alpha$, $\beta$, $x_2$, and $x_3-x_2$, we find that ${ 2 ( a^{x_3}  +(-1)^{\alpha+\beta}) \over a^{x_2} + (-1)^\alpha }  - (-1)^\beta$ is an integer; it is $b^{y_3}$, so we know $b$ and its associated $y_3$ (without loss of generality we may assume $b$ is not a perfect power).  We now have three solutions to (\ref{1}) with this $a$, $b$, $c$, $r$, $s$ and choice of signs.  We use LLL (then bootstrapping if LLL fails) to show no fourth solutions exist.   A few cases with $a=2$ and $b= (2^{x} + 1)/3$ require a further elementary argument modulo 8 to eliminate a fourth solution.  
\end{proof}

\begin{Lemma} 
\label{Lemma16}
No instance of (\ref{20b}) has four or more solutions.  
\end{Lemma}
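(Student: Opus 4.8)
The plan is to combine the structural facts derived in the preamble to Case~(\ref{20b}) with the small-$a$ result of Lemma~\ref{Small20Lemma}, so that only $a>134000$ remains, and then to show that such a large $a$ forces a contradiction. First I would invoke Lemma~\ref{Small20Lemma} to reduce to $a>134000$ (in particular $a>3$, so (\ref{H}) applies and $x_2\mid x_i$ for all $i\ge3$). Next I would apply Lemma~\ref{ZBoundLemma} to the four solutions: with $Z=\max(x_4,y_1,y_2,y_3,y_4)=\max(x_4,y_3,y_4)$ we get $a^{x_3-x_2}\le Z$ and $s\le Z+1$. Since $r\in\{1,2\}$ and $s=r(a^{x_2}+(-1)^\alpha)/2$ by (\ref{J}), the bound $s\le Z+1<8\cdot10^{14}+2$ forces $x_2$ to be small — in fact, combined with $a>134000$, it forces $x_2\le 2$ (and $x_2=2$ only for a narrow range of $a$).

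The heart of the argument is then a case analysis on $x_2$, handled much as in Lemma~\ref{Case63Lemma} and the $d=1,k=3$ part of Lemma~\ref{Case62Lemma}. In each case, (\ref{J}) pins down $b^{y_3}$ as an explicit low-degree rational function of $a$ (for $x_2=1$ one gets $b^{y_3}=2a^{x_3-x_2}\mp\cdots$, a near-power of $a$). Using (\ref{H}) we know $x_2\mid x_4$, so $x_4=x_2 m$ for some integer $m$; writing the congruence $ra^{x_4}+c\equiv ra^{x_3}+c\equiv0\pmod{sb^{y_3}}$ (valid since both $(x_3,y_3)$ and $(x_4,y_4)$ are solutions) and expanding $a^{x_4}=a^{x_2 m}$ binomially in powers of, say, $a^{x_2}+(-1)^\alpha$ (which divides $s$), I would extract successive congruences forcing $x_4$ to be divisible by a large power of $a$ (roughly $a^{x_2}$ or $a^{2x_2}$), exactly as in the $(2-j)a^3$ computation in Lemma~\ref{Case63Lemma}. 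This makes $x_4$ exceed $8\cdot10^{14}$ unless a small residual value of $x_4$ remains, and for that residual value I would plug into (\ref{solvey4eqn}) and check that $y_4$ is not an integer to many places of accuracy — or, where cleaner, derive an outright contradiction modulo a small power of $b$ or modulo $8$ (the latter being needed, as in Lemma~\ref{Small20Lemma}, for the $a=2$-flavoured subcases that survive as $b=(2^x+1)/3$-type families, though here $a>134000$ so these may not arise).

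The main obstacle I anticipate is the bootstrapping/exponent-lifting step when $x_2=2$: here the modulus $sb^{y_3}$ is only divisible by $(a^2+(-1)^\alpha)$ to the first power rather than by a full power of $a$, so one must be more careful about which $2$-adic or $p$-adic valuations are gained at each lifting step (the parenthetical remarks in Lemma~\ref{NewLemma6} and Lemma~\ref{Case63Lemma} about $2^n\|a+2$ signal exactly this subtlety). Secondarily, one has to make sure the finitely many residual $(a,b,c,r,s;x_4)$ candidates left after the lifting argument are genuinely eliminated by (\ref{solvey4eqn}); since $a$ ranges over an interval rather than a finite explicit list, this is the one place where an honest (small) computation over a bounded range of $a$ seems unavoidable, just as in the $6\cdot10^5<b\le 2\cdot10^7$ sweep inside Lemma~\ref{Case62Lemma}. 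Everything else is routine once the divisibility $x_2\mid x_4$ from (\ref{H}) and the size bounds from Lemma~\ref{ZBoundLemma} are in hand.
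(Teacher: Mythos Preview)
Your overall shape is right, but you are missing the key structural step that makes the lifting argument go through cleanly, and the direction of your expansion is the wrong one.

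The paper's proof, after invoking Lemma~\ref{Small20Lemma} to get $a>134000$, does two reformulations you do not. First, since (\ref{H}) gives $x_2\mid x_i$ for all $i\ge 3$, one may replace $a$ by $a^{x_2}$ and assume $x_2=1$ outright; there is no separate $x_2=2$ case to worry about. Second, and more importantly, the paper applies Lemma~\ref{NewLemma6} to the triple $(x_2,y_2),(x_3,y_3),(x_4,y_4)$ to obtain $y_3\mid y_4$, and then replaces $b$ by $b^{y_3}$ to assume $y_3=1$. This is the step you are missing. Once $y_3=1$, (\ref{J}) makes $b$ itself an explicit low-degree polynomial in $a$ (namely $b=2a\mp 1$ when $x_3-x_2=1$, or $b=2a^2\mp 2a+1$ when $x_3-x_2=2$), and the paper then expands $b^{y_4}$ binomially modulo $a^{100}$ (using $x_4>100$) to force $y_4$ to be of size at least $a^3/3>8\cdot 10^{14}$. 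The argument is purely algebraic and requires no residual sweep over a range of $a$ via (\ref{solvey4eqn}).

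Your plan instead expands $a^{x_4}$ modulo $sb^{y_3}$ to force $x_4$ large. The difficulty is that without $y_3=1$ you only know $b^{y_3}$, not $b$, as a polynomial in $a$; and the multiplicative order of $a$ modulo $b^{y_3}$ (e.g.\ modulo $2a\pm 1$) is not a priori a large power of $a$, so the successive-congruence lifting you describe does not obviously produce the needed $a^3$ factor in $x_4-x_3$. Expanding $a^{x_2 m}$ in powers of $a^{x_2}+(-1)^\alpha$ only gives control modulo powers of $s$, which by itself is far too small. In short, the missing idea is Lemma~\ref{NewLemma6} $\Rightarrow$ $y_3\mid y_4$ $\Rightarrow$ reduce to $y_3=1$ $\Rightarrow$ expand $b^{y_4}$ (not $a^{x_4}$) modulo high powers of $a$.
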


\begin{proof}
Suppose (\ref{20b}) holds with $N > 3$.  
By Lemma~\ref{Small20Lemma} we may assume $a>134000$ so that (\ref{H}) holds.  From (\ref{J}) we have $r=1$ if $a$ is odd, $r=2$ if $a$ is even.    
If $s \le 2$, then, considering the solution $(x_2, y_2)$,   we must have $a \le 5$, contradicting Lemma~\ref{Small20Lemma}.  So we can assume $s>2$.  We can also assume $r a^{x_2} > 2$.  And we certainly can assume that the solutions $(x_1, y_1)$, \dots, $(x_N, y_N)$ include all solutions to (\ref{1}) for $(a,b,c,r,s)$.  Now we can apply Lemma~\ref{NewLemma6} to the solutions $(x_2, y_2)$, $(x_3, y_3)$, $(x_4, y_4)$ to get 
\begin{equation}   y_3 \mid y_4.  \label{F} \end{equation}
Since we have $a>134000$, we see that $a^{x_3-x_2} > 8 \cdot 10^{14} $ when $x_3-x_2 \ge 3$, hence, by Lemma~\ref{ZBoundLemma}, we have $x_3-x_2 \le 2$.  
By (\ref{H}) we see that, in considering the set of solutions $(a,b,c,r,s; 0,0,x_2, 0, x_3, y_3, x_4, y_4)$, we can assume $x_2 = 1$ without loss of generality, noting that we are reformulating $a$, $x_3$ and $x_4$.  Now one can see that the only possibilities satisfying (\ref{J}) have $x_2 = 1$ with 

\begin{tabular}{c  c  c  c}
$x_3 - x_2$ & $\alpha$ & $\beta$ \\
2 & 0 & 0 \\
2 & 1 & 0 \\
1 & 0 & 1 \\
1 & 1 & 0 
\end{tabular}

\noindent
Note that $c \le r + s < 8 \cdot 10^{14}+2$ by Lemma~\ref{SummaryLemma}, and $\pm c = r a^{x_4} - s b^{y_4}$.  By Lemma~\ref{ZBoundLemma}, $134000< a < \max(x_4, y_4)$, so certainly $100 < x_4$.  Considering $\pm c = r a^{x_4} - s b^{y_4}$ modulo $a^{100}$, we have $s b^{y_4} \pm c \congruent 0 \bmod a^{100}$.  We will expand $y_4$ in powers of $a$; we have $y_4 < 8 \cdot 10^{14} < 134000^3 < a^3$, so we do not need powers of $a$ higher than two.  By (\ref{F}), we can assume $y_3 = 1$ without loss of generality, noting that in doing so we have reformulated $b$ and $y_4$.  Thus we are considering the following set of solutions: $(a,b,c,r,s; 0,0,1,0,x_3, 1, x_4, y_4)$, noting the reformulations of variables as indicated above.  

\noindent {\it Case 1:} 
Suppose $x_2=1$ and $x_3-x_2=2$.  Then $\beta=0$.  Using (\ref{J}) and taking the upper sign when $\alpha = 0$ and the lower sign when $\alpha = 1$, we have $b = 2 a^2 \mp 2 a + 1$ and, for either choice of the parity of $a$,
$$ (a \mp 1) + (a \pm 1) b^{y_4} \congruent (a \mp 1) +(a \pm 1) b \congruent 0 \bmod 2a^3 $$
so that $(a \pm 1) (b^{y_4 - 1} -1)  \congruent 0 \bmod 2a^3$, so that, for either choice of the parity of $a$, $b^{y_4-1} \congruent 1 \bmod 2a^3$ which requires $a^2 \mid y_4-1$, by Lemma 1 of \cite{Sc-St3}.  So, letting $y_4 = 1 + j a^2$ for some positive integer $j$, and letting $M_i$ be an integer for $1 \le i \le 3$, 
\begin{align}
&
\frac{a \mp 1}{2} +  \frac{a \pm 1}{2} (2 a^2 \mp 2a + 1)^{y_4}  \notag \\
&= \frac{a \mp 1}{2} +  \frac{a \pm 1}{2} \Bigg( 1 + y_4 (2 a^2 \mp 2a) + {y_4 (y_4 -1) \over 2} (2 a^2 \mp 2a)^2 + {y_4 (y_4 -1) (y_4 - 2) \over 6} (2 a^2 \mp 2a)^3  \notag \\
 & + {y_4 (y_4 -1) (y_4 - 2) (y_4 - 3) \over 24} (2 a^2 \mp 2a)^4 + 2 M_1 a^5 \Bigg) =   M_2 a^{100}
\label{234}
\end{align} 
since $x_4 > 100$.  
This simplifies to 
\begin{equation} (1 - j)a^3 \pm j a^4 + \frac{M_3}{3} a^5 = M_2 a^{100}.  \label{J_5} \end{equation} 
From this we see that $j = 1 + wa/3$ for some integer $w \ge 0$.  If $w=0$ then (\ref{J_5}) becomes impossible.  And if $w>0$, then $y_4 > \frac{a}{3} a^2 > 8 \cdot 10^{14}$ when $a>134000$, giving a contradiction to Lemma~\ref{SummaryLemma}.

\noindent {\it Case 2:}
Suppose $x_2 = x_3 - x_2=1$, so that $\alpha \ne \beta$ and $b = 2 a - (-1)^\alpha$.  Considering the solution $(x_3, y_3)=(2,1)$, 
we have, for either choice of the parity of $a$, 
\begin{equation}  (a-(-1)^\alpha) - (-1)^\alpha (a+(-1)^\alpha) b \congruent 0 \bmod 2 a^2.  \label{MS1} \end{equation} 
Let $t=1$ if $y_4$ is odd and $\alpha = 0$, otherwise let $t=0$.  Then considering the solution $(x_4, y_4)$ we have 
\begin{equation}  (a - (-1)^\alpha ) + (-1)^t (a + (-1)^\alpha) b^{y_4} \congruent 0 \bmod 2 a^{100} \label{MS2} \end{equation} 
since $x_4 > 100$. Combining (\ref{MS1}) and (\ref{MS2}) we have
$(a+(-1)^\alpha) (b^{y_4-1} + (-1)^{t+\alpha}) \congruent 0 \bmod 2 a^2$, which requires $a \mid y_4-1$ for either choice of the parity of $a$, by Lemma 1 of \cite{Sc-St3}.  
So we can write $y_4 = 1 + e_1 a + e_2 a^2$ where $0 \le e_1 < a$.  

Assume first $\alpha = 0$.  Let $M_1$ and $M_2$ be integers.  Then using (\ref{MS2}) we have 
\begin{equation}   \frac{a-1}{2} + \frac{a+1}{2} \left( 1 - y_4 (2a) + \frac{y_4(y_4-1)}{2} (2a)^2 - \frac{y_4(y_4-1)(y_4-2)}{6} (2a)^3 + 2 M_1 a^4 \right) = M_2 a^{100}. \label{TH} \end{equation} 
This simplifies to 
$$ (-1 -e_1) a^2 = M_3 a^3 $$
for some integer $M_3$.  Thus $e_1 = a-1$.  So $y_4 = 1 + (a-1)a + e_2 a^2$.  Now from (\ref{TH}) we obtain
$$ (-1-e_2) a^3 = \frac{M_4}{3} a^4 $$
for some integer $M_4$, so that $e_2 = ({wa}/{3}) - 1$ for some integer $w \ge 1$.   But then $y_4 \ge 1 + (a-1) a + \left( \frac{a}{3} -1 \right) a^2   > 8 \cdot 10^{14}$ when $a> 134000$, violating Lemma~\ref{SummaryLemma}.  

So we can assume $\alpha = 1$.   We define $M_i$, $1 \le i \le 6$ to be integers.   From (\ref{MS2}) we derive 
$$ \frac{a+1}{2} + \frac{a-1}{2} \left( 1 + y_4 (2a) + \frac{y_4(y_4-1)}{2} (2a)^2 + 2 M_1 a^3 \right) = M_2 a^{100}.  $$
This simplifies to 
$$ (1-e_1) a^2 = M_3 a^3. $$
Thus, $e_1 = 1$.  Write $y_4 = 1 + a + e_2 a^2$.  Then
\begin{align*}
 \frac{a+1}{2} + \frac{a-1}{2} \Bigg( 1 + y_4 (2a) + \frac{y_4(y_4-1)}{2} (2a)^2 + \frac{y_4(y_4-1)(y_4-2)}{6} (2a)^3 & \notag \\
+ \frac{y_4(y_4-1)(y_4-2)(y_4-3)}{24} (2a)^4 + 2 M_4 a^5 \Bigg) & = M_5 a^{100}. \\
\end{align*}
This simplifies to 
\begin{equation}  -e_2 a^3 + \frac{2}{3} a^4 = \frac{M_6}{3} a^5 \label{MS3}  \end{equation}    
so that $e_2 = {wa}/{3}$ for some integer $w \ge 0$.  If $w=0$ then (\ref{MS3}) becomes impossible.  
And if $w>0$, $y_4 \ge 1+a +\frac{a}{3} a^2 > 8 \cdot 10^{14}$ since $a> 134000$, violating Lemma~\ref{SummaryLemma}.  

This completes the proof of Lemma~\ref{Lemma16}.  
\end{proof}

Comment on Theorem 2 of \cite{Sc-St4}: If (\ref{401}) holds, then, by Lemma~\ref{x2y1BoundLemma}, we can assume without loss of generality that $y_1 \le y_3-y_1$.  If $N > 3$, applying Lemma~\ref{ZBoundLemma} we get  
\begin{equation} c \le r + s b^{y_1} \le r + s b^{y_3-y_1} \le (Z+1) + (Z+1) Z = (Z+1)^2. \label{33new} \end{equation}
It follows from the proof of Theorem 2 of \cite{Sc-St4} (Case 2, equation (76a)), that we must have 
\begin{equation} Z < 0.9 + \frac{\log(c) }{\log(2)} + 1.6901816335 \cdot 10^{10} \cdot \log(Z+1) \log(Z) \log(1.5 e Z). \label{34new} \end{equation}
Using (\ref{33new}) in (\ref{34new}) we get $Z < 7.07 \cdot 10^{14}$, completing a simpler and shorter proof of Theorem 2 of \cite{Sc-St4}.

\end{document}